\documentclass[10pt,reqno]{article}

\usepackage{amsfonts,amssymb,stmaryrd,amsthm}
\usepackage{amsmath,palatino}
\usepackage{graphicx}
\usepackage{hyperref}
\usepackage{yfonts,mathrsfs}
\usepackage{pb-diagram}

\newtheorem{theorem}{Theorem}[section]
\newtheorem{lemma}[theorem]{Lemma}
\newtheorem{proposition}[theorem]{Proposition}

\newtheorem{corollary}[theorem]{Corollary}
\newtheorem{remark}[theorem]{Remark}
 \DeclareMathOperator{\dist}{dist} 
 \DeclareMathOperator{\interior}{int}
 \DeclareMathOperator{\Id}{Id}
 \DeclareMathOperator{\Sp}{Sp}
 \DeclareMathOperator{\clos}{cl}

 \newcommand{\R}{{ \mathbb{R}}}
 \newcommand{\N}{{ \mathbb{N}}}
 \newcommand{\Z}{{ \mathbb{Z}}}

\newcommand{\sC}{{\mathscr{C}}}
\newcommand{\sD}{{\mathscr{D}}}
\newcommand{\sU}{{\mathscr{U}}}
\newcommand{\sJ}{{\mathscr{J}}}
\newcommand{\sG}{{\mathscr{G}}}

\newcommand{\sX}{{\mathscr{X}}}
\newcommand{\sW}{{\mathscr{W}}}
\newcommand{\sV}{{\mathscr{V}}}

\newcommand{\eps}{{\varepsilon}}
\newcommand{\wsigma}{\widetilde{\sigma}}

\numberwithin{equation}{section}

\linespread{1.1}

\newcounter{listasscnt}\renewcommand{\thelistasscnt}{A\arabic{listasscnt}}
\newenvironment{listass}{\begin{list}{(\thelistasscnt)}{\usecounter{listasscnt}}}{\end{list}}

\begin{document} 
\begin{sloppypar}

\title{The Poincar\'{e}-Bendixson Theorem and the Non-linear Cauchy-Riemann Equations}

\author{J.B. van den Berg, S. Muna\`{o}, R.C.A.M. Vandervorst\footnote{Department of Mathematics, VU University
Amsterdam, The Netherlands.}}

\maketitle

 \begin{abstract}
In \cite{fiedlermallet}   Fiedler and Mallet-Paret  prove a version of the classical Poincar\'{e}-Bendixson Theorem  for   scalar parabolic equations.     We prove that a similar result holds for bounded solutions of the non-linear Cauchy-Riemann equations.
The latter is an application of an abstract theorem for flows with a(n) (unbounded) discrete Lyapunov function.
\end{abstract}

\section{Introduction}
The classical Poincar\'{e}-Bendixson Theorem describes the asymptotic behavior of flows in the plane. 
The topology of the plane puts severe restrictions on the behavior of limit sets.
The Poincar\'{e}-Bendixson Theorem states for example that if the $\alpha$- and the $\omega$-limit set of a  bounded trajectory of a smooth flow in $\R^2$    does not contain  equilibria, then the limit set is a periodic orbit. Several generalizations of   this theorem have appeared in the literature. For instance the generalization of the  Poincar\'e-Bendixson Theorem  to two-dimensional manifolds, cf.\ \cite{abz}.
In \cite{hajek}  an extension   to continuous (two-dimensional) flows is obtained and 
  \cite{ciesielski} provides a generalization to semi-flows. 
The remarkable result by Fiedler and Mallet-Paret  \cite{fiedlermallet} establishes
an extension of the Poincar\'{e}-Bendixson Theorem to  infinite dimensional dynamical systems
with a positive Lyapunov function. They apply their result
to   \emph{scalar} parabolic equations of the form
\begin{equation}\label{rd}
u_s=u_{xx}+f(x,u,u_x),\quad x\in S^1, f\in C^2.
\end{equation}
In this paper we establish  a version of the Poincar\'{e}-Bendixson Theorem for 
bounded orbits of the nonlinear Cauchy-Riemann equations in the plane.  A bounded orbit of the nonlinear Cauchy-Riemann equations
 is a (smooth) bounded function $u\colon \R\times S^1\to \R^2$, which satisfies 
\begin{equation}
\label{nonlinearCR}
u_s-J\bigl(u_t-F(t,u)\bigr)=0, 
\end{equation}
with $u(s,t)=\bigl(p(s,t),q(s,t)\bigr)$, $s\in \R$, $t\in S^1=\R/\Z.$ Here $F(t,u)$ is a smooth non-autonomous  vector field on $\R^2$ and
 $J$ is the  symplectic matrix 
 $$
 J=\left(
 \begin{array}{cc}
 0 & -1\\
 1 & 0
 \end{array}
 \right).
 $$
 We prove that the asymptotic behavior, as $s$ goes to infinity, of bounded  solutions of Equation (\ref{nonlinearCR}) is as simple
 as the limiting behavior of flows in $\R^2.$ 
 Equation (\ref{nonlinearCR}) arises in many different contexts, in particular in Floer homology literature,
where the vector field has the form $F(t,u)=F_{H}(t,u),$ i.e. $F_H$ is \emph{Hamiltonian}, cf.\  \cite{mcduffsalamon}.
The latter implies that  there exists a time-dependent Hamiltonian function $H(t,\cdot):\R^2\to \R$, such that $F_H(t,u)=J\nabla H(t,u)$.
In the Hamiltonian case the Cauchy-Riemann equations   are the $L^2$-gradient flow of the Hamilton action
and as such
 bounded solutions of (\ref{nonlinearCR}) will, generically,  be   connections orbits between  equilibria.
The Hamilton action is an $\R$-valued Lyapunov function for the Cauchy-Riemann equations.  
In this paper we obtain a result about the asymptotic behavior of orbits for \emph{general} vector fields $F$ in the Cauchy-Riemann equations.

A bounded solution of the Cauchy-Riemann equations is a  smooth function $u$ with $|u(s,t)|\le C$.
Let $X$ be the set of solutions bounded  
 by a fixed (but arbitrary) constant (in the present work we will always choose $C=1$). Endowed with the compact-open topology $X$ is a compact Hausdorff space. The translation invariance of the Cauchy-Riemann equations in the $s-$variable
defines an induced flow 
on $X$ by translating solutions in the $s$-variable.
A bounded solution $u$ can be identified with its orbit $\gamma(u),$ and $\alpha(u)$ and $\omega(u)$ are well-defined elements of $X$.
In Section\ \ref{sectionmainresult} we given a detailed account of the space $X$ and the induced translation flow 
in the context of the Cauchy-Riemann equations.

\begin{theorem}\label{poincare-bendixson1}
Let $u$ be a bounded solution of the   Cauchy-Riemann Equations (\ref{nonlinearCR}).  
Then, for the $\omega$-limit set $\omega(u)$  the following dichotomy holds:
\begin{enumerate}
\item[(i)] either $\omega(u)$ consists of exactly one $s$-periodic orbit, or
\item[(ii)] $\alpha(v)\subseteq E$ and $\omega(v)\subseteq E$, for every $v\in \omega(u)$,
\end{enumerate}
where $E$ denotes the set of equilibria of Equation \eqref{nonlinearCR}, i.e. the 1-periodic solutions of the vector field $F(t,x)$.
The same dichotomy holds for the $\alpha$-limit set $\alpha(u)$.
\end{theorem}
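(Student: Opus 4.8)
The plan is to follow the Fiedler--Mallet-Paret strategy by exhibiting a discrete Lyapunov function for the induced translation flow on $X$ and then invoking the abstract Poincar\'{e}-Bendixson theorem promised in the abstract. First I would construct, for a bounded solution $u$, the zero-counting functional: for fixed $s$, the map $t\mapsto u(s,t)$ is a loop in $\R^2$, and comparing two solutions $u_1,u_2$ the difference $w=u_1-u_2$ satisfies a linear Cauchy-Riemann-type system $w_s-J w_t = A(s,t)w$ for a suitable matrix-valued coefficient $A$ obtained from $F$ along the segment joining $u_1$ and $u_2$. For such first-order elliptic systems in two variables the winding number of $t\mapsto w(s,t)$ around the origin (equivalently, the zero count of a scalar reduction) is nonincreasing in $s$, and strictly decreases whenever $w(s_0,\cdot)$ has a degenerate zero; this is the analogue of the dropping lemma for the parabolic equation \eqref{rd}. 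The unboundedness of this Lyapunov function (it can be $+\infty$) is exactly the situation the abstract theorem is designed to handle.

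Next I would verify the structural hypotheses of the abstract theorem on the compact flow $(X,\text{translation})$: (a) the Lyapunov function $N\colon X\times X\to \N\cup\{\infty\}$ is well-defined, upper semicontinuous, and nonincreasing along orbits, with the strict-drop property at nontransverse intersections; (b) equilibria of the flow are precisely elements of $E$, i.e. $s$-independent solutions, which by \eqref{nonlinearCR} are exactly the 1-periodic orbits of $F(t,\cdot)$; (c) the necessary compactness and smoothing — bounded solutions of \eqref{nonlinearCR} enjoy interior elliptic estimates, so $X$ is compact in the compact-open topology and the flow is continuous, and limit sets are nonempty, compact, connected, and flow-invariant. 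With these in hand, the abstract Poincar\'{e}-Bendixson theorem yields that $\omega(u)$ is either a single periodic orbit or is contained in the set of points whose full orbit has $\alpha$- and $\omega$-limits inside $E$; unwinding this gives precisely the stated dichotomy, and the identical argument with $s\mapsto -s$ handles $\alpha(u)$.

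The main obstacle I anticipate is establishing the discrete Lyapunov (zero-number / winding-number) property rigorously for the \emph{nonlinear} Cauchy-Riemann system: one must show that the winding number of the difference of two solutions is finite, nonincreasing, and drops at degenerate zeros, which requires a careful local analysis of zeros of solutions to the linearized system $w_s - Jw_t = A(s,t)w$. Unlike the scalar parabolic case, here $w$ is $\R^2$-valued, so "zeros" means common zeros of both components; the right framework is to treat $w$ as a solution of a perturbed Cauchy-Riemann operator and use similarity principle / unique continuation arguments (Carleman estimates or the Hartman-Wintner-type local representation $w(z) = z^k(\xi + o(1))$ near an isolated zero) to control the local winding. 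A secondary technical point is handling solutions that are not everywhere distinct: if $u_1$ and $u_2$ agree on an open set they agree everywhere by unique continuation, so on $\omega(u)\times\omega(u)$ minus the diagonal $N$ is finite, which is what feeds into the abstract machinery. Once the dropping lemma is secured, the remainder is a faithful transcription of the Fiedler--Mallet-Paret and Matano-type arguments into the elliptic setting.
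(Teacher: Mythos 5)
Your overall strategy matches the paper's: establish compactness of the solution space $X$ under the translation flow, equip pairs of solutions with a discrete Lyapunov function coming from the winding number of the difference $w=u^1-u^2$ (with the dropping property at degenerate zeros supplied by the similarity principle and a Hartman--Wintner-type local representation), and then feed this into an abstract Poincar\'e--Bendixson theorem. The technical concerns you flag (unique continuation, local analysis of zeros of solutions of $w_s-Jw_t=A(s,t)w$) are exactly the ingredients the paper imports as Propositions \ref{thinness} and \ref{prop:lyap} and as the injectivity argument in Proposition \ref{prop:aron}.

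However, there is a genuine gap in your list of hypotheses to verify. The abstract theorem does not follow from compactness plus a discrete Lyapunov function alone: the essential extra structure is a continuous projection $\pi_{t_0}\colon X\to\R^2$, $u\mapsto u(0,t_0)$, together with the axiom that $\pi_{t_0}(u^1)=\pi_{t_0}(u^2)$ forces $(u^1,u^2)$ into the crossing set $\Sigma$ (Axioms (\ref{Wandpi})--(\ref{piandsigmageneral})). Combined with the dropping property this makes $\pi_{t_0}$ injective on $\clos(\gamma(w))$ for $w\in\omega(u)$, so the limit dynamics becomes a genuine planar flow to which sections, flow boxes and Jordan-curve arguments apply; this is the mechanism producing the periodic-orbit alternative, and your proposal never identifies it. A second, smaller but substantive, misstatement: the Lyapunov function here is not an $\N\cup\{\infty\}$-valued upper semicontinuous zero count but a $\Z$-valued winding number, defined and continuous only off the closed singular set $\Sigma$, and the new difficulty relative to Fiedler--Mallet-Paret is that it is \emph{unbounded from below} (not that it can be $+\infty$). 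Nonnegativity is what lets Fiedler--Mallet-Paret bound the number of drops a priori; here that argument is unavailable, and the paper instead derives contradictions by showing that an infinite sequence of drops along the orbit of $u$ would force $W\to-\infty$ while continuity at a limit point in $\omega(u)$ keeps it finite. Without addressing both points, the reduction to the abstract theorem is not complete.
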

As in the classical Poincar\'{e}-Bendixson Theorem alternative (ii) allows for $\omega(u)$ (or $\alpha(u)$)  to consist of homoclinic and heteroclinic solutions joining equilibria.
An important reason why a generalization of the Poincar\'{e}-Bendixson holds for the Cauchy-Riemann equations is that there exists a continuous projection onto $\R^2$, which is defined as follows.  
Let $t_0\in S^1$ be arbitrary, then define
\begin{equation}\label{pi}
\begin{array}{lccl}
\pi_{t_0}:&X&\to& \R^2\\
        &u=(p,q)&\mapsto& \pi_{t_0}(u)=\bigl(p(0,t_0),q(0,t_0)\bigr).
\end{array}
\end{equation}

\begin{theorem}\label{teorema2}
Under the assumptions of Theorem \ref{poincare-bendixson1} the projection
$$
\pi_{t_0} \colon \omega(u)\to\pi_{t_0}\omega(u)
$$
is a homeomorphism onto its image.
\end{theorem}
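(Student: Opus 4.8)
The plan is to reduce Theorem~\ref{teorema2} to an injectivity statement and then to run the standard discrete–Lyapunov–function argument, the Lyapunov function here being the winding number of the difference of two solutions along the circles $\{s\}\times S^1$.

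\emph{Step 1 (reduction to injectivity).} Since $\omega(u)$ is a closed subset of the compact Hausdorff space $X$ it is compact, and $\pi_{t_0}$ is continuous; a continuous bijection from a compact space onto a Hausdorff space is a homeomorphism, so $\pi_{t_0}\colon\omega(u)\to\pi_{t_0}\omega(u)$ is a homeomorphism as soon as it is injective. Suppose it is not: there are $v,w\in\omega(u)$ with $v\neq w$ but $v(0,t_0)=w(0,t_0)$. Set $W:=v-w\colon\R\times S^1\to\R^2$. Then $W\not\equiv 0$ while $W(0,t_0)=0$, and I will derive a contradiction.

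\emph{Step 2 (the difference has isolated zeros and the winding number drops).} Subtracting the two copies of \eqref{nonlinearCR} satisfied by $v$ and $w$ and writing $F(t,v)-F(t,w)=a(s,t)W$ (mean–value form, $a$ smooth and bounded because $v,w$ are bounded), one finds that $W$ solves the linear equation $W_s-J\bigl(W_t-a(s,t)W\bigr)=0$, which has the same principal part as \eqref{nonlinearCR}, i.e.\ is of Cauchy--Riemann type with bounded zeroth order term. By the Carleman similarity principle (to be recorded when the discrete Lyapunov function is set up), a nonzero solution of such an equation has only isolated zeros in $\R\times S^1$, each of finite positive multiplicity; hence the set $\Sigma\subset\R$ of parameters $s$ for which $t\mapsto W(s,t)$ vanishes somewhere is discrete, and for $s\notin\Sigma$ the loop $t\mapsto W(s,t)\in\R^2\setminus\{0\}$ has a well-defined winding number $\nu(s)\in\Z$ about the origin. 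The local complex model shows that $\nu$ is locally constant on $\R\setminus\Sigma$ and \emph{strictly decreasing} across each point of $\Sigma$, the jump equalling the total multiplicity of the zeros on that circle. Since $W(0,t_0)=0$ we get $0\in\Sigma$ and $\nu(0^-)>\nu(0^+)$.

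\emph{Step 3 (constancy on the $\omega$-limit set, and conclusion).} The crucial input is that for any two distinct points $v,w$ of an $\omega$-limit set the function $\nu$ is \emph{constant} on $\R\setminus\Sigma$; equivalently $\Sigma=\emptyset$ and $v-w$ is nowhere zero. This is the usual consequence of the dropping lemma for the discrete Lyapunov function together with the recurrence of $\omega(u)$: choosing $\sigma_k,\tau_k\to+\infty$ with the translates $u(\sigma_k+\cdot\,,\cdot)\to v$ and $u(\tau_k+\cdot\,,\cdot)\to w$ and comparing suitable translates of $u$ (whose pairwise differences again solve equations of the above type) forces the asymptotic winding numbers of $\nu$ at $s=\pm\infty$ to coincide, so $\nu$ cannot drop at all. (The mechanism is most transparent in case~(i) of Theorem~\ref{poincare-bendixson1}: if $\omega(u)$ is a single $s$-periodic orbit of period $T$ and $v,w$ lie on it, then $W=v-w$ is $T$-periodic in $s$, so the monotone function $\nu$ is $T$-periodic, hence constant, hence $W$ has no zero.) This contradicts Step~2. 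Therefore $\pi_{t_0}$ is injective on $\omega(u)$, and by Step~1 it is a homeomorphism onto its image; the same argument applies verbatim to $\alpha(u)$.

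\emph{Main obstacle.} Steps~1 and the bookkeeping in Step~2 are routine; the real content is twofold. First, the Carleman similarity principle for the perturbed Cauchy--Riemann operator, which guarantees that \emph{every} zero of $v-w$ — not only the "degenerate" ones — strictly lowers the winding number; this is precisely the feature that makes the one–point projection $\pi_{t_0}$ sufficient here, in contrast with the scalar parabolic situation of \cite{fiedlermallet}, where one must project onto the $1$-jet $(u,u_x)$. Second, the constancy of this discrete Lyapunov function on $\omega$-limit sets, which rests on the dropping lemma and the recurrence structure of $\omega(u)$ and is the heart of the abstract theorem underlying the paper. Once both are available, Theorem~\ref{teorema2} follows from the three short steps above.
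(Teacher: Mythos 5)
Your Steps 1 and 2 are fine and match the paper: reduction to injectivity via compactness plus the Hausdorff property, and the fact that for two distinct solutions the difference is of perturbed Cauchy--Riemann type, so its zeros are isolated and the winding number is locally constant in $s$ and strictly drops across each zero (this is Propositions \ref{thinness} and \ref{prop:lyap}, i.e.\ Axioms (A3)--(A5)). The genuine gap is in Step 3, which is precisely where the theorem lives. You assert that for \emph{any} two distinct $v,w\in\omega(u)$ the difference is nowhere zero, and justify it by ``dropping plus recurrence'': pick $\sigma_k,\tau_k\to\infty$ with $\phi^{\sigma_k}(u)\to v$, $\phi^{\tau_k}(u)\to w$ and compare translates of $u$. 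But that mechanism only produces, for each $k$, one drop of the map $\sigma\mapsto W\bigl(\phi^{\sigma}(u),\phi^{\sigma+c_k}(u)\bigr)$ with shift $c_k=\tau_k-\sigma_k$; since $c_k$ varies with $k$, these are drops of \emph{different} monotone functions and cannot be chained into a divergence to $-\infty$. The chaining works only when a \emph{fixed} shift can be used, i.e.\ when $v$ and $w$ lie in the closure of one and the same orbit inside $\omega(u)$ (that is Lemma \ref{lem:not-in-sigma}), or in your parenthetical periodic case. For the case the theorem really has to handle --- $v$ and $w$ on two different connecting orbits of $\omega(u)$, or one on a connecting orbit and one an equilibrium --- no fixed shift is available, and because $W$ is $\Z$-valued and \emph{not} bounded below (a point the paper stresses in contrast to \cite{fiedlermallet}) you also cannot argue that there are only finitely many drops in total, so ``the asymptotic winding numbers at $s=\pm\infty$ coincide'' is not forced by anything you have written.

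This is why the paper's proof of the corresponding abstract statement (Proposition \ref{th2fmp}) is not a three-line recurrence argument: it first invokes the full strong Poincar\'e--Bendixson dichotomy (Theorem \ref{pb}, via Propositions \ref{pr2} and \ref{pr3}), disposes of the periodic case with Lemma \ref{lemma3.3mp}, and in the remaining case uses that $\alpha(w^2),\omega(w^2)\subseteq E$ together with Lemma \ref{3.4} (a single constant $k_0=W(w,e)$ for all $w\in\omega(u)$ and all equilibria $e\in\omega(u)$, itself resting on Lemma \ref{lemma:new-not-in-sigma}) to anchor a chain of inequalities through equilibria $e\in\alpha(w^2)$ and $\tilde e\in\omega(w^2)$, which is what replaces the missing fixed-shift recurrence. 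You acknowledge that this constancy is ``the heart of the abstract theorem underlying the paper,'' but your proposal neither proves it nor reduces Theorem \ref{teorema2} to a previously established result; as written, the central claim is assumed, and the sketch offered for it fails exactly in the non-periodic alternative of the dichotomy. To repair the proposal you would need to either import Proposition \ref{th2fmp} wholesale or reproduce its equilibrium-anchored argument.
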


In general, if a flow allows a continuous Lyapunov function, then limit sets of orbits consist only of equilibria. Such flows are referred to as
gradient-like flows.
 Theorem \ref{pb} in this paper gives an abstract extension of the Poincar\'e-Bendixson Theorem
 to flows that allow a \emph{discrete} Lyapunov function. 
 In particular Theorem \ref{pb} implies Theorem \ref{poincare-bendixson1}. 
Note that Theorem \ref{teorema2} together with the classical Poincar\'{e}-Bendixson Theorem also implies Theorem \ref{poincare-bendixson1}.
An abstract version of Theorem \ref{teorema2} is proved in Section \ref{sec:strvers}.
 
 The main differences between the results in \cite{fiedlermallet} for parabolic equations and the results in this paper are that the Cauchy-Riemann equations do not define a well-posed initial value problem and, more importantly, the discrete Lyapunov functions that are considered in this paper are \emph{not} bounded from below.
Furthermore, the results obtained in this paper do not assume  differentiability of the flow, nor does the flow need to be defined on a Banach space.
We believe that most of the results in this paper   extendable to semi-flows, e.g.
 \cite{ciesielski}.
 
 In Section \ref{sectionmainresult} we analyze the main properties of the Cauchy-Riemann equations (\ref{nonlinearCR}) and additional details are given in Section \ref{proofoflemmas}. In Section \ref{abstractsetting} we set up an abstract setting which generalizes the properties of the Cauchy-Riemann equations. In Sections \ref{sectionsv} and \ref{sec:strvers} a full proof of the Poincar\'{e}-Bendixson Theorem is given adapted to the abstract setting introduced in Section \ref{abstractsetting}. 

\section{The  Cauchy-Riemann Equations}
\label{sectionmainresult}

The initial value problem of   Equation (\ref{nonlinearCR}) is  ill-posed.
Given an initial value $u(0,t) = u_0(t)$, there may not exist  solutions $u(s,t)$ of Equation (\ref{nonlinearCR})   for any $s$-time interval $I\ni 0$. We therefore
restrict our attention to
 bounded solutions, which are    
 functions $u\in C^1(\R\times S^1;\R^2)$ that satisfy Equation  (\ref{nonlinearCR})  and for which
\begin{equation}
\label{eqn:ent-sol}
  |u(s,t)| <\infty,\quad \text{for all } (s,t) \in \R\times S^1.
\end{equation}
Since each bounded solution may be considered separately, it suffices to look at the space $X$ of functions $u\in C^1(\R\times S^1;\R^2)$ satisfying  Equation  (\ref{nonlinearCR}) and  
$$
|u(s,t)| \le C,\quad \text{for all } (s,t) \in \R\times S^1,
$$
for some fixed arbitrary constant $C>0.$ Note that, without loss of generality, we can choose $C=1.$
On $X$ we consider the compact-open topology, i.e.  
\begin{equation}\label{compactopen}
u^n\xrightarrow{X} u \quad\iff \quad u^n \xrightarrow{C^0_{\text{loc}}} u,
\end{equation}
where the latter indicates uniform convergence on compact subsets of $S^1\times \R.$ 
Since
 $C^0(\R\times S^1;\R^2)$, endowed with the compact-open topology, is   Hausdorff (see \cite[\S 47]{munkres}),  
 and $X \subset C^0(\R\times S^1;\R^2)$, 
 also  $X$ is  a  Hausdorff   space.
\begin{proposition}
 \label{Xcompact}
The solution space $X$ 
is a compact  Hausdorff space.
\end{proposition}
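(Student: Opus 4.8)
The plan is to show that $X$ is a closed subset of a compact space in the compact-open topology, and hence compact; Hausdorffness has already been observed. The natural ambient space is the set $\mathcal{B}$ of all continuous functions $u\colon\R\times S^1\to\R^2$ with $|u(s,t)|\le 1$ everywhere. First I would establish that $\mathcal{B}$ is compact in the compact-open topology. For this the key input is an a priori gradient bound: a standard elliptic (Schauder/$L^p$) estimate applied to Equation \eqref{nonlinearCR} shows that any solution $u$ bounded by $1$ has all derivatives bounded on $\R\times S^1$ by constants depending only on $C=1$ and on $\sup$-norms of $F$ and its derivatives on the closed unit disk; in particular the family $X$ is equicontinuous and, together with the uniform pointwise bound, lies in a fixed set of the form $\{\,u : |u|\le 1,\ |Du|\le C_1,\ |D^2 u|\le C_2\,\}$. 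Thus it suffices to prove compactness of such an equi-bounded, equicontinuous family, which is exactly the Arzel\`a--Ascoli theorem: on each compact set $K_n=[-n,n]\times S^1$ the restrictions form a relatively compact subset of $C^0(K_n;\R^2)$, and compactness in the compact-open topology on $\R\times S^1$ follows by a diagonal argument (equivalently, $C^0_{\mathrm{loc}}$-convergence is metrizable via $d(u,v)=\sum_n 2^{-n}\min\{1,\|u-v\|_{C^0(K_n)}\}$, and the family is sequentially compact).

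Next I would show $X$ is closed in this compact family. Suppose $u^n\in X$ and $u^n\xrightarrow{C^0_{\mathrm{loc}}}u$. By the uniform derivative bounds above, the sequence $\{u^n\}$ is in fact bounded in $C^2_{\mathrm{loc}}$, so (passing to a subsequence and using Arzel\`a--Ascoli again at the level of first derivatives) $u^n\to u$ in $C^1_{\mathrm{loc}}$; therefore $u\in C^1(\R\times S^1;\R^2)$ and we may pass to the limit in the equation
\[
u^n_s-J\bigl(u^n_t-F(t,u^n)\bigr)=0
\]
to conclude $u_s-J\bigl(u_t-F(t,u)\bigr)=0$. Since also $|u(s,t)|=\lim|u^n(s,t)|\le 1$, we get $u\in X$. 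Hence $X$ is a closed subset of a compact Hausdorff space, so $X$ is itself compact Hausdorff. (One should also note $X\neq\emptyset$ is not needed for compactness, though equilibria of $F$ give constant-in-$s$ examples when they exist.)

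The main obstacle is the a priori $C^1$ (and $C^2$) estimate: unlike the parabolic case there is no well-posed initial value problem, so one cannot propagate regularity forward in time. Instead I would exploit the elliptic nature of \eqref{nonlinearCR} — writing it as a first-order elliptic system, or differentiating to get that $p,q$ satisfy a second-order elliptic equation with bounded coefficients — and apply interior elliptic estimates on balls of fixed radius, using only the local sup-bound $|u|\le 1$; translation invariance in $s$ then makes the resulting constants uniform over all of $\R\times S^1$. This uniform-ellipticity argument is presumably carried out in detail in Section \ref{proofoflemmas}, and I would simply cite it here, the remainder of the proof being the Arzel\`a--Ascoli and closedness steps sketched above.
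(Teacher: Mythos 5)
Your proposal is correct and follows essentially the same route as the paper: interior elliptic estimates for the $\bar{\partial}$-operator yield uniform local higher-order bounds (the paper phrases these as $W^{1,p}$ and $W^{2,p}$ bounds via Lemma \ref{lemmaabb}, cutoffs, the Poincar\'e inequality, and the compact Sobolev embeddings $W^{1,p}\hookrightarrow C^0$ and $W^{2,p}\hookrightarrow C^1$, rather than pointwise $C^1$/$C^2$ bounds plus Arzel\`a--Ascoli), after which one passes to the limit in the equation exactly as you do. One small slip: your opening claim that the full set $\mathcal{B}=\{u\in C^0 : |u|\le 1\}$ is compact in the compact-open topology is false (it is not equicontinuous), but your argument never uses it, since you immediately restrict to the equicontinuous, derivative-bounded family containing $X$ and prove closedness of $X$ under $C^0_{\mathrm{loc}}$ limits there.
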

\begin{proof}
 See Section \ref{proofoflemmas}.
 \end{proof}
 
Identify the translation mapping $(s,t) \mapsto (s+\sigma,t)$ by $\sigma \in \R$ and consider the evaluation mapping
\begin{equation}\label{phi}
\R \times C^0(\R\times S^1;\R^2) \to C^0(\R\times S^1;\R^2) ,\quad (\sigma,u) \mapsto \phi^\sigma(u) = u\circ \sigma.
\end{equation}

\begin{lemma}
\label{lem:evalv1}
The evaluation mapping $(\sigma,u) \mapsto \phi^\sigma(u)$
 is continuous with respect to the compact-open topology on $C^0(\R\times S^1;\R^2)$.
 \end{lemma}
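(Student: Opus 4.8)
The plan is to prove joint continuity straight from the definition. Since $\R\times S^1$ is locally compact Hausdorff and $\R^2$ is metric, the compact-open topology on $C^0(\R\times S^1;\R^2)$ coincides with the topology of uniform convergence on compact sets (cf.\ \cite{munkres}), so a basis of neighborhoods of a point $v_0$ is given by the sets $\{v:\sup_{K}|v-v_0|<\eps\}$ with $K\subset\R\times S^1$ compact and $\eps>0$. Accordingly, fix $(\sigma_0,u_0)$, a compact set $K\subset\R\times S^1$, and $\eps>0$; the goal is to exhibit a neighborhood of $(\sigma_0,u_0)$ in $\R\times C^0(\R\times S^1;\R^2)$ on which
$$
\sup_{(s,t)\in K}\bigl|\phi^\sigma(u)(s,t)-\phi^{\sigma_0}(u_0)(s,t)\bigr|
=\sup_{(s,t)\in K}\bigl|u(s+\sigma,t)-u_0(s+\sigma_0,t)\bigr|<\eps.
$$

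The central device is to enlarge $K$ so that all translated arguments land in one fixed compact set, and to split via the triangle inequality:
$$
\bigl|u(s+\sigma,t)-u_0(s+\sigma_0,t)\bigr|\le\bigl|u(s+\sigma,t)-u_0(s+\sigma,t)\bigr|+\bigl|u_0(s+\sigma,t)-u_0(s+\sigma_0,t)\bigr|.
$$
Let $L\subset\R$ be the (compact) projection of $K$ onto the first factor and set $\tilde K:=\bigl(L+[\sigma_0-1,\sigma_0+1]\bigr)\times S^1$, which is compact. Then for every $\sigma$ with $|\sigma-\sigma_0|\le 1$ and every $(s,t)\in K$ one has $(s+\sigma,t)\in\tilde K$ and $(s+\sigma_0,t)\in\tilde K$. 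Hence, if $u$ belongs to the compact-open neighborhood $\sU:=\{u:\sup_{\tilde K}|u-u_0|<\eps/2\}$ of $u_0$, the first term is $<\eps/2$ uniformly in $(s,t)\in K$ and in $\sigma$ with $|\sigma-\sigma_0|\le1$. For the second term, $u_0$ is continuous on $\R\times S^1$, hence uniformly continuous on the compact set $\tilde K$, so there is $\delta\in(0,1]$ with $|u_0(s',t)-u_0(s'',t)|<\eps/2$ whenever $(s',t),(s'',t)\in\tilde K$ and $|s'-s''|<\delta$; thus $|\sigma-\sigma_0|<\delta$ forces the second term $<\eps/2$. Combining, the neighborhood $(\sigma_0-\delta,\sigma_0+\delta)\times\sU$ of $(\sigma_0,u_0)$ is mapped into $\{v:\sup_K|v-\phi^{\sigma_0}(u_0)|<\eps\}$, which proves continuity.

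I do not expect a serious obstacle here; the two points requiring care are (a) genuinely establishing \emph{joint} continuity rather than continuity in each variable separately — this is exactly why the enlarged compact set $\tilde K$, uniform over the range $|\sigma-\sigma_0|\le1$, must be fixed \emph{before} choosing the neighborhood $\sU$ of $u_0$ and the radius $\delta$ — and (b) invoking the standard identification of the compact-open topology with uniform convergence on compacta, which is justified by local compactness of $\R\times S^1$.
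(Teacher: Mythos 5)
Your proof is correct: the enlargement of $K$ to the compact set $\tilde K=(L+[\sigma_0-1,\sigma_0+1])\times S^1$, fixed before choosing the neighborhood of $u_0$ and the radius $\delta$, together with the triangle-inequality split and uniform continuity of $u_0$ on $\tilde K$, does establish joint continuity, and the identification of the compact-open topology with uniform convergence on compacta is legitimate here because the target $\R^2$ is metric. However, your route differs from the paper's. The paper does not argue by hand: it invokes the standard theorem (Munkres, \S 46) that the composition map $C^0(\R\times S^1;\R\times S^1)\times C^0(\R\times S^1;\R^2)\to C^0(\R\times S^1;\R^2)$ is continuous for the compact-open topologies when the intermediate space is locally compact Hausdorff, and then observes that $\sigma$ acts by a translation which depends continuously on $\sigma$ as an element of $C^0(\R\times S^1;\R\times S^1)$, so that $(\sigma,u)\mapsto u\circ\sigma$ is a composition of continuous maps. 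The structural argument is shorter and places the locally-compact hypothesis where it is actually needed (continuity of composition), while your estimate is more elementary and self-contained and makes explicit the uniform-continuity mechanism that the abstract theorem hides; in effect, the small estimate you perform for $u_0$ on $\tilde K$ is the concrete content of the continuity of $\sigma\mapsto(\text{translation by }\sigma)$ plus the composition theorem. One cosmetic remark: for the identification of compact-open with compact convergence you only need the target to be metric, so citing local compactness of $\R\times S^1$ at that point is superfluous (though harmless).
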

 
\begin{proof}
Since $\R\times S^1$ is a locally compact Hausdorff space, the composition of mappings 
$$ C^0(\R\times S^1;\R\times S^1) \times C^0(\R\times S^1;\R^2) \to C^0(\R\times S^1;\R^2),
$$
is continuous with respect to the compact-open topologies on $C^0(\R\times S^1;\R\times S^1)$ and $C^0(\R\times S^1;\R^2)$,
see \cite[\S 46]{munkres}. The translation $\sigma$ as defined above is a continuous mapping in $C^0(\R\times S^1;\R\times S^1)$,
which proves the lemma.
\end{proof}

Since the Cauchy-Riemann Equations are $s$-translation invariant, $u\in X$ implies that
 $\phi^\sigma(u) \in X.$ We therefore obtain a continuous mapping $\R\times X \to X$, again denoted by $\phi^\sigma(u)$.
Also, 
$$
\phi^\sigma\bigl(\phi^{\sigma'}(u)\bigr) = (u\circ \sigma')\circ \sigma = u\circ (\sigma+\sigma') = \phi^{\sigma+\sigma'}(u),
$$
which shows that $\phi^\sigma$ defines a continuous flow on $X$. A continuous flow on $X$ is a continuous mapping $(\sigma,u) \mapsto \phi^\sigma(u) \in X$, such that $\phi^0(u) = u$ and $\phi^{\sigma+\sigma'}(u) =\phi^\sigma(\phi^{\sigma'}(u))$, for all $\sigma,\sigma' \in \R$ and for all $u\in X$. 

Consider the evaluation mapping $\iota: C^0(\R\times S^1;\R^2) \to C^0(S^1;\R^2)$, defined by
$$
u(\cdot,\cdot) \mapsto u(0,\cdot).
$$
By a similar argument as in Lemma \ref{lem:evalv1} it follows that the mapping $\iota$ is a continuous mapping with respect to the compact-open topology on $C^0(S^1;\R^2).$ 
%

\begin{proposition}
\label{prop:aron}
The mapping $\iota: X \to \sX$, with $\sX = \iota(X)$, is a homeomorphism.
\end{proposition}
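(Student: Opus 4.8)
The plan is to show that $\iota\colon X\to\sX$ is a continuous bijection from a compact space to a Hausdorff space, hence a homeomorphism. Continuity of $\iota$ has already been established. Since $X$ is compact by Proposition \ref{Xcompact} and $\sX=\iota(X)\subseteq C^0(S^1;\R^2)$ is Hausdorff (being a subspace of a Hausdorff space), it suffices to prove that $\iota$ is injective; the standard fact that a continuous bijection from a compact space to a Hausdorff space is a homeomorphism then finishes the argument.

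The core of the proof is therefore injectivity: if $u,v\in X$ satisfy $u(0,\cdot)=v(0,\cdot)$, then $u=v$ on all of $\R\times S^1$. This is where the structure of the Cauchy-Riemann equations enters. Writing the equation \eqref{nonlinearCR} as $u_s = J u_t - JF(t,u)$ (equivalently, splitting into the two real first-order PDEs for $p$ and $q$), the idea is to view $s\mapsto u(s,\cdot)$ as governed by an evolution equation whose ``spatial'' variable is $s$ and for which the Cauchy data along $\{s=0\}$ is prescribed. Although the initial value problem for \eqref{nonlinearCR} is ill-posed in the sense of existence (as emphasized in the text), it is well-posed in the sense of \emph{uniqueness}: two bounded solutions agreeing on the line $s=0$ must coincide. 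The cleanest route is a unique-continuation argument — either invoking Aronszajn-type unique continuation for the elliptic system (the Cauchy-Riemann operator $u\mapsto u_s - Ju_t$ is elliptic, with $F$ a smooth lower-order perturbation), or a direct Gronwall/energy estimate for the difference $w=u-v$, which satisfies a linear equation $w_s = Jw_t - B(s,t)w$ with $B$ bounded (since $F$ is smooth and $u,v$ are uniformly bounded), together with $w(0,\cdot)=0$. I expect the paper defers this to Section \ref{proofoflemmas}, since it is exactly the kind of PDE fact about the Cauchy-Riemann equations that belongs with the proof of Proposition \ref{Xcompact}.

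The main obstacle is precisely this injectivity/unique-continuation step: one must make rigorous that a bounded entire solution is determined by its restriction to a single slice $\{s=0\}$, despite the absence of a forward/backward existence theory. The surjectivity onto $\sX$ is immediate by definition of $\sX=\iota(X)$, and the compact-to-Hausdorff upgrade from continuous bijection to homeomorphism is routine point-set topology (closed sets in $X$ are compact, their images are compact hence closed in $\sX$, so $\iota$ is a closed map). Thus modulo the unique continuation statement — which I would cite from the analysis of \eqref{nonlinearCR} developed in Section \ref{proofoflemmas} — the proof is short.
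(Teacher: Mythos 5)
Your topological reduction is exactly the paper's: continuity of $\iota$, compactness of $X$ (Proposition \ref{Xcompact}), the Hausdorff property of $\sX$, and the standard compact-to-Hausdorff upgrade, so everything hinges on injectivity, and you correctly set up the linearization: $w=u_1-u_2$ satisfies $w_s-Jw_t+B(s,t)w=0$ with $B$ bounded and $w(0,\cdot)=0$. The gap is that you never actually establish uniqueness from this, and of the two routes you sketch, one cannot work: a direct Gronwall/energy estimate in $s$ fails precisely because the $s$-Cauchy problem for this elliptic system is ill-posed. For the model case $B=0$ the functions $w_n(s,t)=e^{ns}(\cos nt,-\sin nt)$ solve $w_s=Jw_t$ and satisfy $\frac{d}{ds}\|w_n(s,\cdot)\|_{L^2}^2=2n\|w_n(s,\cdot)\|_{L^2}^2$ with $n$ arbitrary, so $E'(s)$ is not controlled by $E(s)$ (it involves $\|w_t\|$), and no Gronwall inequality propagates the vanishing of the slice data. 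Your other suggestion, Aronszajn-type unique continuation, is the right kind of tool, but as written it is only a pointer; moreover proposing to ``cite it from Section \ref{proofoflemmas}'' is circular, since that unique continuation statement \emph{is} the content of the paper's proof of this proposition.

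What the paper actually does is supply that step via the similarity principle for Cauchy--Riemann type operators: writing $F(t,u_1)-F(t,u_2)=R(t,u_1,u_2-u_1)(u_2-u_1)$ gives $v_s-Jv_t+A(s,t)v=0$ for $v=u_1-u_2$; passing to the complex coordinate $z=s+it$ and setting $w(z)=e^{\int_0^z A(\zeta)\,d\zeta}v(z)$ reduces this to $\partial w=0$, so $w$ is analytic. Since $v$ (hence $w$) vanishes on the entire circle $\{s=0\}$, its zero at the origin is not isolated, so $w\equiv 0$ on a disc, and continuation along the cylinder gives $v\equiv 0$, i.e.\ $u_1=u_2$. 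To repair your proposal you would either have to carry out this holomorphicity/identity-theorem argument (or an equivalent Carleman-estimate unique continuation for the elliptic system), rather than leave it as a citation, and you should drop the Gronwall alternative.
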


\begin{proof}
See Section \ref{proofoflemmas}.
 \end{proof}

For $\phi^\sigma$ we have the following commuting diagram:
$$
\begin{diagram}
\node{\R\times X} \arrow{e,l}{\phi^\sigma} \arrow{s,l}{{\rm id} \times \iota}\node{X} \arrow{s,r}{\iota}\\
\node{\R\times \sX} \arrow{e,l}{T^\sigma}   \node{\sX,} 
\end{diagram}
$$
with $u(0,\cdot) \mapsto T^\sigma(u(0,\cdot)) = u(\sigma,\cdot)$, and $T^\sigma$ defines a flow on $\sX$.

The principal tool in the proof of Theorem \ref{poincare-bendixson1} is the existence of an unbounded, discrete Lyapunov function, which decreases   along orbits of the flow $\phi^\sigma.$
Let  $u^1, u^2 \in X$ be two solutions, with $ u^1\not=u^2$,
such that    the function $t\mapsto u^1(s,t)-u^2(s,t)$ is nowhere zero. Then 
define $w:= u^1-u^2\in C^0(\R\times S^1;\R^2).$ 
The  $s$-dependent  winding number $\sW$ of the pair $(u^1,u^2)$ is defined as the winding number of $w$ about the origin, i.e.
\begin{equation}
\label{eqn:wind}
\sW\bigl(u^1(s,\cdot),u^2(s,\cdot)\bigr):=\sW(w(s,\cdot),0)=\frac{1}{2\pi}\int_{S^1} w^*\theta,
\end{equation}
where 
$\theta=\frac{-q dp+p dq}{p^2+q^2}$ is a closed one-form on $\R^2\setminus \{0\}$, cf.\  \cite{GVVW}.
A pair of solutions $(u^1, u^2)\in X\times X$ is said to be \emph{singular} if they belong to the \lq\lq crossing\rq\rq  \ set defined by 
$$
\Sigma_X:=\{(u^1,u^2)\in X\times X : \exists~ s\in \R\ :\ u^1(s,t)=u^2(s,t)\ \text{for some } t\in S^1\}.
$$
The Lyapunov function
 $W\colon (X\times X)\setminus \Sigma_X \to \Z$ is defined by
\begin{equation}
\label{defn:w}
W(u^1,u^2) := \sW\bigl( \iota(u^1),\iota(u^2)\bigr).
\end{equation}
The Lyapunov function $W$ is continuous on $(X\times X)\setminus \Sigma_X$ and constant on connected components.
The set $\Sigma_X$ is a closed  in $X\times X,$ since uniform convergence on compact sets implies point-wise convergence.
The Lyapunov function $W$ is a symmetric:
$$
W(u^1,u^2)=W(u^2,u^1),\ \ \ \  \ \text{for all} \ (u^1,u^2)\not\in \Sigma_{X}.
$$
The \emph{diagonal} in $X\times X$ is defined by
$$
\Delta:=\{(u^1,u^2)\in X\times X: u^1=u^2\},
$$
and $\Delta\subset \Sigma_X$.
The flow $\phi^\sigma$ induces a product flow on $X\times X$ via $(u^1,u^2) \mapsto \bigl(\phi^\sigma(u^1),\phi^\sigma(u^2)\bigr)$,
and the diagonal $\Delta$ is invariant for the product flow.
For the action of the flow on $W$ we have
\begin{eqnarray*}
W\bigl(\phi^\sigma(u^1),\phi^\sigma(u^2)\bigr) &=& \sW\bigl(\iota\circ\phi^\sigma(u^1),\iota\circ\phi^\sigma(u^2)\bigr)\\
&=& \sW\bigl(T^\sigma( \iota(u^1)),T^\sigma(\iota(u^2))\bigr) = \sW(u^1(\sigma,\cdot),u^2(\sigma,\cdot)).
\end{eqnarray*}

In \cite{GVVW} it is proved that the set $\Sigma_{X}\setminus\Delta$ is ``thin" in $X\times X$,
which is the content of the following proposition.

\begin{proposition}[see \cite{GVVW}]
\label{thinness}
For every  singular solution pair  $(u^1,u^2)\in\Sigma_{X}\setminus\Delta$, there exists an $\eps_0 = \eps(u^1,u^2)>0$, such that
$(\phi^{\sigma}(u^1),\phi^{\sigma}(u^2))\not\in\Sigma_{X}$, for all $\sigma\in (-\eps_0,\eps_0)\setminus \{0\}$.
\end{proposition}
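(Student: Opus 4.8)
The plan is to show that a singular pair $(u^1,u^2)\in\Sigma_X\setminus\Delta$ becomes non-singular under arbitrarily small translations, by studying the zero set of the difference $w(s,t):=u^1(s,t)-u^2(s,t)$. The key point is that $w$ satisfies a linear Cauchy-Riemann type equation obtained by subtracting the equations for $u^1$ and $u^2$: writing $F(t,u^1)-F(t,u^2)=A(s,t)w(s,t)$ with $A(s,t)=\int_0^1 D_uF(t,u^2+\tau w)\,d\tau$ a smooth bounded matrix-valued coefficient, one gets
\begin{equation*}
w_s-Jw_t+JA(s,t)w=0.
\end{equation*}
Thus $w$ is a solution of a first-order elliptic system with smooth coefficients, and by elliptic regularity $w$ is smooth (indeed $w\not\equiv0$ since $u^1\neq u^2$, by the unique continuation property for such systems, which one should cite — e.g.\ \cite{GVVW} or the standard Carleman-estimate/Aronszajn results).

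The main tool is the structure of the zero set of $w$. For solutions of perturbed Cauchy-Riemann equations, zeros are isolated in the following strong sense: at any zero $(s_0,t_0)$ of $w$, after a local change of coordinates $w$ behaves like a non-zero holomorphic function, so $w$ vanishes to finite order and its zero set near $(s_0,t_0)$ consists of finitely many arcs through $(s_0,t_0)$ meeting transversally to generic vertical lines $\{s=\text{const}\}$; in particular $w$ has only finitely many zeros on each circle $\{s\}\times S^1$, and the set of $s$ for which $w(s,\cdot)$ has a zero is discrete. This is precisely the content that makes $\sW$ a discrete Lyapunov function, and it is the crux of \cite{GVVW}. Granting this, let $s_1,s_2,\dots$ be the (locally finite) set of $s$-values at which $w(s,\cdot)$ vanishes somewhere on $S^1$; for $(u^1,u^2)\in\Sigma_X$ at least one such $s$-value, say $\bar s$, exists. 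Translating: $(\phi^\sigma(u^1),\phi^\sigma(u^2))$ corresponds to the difference $w(\cdot+\sigma,\cdot)$, which is singular exactly when $\sigma+\bar s$ hits one of the $s_i$. Since the $s_i$ are isolated, one may pick $\bar s=s_j$ and set $\eps_0>0$ to be the distance from $s_j$ to the nearest other element of the zero-$s$ set (or $\eps_0=1$ if $s_j$ is the only one in a neighborhood); then for $0<|\sigma|<\eps_0$ the translated pair has no zero, i.e.\ lies outside $\Sigma_X$.

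I would organize the write-up as: (1) derive the linear equation for $w$ and record smoothness/unique continuation; (2) invoke the local normal form at zeros of $w$ (similarity principle / Carleman: $w$ looks locally like $z^k\cdot(\text{nonvanishing})$ in a suitable complex coordinate), concluding that the ``bad'' $s$-set $Z:=\{s:\exists t,\ w(s,t)=0\}$ is closed and discrete in $\R$; (3) conclude the proposition by the translation argument above, taking $\eps_0$ to be half the gap in $Z$ at the relevant point. The main obstacle — and the part that genuinely needs \cite{GVVW} rather than a one-line argument — is step (2): establishing that zeros of $w$ are isolated \emph{in the $s$-direction}, i.e.\ that $Z$ has no accumulation points. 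Away from zeros this is immediate, but one must rule out a sequence $s_n\to s_*$ with $w(s_n,t_n)=0$; this is exactly where the finite vanishing order (no zero of infinite order, by unique continuation) and the holomorphic normal form are essential, since they force the local zero set to be a finite union of arcs crossing $\{s=s_*\}$ only at isolated points. Everything else — the regularity of $w$, the subtraction trick, the translation bookkeeping — is routine once that structural fact is in hand.
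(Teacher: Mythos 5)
The paper does not actually prove Proposition \ref{thinness}: it is quoted from \cite{GVVW}. Your outline follows the same route as that reference — subtract the two equations to get a linear perturbed Cauchy--Riemann system $w_s-Jw_t+JA(s,t)w=0$ for $w=u^1-u^2$ (your derivation of this, and of $A$, is correct), and deduce from the local structure of the zero set of $w$ that the crossing $s$-values form a discrete set, so that small nonzero translations leave $\Sigma_X$. So the strategy is the right one, and it is the one the cited source carries out.

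There is, however, one genuine error in the structural statement you lean on in the second paragraph. You describe the zero set of $w$ near a zero $(s_0,t_0)$ as ``finitely many arcs through $(s_0,t_0)$ meeting transversally to generic vertical lines''. That is the zero-set picture for \emph{scalar parabolic} equations (the Angenent/Fiedler--Mallet-Paret setting), not for the Cauchy--Riemann system, and if it were true here it would defeat the argument rather than prove it: an arc through $(s_0,t_0)$ transverse to the lines $\{s=\mathrm{const}\}$ projects onto an $s$-interval, so the set $Z=\{s:\exists\, t,\ w(s,t)=0\}$ would contain a whole neighbourhood of $s_0$, and the translated pair would remain in $\Sigma_X$ for all small $\sigma$. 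The correct statement — which you also give, inconsistently, in step (2) via the similarity principle ($w=\Phi h$ with $h$ holomorphic, i.e.\ $w\sim z^k\cdot(\text{nonvanishing})$ in a suitable coordinate) — is that the zeros of a not-identically-zero solution $w$ of such a system are \emph{isolated points} of finite multiplicity in $\R\times S^1$. From isolated zeros, discreteness of $Z$ does follow (only finitely many zeros in any compact strip $[a,b]\times S^1$), and your translation argument then closes the proof. So keep step (2) and delete the ``arcs'' description; as written, the two claims contradict each other and only one of them supports the conclusion.

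Two smaller points of bookkeeping. First, the proposition is only non-vacuous if $\Sigma_X$ is read as the set of pairs crossing at $s=0$ (consistent with $W$ being defined through $\iota(u)=u(0,\cdot)$ and with Axiom (\ref{piandsigmageneral})); with the literal ``$\exists\, s\in\R$'' reading, $\Sigma_X$ is invariant under the translation flow. Your argument implicitly uses the $s=0$ reading, so the singular pair gives $0\in Z$ and you should take $\eps_0$ to be half the gap of $Z$ around $0$, not around a generic $s_j$. Second, $w\not\equiv 0$ is immediate from $u^1\neq u^2$; what unique continuation (Carleman/similarity principle) buys you is the finite vanishing order that makes the local normal form, and hence the isolation of zeros, available.
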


Orbits which intersect $\Sigma_{X}$ ``transversely'' (and thus are not in the diagonal) instantly  escape from $\Sigma_X$   and   the diagonal $\Delta$ is the   maximal invariant set contained in $\Sigma_{X}.$
  The following proposition proves that 
  $W$   is a discrete Lyapunov function.  

\begin{proposition}[see \cite{GVVW}]
\label{prop:lyap}
For every pair of singular solutions   $(u^1,u^2)\in\Sigma_{X}\setminus\Delta$, there exists an $\eps_0 = \eps(u^1,u^2)>0$, such that 
$
W(\phi^{\sigma}(u^1),\phi^{\sigma}(u^2))>W(\phi^{\sigma'}(u^1),\phi^{\sigma'}(u^2))$, for all $\sigma\in (-\eps_0,0)$ and all $\sigma' \in (0,\eps_0)$.
\end{proposition}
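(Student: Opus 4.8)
The plan is to reduce the statement to a local analysis of the zero set of the difference $w(s,t) = u^1(s,t) - u^2(s,t)$ near an instant $s=s_0$ at which $w(s_0,\cdot)$ has a zero, and then to invoke the local structure of solutions of the (linear, $s$-translated) Cauchy-Riemann-type equation satisfied by $w$. Since $(u^1,u^2) \notin \Delta$, the function $w$ is not identically zero; writing the equation (\ref{nonlinearCR}) for $u^1$ and $u^2$ and subtracting, $w$ satisfies a linear equation of the form $w_s - J w_t + A(s,t) w = 0$ with a smooth (matrix-valued) coefficient $A$ obtained by the fundamental theorem of calculus applied to $F(t,u^1)-F(t,u^2)$. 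This is a first-order elliptic system to which the similarity-principle / Carleman-type estimates apply, giving that the zeros of $w$ are isolated in $\R \times S^1$ (after possibly discarding the trivial case $w\equiv 0$, already excluded), and near each zero $w$ behaves like a nonzero homogeneous harmonic polynomial of some degree $n \ge 1$ in suitable local coordinates. This is precisely the input used in \cite{GVVW} for Proposition \ref{thinness}, and I would cite it.

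From that local normal form the monotonicity of the winding number is a clean topological computation. Fix $(u^1,u^2) \in \Sigma_X \setminus \Delta$. By Proposition \ref{thinness} there is $\eps_0>0$ so that $w(s,\cdot)$ is nowhere zero on $S^1$ for every $s \in (-\eps_0,\eps_0)\setminus\{0\}$, hence $W(\phi^\sigma(u^1),\phi^\sigma(u^2)) = \sW(w(\sigma,\cdot),0)$ is well defined and locally constant on each of the intervals $(-\eps_0,0)$ and $(0,\eps_0)$; call these constant values $W_-$ and $W_+$. It remains to show $W_- > W_+$. The winding number $\sW(w(\sigma,\cdot),0)$ counts, with sign, the number of times $t\mapsto w(\sigma,t)$ winds around the origin; crossing $s=0$ this number can only change through the zeros of $w$ on the slice $\{s=0\}\times S^1$, of which there are finitely many, say at $t_1,\dots,t_k$, with local vanishing orders $n_1,\dots,n_k \ge 1$. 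Near each $(0,t_j)$ the local model is $w(s,t) \approx \mathrm{Re}\,\bigl((t - t_j) + i\,c_j\, s\bigr)^{n_j}$-type behaviour (the precise statement being that, in local holomorphic-like coordinates adapted to the operator $\partial_s - J\partial_t$, $w$ is asymptotic to a nonzero homogeneous polynomial solution of the constant-coefficient equation, whose zero set through the origin consists of $n_j$ lines). Passing from $s<0$ to $s>0$ the local contribution to the winding number drops by exactly $n_j$ at each such point — this is the standard "nodal lines rotate" phenomenon and is the genuinely substantive step. Summing, $W_- - W_+ = \sum_{j=1}^k n_j \ge k \ge 1 > 0$, which gives the strict inequality.

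Concretely I would organize the argument as follows. First, derive the linear equation for $w$ and record that its coefficient matrix is smooth. Second, quote the local structure theorem for zeros of solutions of such elliptic systems (isolatedness of zeros, finite vanishing order, local polynomial normal form) exactly as established in \cite{GVVW}; together with Proposition \ref{thinness} this yields the existence of $\eps_0$ and the well-definedness of $W_\pm$. Third, localize: choose disjoint coordinate neighbourhoods $U_j$ of the zeros $(0,t_j)$ on $\{s=0\}\times S^1$ such that $w$ is nonzero on $(\overline{U_j}\setminus\{(0,t_j)\})\cap(\{-\delta\le s\le\delta\}\times S^1)$ for small $\delta$, and write the change in winding number as a sum of local changes over the $U_j$ plus a contribution from the complement, the latter being zero because $w$ is nonvanishing there for all $|s|\le\delta$. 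Fourth, compute the local change at each $U_j$ from the polynomial normal form, obtaining $-n_j$; this is where one must be careful with the orientation/sign conventions so that the decrease (rather than increase) comes out, and it is the step I expect to be the main obstacle to write cleanly — everything else is bookkeeping. Finally, sum and conclude $W_- - W_+ = \sum_j n_j > 0$, and shrink $\eps_0$ if necessary so that $W$ is actually constant (equal to $W_\mp$) on each side, which is the assertion of the proposition.
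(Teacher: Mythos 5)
The paper does not actually prove Proposition \ref{prop:lyap}: like Proposition \ref{thinness}, it is imported verbatim from \cite{GVVW}, so there is no in-text argument to compare yours against. Your sketch is the standard route taken in that reference (and, in the parabolic analogue, by Angenent and by \cite{fiedlermallet}): subtract the two solutions of (\ref{nonlinearCR}) to get $w_s-Jw_t+A(s,t)w=0$, invoke the similarity principle/Carleman-type unique continuation to see that zeros of $w$ are isolated with finite vanishing order and that $w$ is locally similar to a nonzero homogeneous polynomial solution of the constant-coefficient equation, and then compute the jump of the winding number across the singular time as the sum of the local vanishing orders. So the approach is the right one and matches the cited source.

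Two points keep your write-up from being a proof. First, you explicitly defer the sign computation, which is the actual content of the Lyapunov property (isolatedness alone gives Proposition \ref{thinness}, not the strict drop). It does close correctly: with the principal part $\partial_s-J\partial_t$, the local model is (similar to) a polynomial holomorphic in $z=t+is$ of degree $n_j$, and a direct argument count on a horizontal segment passing just below versus just above the zero shows the local change of $\tfrac{1}{2\pi}\arg w$ goes from $+n_j/2$ to $-n_j/2$, so $W_- - W_+=\sum_j n_j>0$; also note your displayed local model ``$\mathrm{Re}\bigl((t-t_j)+ic_js\bigr)^{n_j}$'' is real-valued while $w$ is $\R^2$-valued, so it should be the complex power itself. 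Second, in the localization step the contribution of the complement of the neighbourhoods $U_j$ to $W_--W_+$ is not exactly zero; it is $o(1)$ as the two slice times approach $0$ (by uniform nonvanishing and continuity of $w$ there), and one concludes by integrality of $W_--W_+$ together with the local constancy of $W$ on $(-\eps_0,0)$ and $(0,\eps_0)$ guaranteed by Proposition \ref{thinness}. With those two repairs the argument is complete and is essentially the proof in \cite{GVVW}.
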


For a given $u\in X$ define the $\alpha$- and $\omega$-limit sets as:
$$
\begin{array}{rcl}
\omega(u)&:=&\{w\in X: \phi^{\sigma_n}(u)\xrightarrow{X}w, \text{\ for\ some }\ \sigma_n\to\infty\},\\
\alpha(u)&:=&\{w\in X: \phi^{\sigma_n}(u)\xrightarrow{X}w, \text{\ for\ some }\ \sigma_n\to-\infty\}.
\end{array}
$$
The sets $\alpha(u)$ and $\omega(u)$ are closed invariant sets for the flow $\phi^\sigma$, see
\cite[Lemma 4.6 Chapter IV]{hajek}.
 Since $X$ is compact, also $\alpha(u)$ and $\omega(u)$ are compact. Compactness of $X$ also implies  that
 $\alpha(u)$ and $\omega(u)$ are non-empty, see
  \cite[Theorem 4.7 Chapter IV]{hajek}.
  The Hausdorff property of $X$ and the continuity of the flow $\phi^\sigma$ imply
  that $\alpha(u)$ and  $\omega(u)$ are connected sets, see \cite[Theorem 4.7 Chapter IV]{hajek}.
%
Define the equilibria of $\phi^\sigma$ by
$$
E:=\{u\in X:\phi^{\sigma}(u)=u \ \ \text{for all} \ \sigma\in \R\}.
$$
Equilibria are functions $u= u(t)$
  which satisfy the stationary equation
$
u_t=F(t,u).
$

\section{The abstract   Poincar\'{e}-Bendixson Theorem}\label{abstractsetting}

The concepts introduced so far can be 
embedded in a more abstract setting, which generalizes the work by  Fiedler and  Mallet-Paret in \cite{fiedlermallet}.
Let $\phi^\sigma$ be a continuous flow on a \emph{compact} 
 Hausdorff space $X.$ In the case of the Cauchy-Riemann equations the flow $\phi^{\sigma}$ is defined in (\ref{phi}), where the space $X$ is either the full solution space, or  the space which consists of the closure of a single entire (bounded) orbit.
 
 The notions of $\alpha$- and $\omega$-limit sets, defined in Section\ \ref{sectionmainresult} remain unchanged, and 
 $\alpha(u)$ and $\omega(u)$ are non-empty, compact, connected,  invariant sets.

Let $\Delta=\{(u^1,u^2)\in X\times X\colon u^1=u^2\}$ be   invariant  for the product flow induced by $\phi^\sigma.$
We assume that there exist a closed \lq\lq thin\rq\rq\ singular set $\Sigma$, with $\Delta \subset \Sigma\subset X\times X,$
and functions  $W \colon (X\times X)\setminus \Sigma\to \Z$ and  $\pi:X\to\pi(X)\subset\R^2,$ which satisfy the following five axioms:
\begin{listass}
\item\label{continuityofW} the function $W:X\times X\setminus\Sigma\to \Z$, is continuous and symmetric; 
\item\label{Wandpi} the mapping $\pi:X\to \pi(X)\subset\R^2$, is a continuous projection onto its (compact) image;
\item\label{piandsigmageneral} the set 
$\{(u^1,u^2)\in X\times X\colon \pi(u^1)=\pi(u^2)\}$ is a subset of $\Sigma;$ 
\item\label{thingeneral}  for every $(u^1,u^2)\in \Sigma\setminus\Delta,$   there exists an $\eps_0 >0$, depending on $(u^1,u^2)$, such that
$(\phi^{\sigma}(u^1),\phi^{\sigma}(u^2))\not\in\Sigma$,  
for all $\sigma \in (-\eps_0,\eps_0)\setminus \{0\}$;
\item\label{Wdropping} 
for every $(u^1,u^2)\in \Sigma\setminus\Delta,$   there exists an $\eps_0 >0$, depending on $(u^1,u^2)$, such that
$$
W(\phi^{\sigma}(u^1),\phi^{\sigma}(u^2))>W(\phi^{\sigma'}(u^1),\phi^{\sigma'}(u^2)),
$$ 
for all $ \sigma\in(-\eps_0,0)$ and all $\sigma'\in (0,\eps_0)$.
\end{listass}
Axioms (\ref{continuityofW})-
(\ref{Wdropping})
are modeled on the properties of the non-linear Cauchy-Riemann Equations discussed in Section \ref{sectionmainresult}, with $\pi=\pi_{t_0}$ defined in (\ref{pi}).
The above axioms also
  generalize the conditions in the work of  Fiedler and  Mallet-Paret in \cite{fiedlermallet}. Note that the function $W$  is a priori unbounded in the present case and the flow $\phi^\sigma$ does not necessarily regularize. 
Under these assumptions we prove the following Theorem.
\begin{theorem}[Poincar\'{e}-Bendixson]\label{pb}
Let $\phi^\sigma$ be a continuous   flow on a compact   Hausdorff space $X.$ Let $\Sigma$ be a closed subset of $X\times X$, and let  $W\colon (X\times X)\setminus\Sigma\to \Z$  and  $\pi:X\to \pi(X)\subset \R^2$ be mappings as defined above, and which satisfy Axioms (\ref{continuityofW})-(\ref{Wdropping}).
Then, for $\omega(u)$ we have the following dichotomy
\begin{enumerate}
\item[(i)] either $\omega(u)$ consists of precisely one periodic orbit, or else
\item[(ii)] $\alpha(w)\subseteq E$ and $\omega(w)\subseteq E$, for every $w\in\omega(u).$
\end{enumerate}
The same dichotomy holds for $\alpha(u).$
\end{theorem}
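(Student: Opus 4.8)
The plan is to mimic the classical Poincar\'e--Bendixson argument, replacing the planar Jordan curve theorem with the combination of the discrete Lyapunov function $W$ and the planar projection $\pi$. First I would establish the key rigidity statement: if $v\in\omega(u)$ and the forward (or backward) orbit of $v$ is \emph{not} entirely contained in $E$, then along its trajectory no singular crossing with another limit-set element can persist, because of Axioms (\ref{thingeneral}) and (\ref{Wdropping}). Concretely, I would first show that $W$ is bounded on pairs drawn from $\omega(u)\times\omega(u)$: since $\omega(u)$ is compact and connected and $\phi^\sigma$ acts on it, for any fixed $v^1,v^2\in\omega(u)$ with $(v^1,v^2)\notin\Sigma$, the value $W(\phi^\sigma v^1,\phi^\sigma v^2)$ is monotone non-increasing in $\sigma$ by Axiom (\ref{Wdropping}) (it can only drop when the pair crosses $\Sigma\setminus\Delta$, and it is locally constant off $\Sigma$ by Axiom (\ref{continuityofW})); boundedness then forces the number of crossings to be finite, so that for $|\sigma|$ large the pair $(\phi^\sigma v^1,\phi^\sigma v^2)$ stays out of $\Sigma$ and $W$ stabilizes to constants $W^\pm$.

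Next I would exploit this to analyze a single $v\in\omega(u)$. Consider the pair $(v,\phi^{\tau}v)$ for small $\tau>0$: either this pair lies in $\Delta$ for some $\tau$ (in which case $v$ is periodic and I would argue, using minimality/compactness of $\omega(u)$ together with the fact that $\omega(u)$ is connected and the projection $\pi$ separates distinct periodic orbits via Axiom (\ref{piandsigmageneral}), that $\omega(u)$ consists of exactly that one periodic orbit — this gives alternative (i)); or the pair never enters $\Delta$. In the latter case I want to conclude that $\alpha(v)$ and $\omega(v)$ consist only of equilibria. Suppose not, say $p\in\omega(v)\setminus E$. Then $p$ is not a fixed point, so there is $\tau>0$ with $\phi^\tau p\neq p$. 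I would compare the three orbits $v$, $\phi^\tau v$, and points near $p$; using the stabilized winding numbers $W^\pm$ and the planar structure of $\pi(\omega(u))$, I would derive that a crossing in $\Sigma\setminus\Delta$ would be forced for arbitrarily large times, contradicting finiteness of crossings established above. The role of Axiom (\ref{piandsigmageneral}) is crucial here: it says that whenever two limit points have the same $\pi$-image they are automatically singular, which is what lets planar recurrence of $\pi(\omega(u))$ translate into crossings of $\Sigma$.

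The cleanest route is probably to prove first that for $v\in\omega(u)$ with $\{v\}$ not a fixed point, the projected orbit $\sigma\mapsto \pi(\phi^\sigma v)$ is injective on all of $\R$ (no self-intersections) unless $v$ is periodic: a self-intersection $\pi(\phi^{\sigma_1}v)=\pi(\phi^{\sigma_2}v)$ with $\sigma_1\neq\sigma_2$ puts $(\phi^{\sigma_1}v,\phi^{\sigma_2}v)\in\Sigma$ by Axiom (\ref{piandsigmageneral}), and then Axioms (\ref{thingeneral})–(\ref{Wdropping}) plus the monotone stabilization of $W$ along the pair $(\phi^{\sigma}v,\phi^{\sigma+(\sigma_2-\sigma_1)}v)$ force either periodicity or a strict drop of $W$ that cannot be sustained; iterating gives a contradiction unless periodicity holds. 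Once the projected orbit is a non-self-intersecting planar curve, the classical Poincar\'e--Bendixson trap-region/transversal argument applies to $\pi(\omega(u))\subset\R^2$, and lifting back through $\pi$ — which by (the abstract analogue of) Theorem \ref{teorema2} is a homeomorphism on $\omega(u)$ — yields the dichotomy. The $\alpha$-limit case is identical after reversing time.

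\textbf{Main obstacle.} The hard part will be the interplay between the \emph{unboundedness} of $W$ and the stabilization argument: one must rule out that $W$ decreases without bound along pairs of orbits inside $\omega(u)$. The resolution should come from compactness of $\omega(u)$ together with the fact that $\Sigma$ is closed and thin (Axiom (\ref{thingeneral}) gives uniform-in-nothing but pointwise escape times), so a compactness/continuity argument on $\omega(u)\times\omega(u)\setminus\Sigma$ should bound the number of times any pair-trajectory can cross $\Sigma$; making this rigorous — essentially showing $W$ restricted to limit sets \emph{is} bounded even though it is globally unbounded — is the crux, and it replaces the role played by positivity of the Lyapunov function in \cite{fiedlermallet}. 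A secondary technical point is upgrading "no self-intersections of the projected orbit" to a full planar Poincar\'e--Bendixson conclusion in the possibly non-smooth setting; here I would invoke the continuous-flow version, e.g.\ \cite{hajek} or \cite{ciesielski}, applied to the image flow on $\pi(\omega(u))$.
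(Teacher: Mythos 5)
Your high-level strategy (combine the drop of the discrete Lyapunov function $W$ with the planar projection $\pi$ and a Jordan-curve style argument) is the same as the paper's, but two of the steps you flag as "should follow" are precisely where the real work lies, and your proposed resolutions would not go through as stated. First, the crux you identify — ruling out that $W$ drops forever along pairs drawn from the limit set — is not obtainable by "a compactness/continuity argument on $\omega(u)\times\omega(u)\setminus\Sigma$": that set is open, not compact (since $\Sigma$ is closed), and Axiom (\ref{thingeneral}) only gives pointwise, non-uniform escape times, so no finiteness of crossings follows this way; indeed the paper never proves that $W$ is bounded on $\omega(u)\times\omega(u)$ in the abstract. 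What the paper actually does (Lemma \ref{lem:not-in-sigma}, and again in Lemmas \ref{lemmak1}, \ref{lemma:new-not-in-sigma}) is different: assuming a singular pair $(w^1,w^2)\in\Sigma\setminus\Delta$ with $w^1,w^2\in\clos(\gamma(w))$, it transports the strict drop of $W$ via continuity to pairs of time-translates of the \emph{single} orbit $u$, namely $(\phi^{\sigma+s_1-s_2}(u),\phi^{\sigma}(u))$; since $w^1,w^2$ are approached along a divergent sequence of times, the drop recurs infinitely often and forces $W(\phi^{\sigma+s_1-s_2}(u),\phi^{\sigma}(u))\to-\infty$, which contradicts the fact that along a subsequence this quantity equals the fixed finite value $W(\phi^{s_1+\eps}(w),\phi^{s_2+\eps}(w))$. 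That is how the unboundedness from below of $W$ is circumvented — by comparison with one finite limiting value, not by a boundedness or crossing-count argument; this idea is missing from your proposal.

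Second, your reduction "once the projected orbit is non-self-intersecting, the classical trap-region/transversal argument applies to $\pi(\omega(u))$" is not justified: the projected flow $\psi^\sigma$ is defined only on the compact set $\pi\clos(\gamma(w))\subset\R^2$, which may have empty interior, so transversal curves and flow boxes in the classical sense (and the continuous-flow Poincar\'e--Bendixson theorems of \cite{hajek}, \cite{ciesielski}, which concern flows on the plane or on 2-manifolds) are not available. The paper spends Lemma \ref{sectionsnoneq} and Remark \ref{r:J0} building a substitute: sections that need not be curves, the three balls $B^{\pm},B^0$, and Jordan curves $\sJ_0,\sJ_\pm,\sG_\pm$ assembled from flow arcs and short line segments $\ell_0,\ell_\pm$, which is what makes the separation arguments in Propositions \ref{sv} and \ref{pr2} work on such thin sets. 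Relatedly, your passage from "$\omega(u)$ contains a periodic orbit" to "$\omega(u)$ is exactly one periodic orbit" (the paper's Proposition \ref{pr3}) is only gestured at; connectedness of $\omega(u)$ plus Axiom (\ref{piandsigmageneral}) does not suffice, and the paper needs the three nested projected periodic orbits together with Lemma \ref{lemma3.3mp} to reach a contradiction. So while the skeleton is right, the proposal as written has genuine gaps at the two decisive points.
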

As in \cite{fiedlermallet}  the proof of Theorem \ref{pb} will be divided into the three Propositions listed below.
\smallskip

{\em
From this point on
 we assume the hypotheses of Theorem \ref{pb}. 
 }
\begin{proposition}[Soft version]\label{sv}\label{softversion}
Let $u \in X$ and let $w\in \omega(u)$. Then $\omega(w)$ contains a periodic solution or an equilibrium. The same holds for $\alpha(w).$
\end{proposition}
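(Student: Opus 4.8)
The idea is to run a Poincaré–Bendixson-style argument on the two-dimensional image $\pi(X)$, using the discrete Lyapunov function $W$ to prevent the limit set of $w$ from being a genuinely two-dimensional recurrent object. Fix $u\in X$, $w\in\omega(u)$, and pick some $z\in\omega(w)$. If $z$ is an equilibrium we are done, so assume $z$ is not an equilibrium; I then want to produce a periodic orbit in $\omega(w)$.

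\textbf{Step 1: finite return differences have strictly smaller $W$.} The key mechanism is monotonicity of $W$ along orbits, upgraded from the infinitesimal Axiom (\ref{Wdropping}) to a statement about returns. Given any $v\in X$ that is not an equilibrium, and any pair $\sigma_1<\sigma_2$ of times such that $\phi^{\sigma_1}(v)$ and $\phi^{\sigma_2}(v)$ lie over the same point of $\R^2$ under $\pi$, Axiom (\ref{piandsigmageneral}) puts the pair $(\phi^{\sigma_1}(v),\phi^{\sigma_2}(v))$ in $\Sigma$; if this pair is not in $\Delta$, then by Axioms (\ref{thingeneral})--(\ref{Wdropping}) the quantity $\tau\mapsto W(\phi^{\tau}(v),\phi^{\sigma_2-\sigma_1+\tau}(v))$ is defined and locally constant off a discrete set of $\tau$'s where it strictly drops; a compactness/covering argument on a compact $\tau$-interval then gives $W(\phi^{\tau}(v),\phi^{\sigma_2-\sigma_1+\tau}(v))$ strictly decreasing in $\tau$ across any crossing. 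In particular $W$ cannot return to a previously attained value. This is the standard ``zero number drops at every crossing'' lemma adapted to the abstract setting, and it is really the heart of the whole theory; I expect the careful bookkeeping here — in particular ruling out that the returning pair lies in $\Delta$ unless $v$ is genuinely periodic — to be the main obstacle.

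\textbf{Step 2: a self-intersection of the $\pi$-projected orbit of $z$ forces periodicity.} Consider the orbit of $z$ and its projection $s\mapsto \pi(\phi^s(z))\in\R^2$. Suppose this planar curve has a self-intersection, i.e.\ $\pi(\phi^{s_1}(z))=\pi(\phi^{s_2}(z))$ for some $s_1<s_2$. If $(\phi^{s_1}(z),\phi^{s_2}(z))\in\Delta$ then $\phi^{s_1}(z)=\phi^{s_2}(z)$, so $z$ is $(s_2-s_1)$-periodic and $\gamma(z)\subseteq\omega(w)$ is the desired periodic orbit. Otherwise the pair is in $\Sigma\setminus\Delta$, and I use Step 1 together with recurrence: because $z\in\omega(w)$ and $w\in\omega(u)$, the orbit of $z$ comes back arbitrarily close to $z$ infinitely often, which (using continuity of $\phi$, $\pi$, and $W$, and the invariance of $\omega$-limit sets) would produce two times at which $W$ of the relevant shifted pair takes the same value while a crossing has occurred in between — contradicting the strict monotonicity from Step 1. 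Hence the non-diagonal case is impossible, and a self-intersection always yields a periodic orbit.

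\textbf{Step 3: absence of self-intersections also yields a periodic orbit.} If the planar curve $s\mapsto\pi(\phi^s(z))$ has \emph{no} self-intersection, then by Axiom (\ref{piandsigmageneral}) the pair $(\phi^{s_1}(z),\phi^{s_2}(z))$ avoids $\Sigma$ for all $s_1\ne s_2$, so $W(\phi^{s_1}(z),\phi^{s_2}(z))$ is a well-defined integer depending continuously — hence locally constantly — on $(s_1,s_2)$ in the connected set $\{s_1<s_2\}$, i.e.\ it is a single constant $k$. Now pick $z'\in\omega(z)\subseteq\omega(w)$ (nonempty by compactness of $X$); by Step 2 applied to $z'$ we are done unless $z'$ also has a non-self-intersecting projected orbit, in which case the whole $\alpha$- and $\omega$-limit structure of $z$ is embedded injectively in the plane via $\pi$, and the classical Poincaré–Bendixson theorem in $\R^2$ — applied to the image, a simple curve whose limit set is connected and, by the constancy of $W$, contains no pair in $\Sigma$ — forces $\omega(z)$ to be a single periodic orbit or to consist of equilibria; either way $\omega(w)\supseteq\omega(z)$ contains a periodic solution or an equilibrium. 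The same argument verbatim with time reversed handles $\alpha(w)$. The delicate point throughout is that $W$ being unbounded (unlike in \cite{fiedlermallet}) means I cannot argue ``$W$ is bounded below hence eventually constant''; instead all the force comes from Step 1's strict drop at crossings combined with recurrence, so I would make sure Step 1 is stated and proved with enough uniformity to be reused in Steps 2 and 3.
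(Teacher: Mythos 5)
Your Steps 1--2 are in the right spirit (they correspond to the paper's Lemma~\ref{decreasing lemma} and Lemma~\ref{lem:not-in-sigma}/Lemma~\ref{injectivityofpi1}), but Step 3 contains the genuine gap: you cannot simply ``apply the classical Poincar\'e--Bendixson theorem in $\R^2$ to the image.'' The projected dynamics $\psi^\sigma=\pi\circ\phi^\sigma\circ\pi^{-1}$ is a flow only on the compact set $\pi\clos(\gamma(z))\subset\R^2$, which may be nowhere dense (empty interior), and no extension of $\psi^\sigma$ to a planar neighborhood is available. Every classical version of Poincar\'e--Bendixson (including the continuous-flow versions in H\'ajek) is a statement about flows on two-dimensional regions or manifolds, and its proof needs transversal curves and flow boxes in the plane; none of this is available for a flow on a closed planar set of empty interior. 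This is precisely the obstruction the paper points out before Lemma~\ref{sectionsnoneq}, and it is why the actual proof replaces the transversal by a \emph{section} (a set, not a curve), and then runs a bespoke Jordan-curve argument: the curves $\sJ_0,\sG_\pm$ are assembled from two orbit arcs of the projected flow together with short line segments $\ell_0,\ell_\pm$ chosen inside small balls (Lemma~\ref{sectionsnoneq}, Remark~\ref{r:J0}), and the contradiction comes from the separation of the plane into the regions $A_\pm^{1,2}$ combined with the fact that $\pi\circ\phi^\sigma(w)$ must accumulate on both sides. Your ``no self-intersection'' case is exactly the paper's Case~2, but it needs this hand-built flow-box/Jordan-curve machinery, not a citation of the classical theorem.

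A secondary, fixable flaw is in Step 2: the claim that ``because $z\in\omega(w)$ and $w\in\omega(u)$, the orbit of $z$ comes back arbitrarily close to $z$ infinitely often'' is false in general -- a point of an $\omega$-limit set need not be recurrent (think of a point on a heteroclinic orbit inside the limit set). The correct source of recurrence is not the orbit of $z$ but the orbit of $w$ (or of $u$), which accumulates on $\clos(\gamma(z))\subset\omega(u)$ at arbitrarily late times with the prescribed time offset; feeding this into the strict drop of $W$ gives either the ``$W\to-\infty$ versus finite limit'' contradiction or, equivalently, your ``same value after a crossing'' contradiction. This is exactly the paper's Lemma~\ref{lem:not-in-sigma}, which applies verbatim to $z$ since $\omega(w)\subseteq\omega(u)$, so Step 2 can be repaired; Step 3 cannot without importing the section and Jordan-curve construction.
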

 Proposition \ref{sv} implies that, since $\omega(w)$ and $\alpha(w)$ are both subsets of $\omega(u)$, also $\omega(u)$ contains a periodic solution or an equilibrium.
\begin{proposition}\label{pr2}
Let $u \in X$ and let $w\in\omega(u)$. Then either,
\begin{enumerate}
\item[(i)] $\alpha(w)$ and $\omega(w)$ consist only of equilibria, or else
\item[(ii)] $\gamma(w)$ is a periodic orbit.
\end{enumerate}
\end{proposition}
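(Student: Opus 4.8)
The plan is to combine the Soft Version (Proposition~\ref{sv}) with the discrete-Lyapunov machinery to upgrade the conclusion ``$\omega(w)$ contains a periodic orbit or an equilibrium'' into the stated dichotomy. Fix $u\in X$ and $w\in\omega(u)$. By Proposition~\ref{sv}, applied to both $\omega(w)$ and $\alpha(w)$, there is a periodic solution or equilibrium $p\in\omega(w)\cup\alpha(w)$; say $p\in\omega(w)$ (the case $p\in\alpha(w)$ is symmetric, reversing the flow). If $p$ is an equilibrium, I expect alternative~(i) to follow; if $p$ is a genuinely periodic (non-equilibrium) orbit, I expect alternative~(ii), i.e.\ $\gamma(w)$ itself is that periodic orbit. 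The key point in both cases is that $W$, evaluated on pairs drawn from the closed invariant set $\omega(u)$, must be \emph{constant in $\sigma$} along orbits: since $\omega(u)$ is invariant and the Lyapunov function $W(\phi^\sigma(v^1),\phi^\sigma(v^2))$ is monotone (strictly dropping across each crossing by Axiom~(\ref{Wdropping})) and integer-valued, a pair $(v^1,v^2)$ both of whose full orbits lie in the \emph{compact} set $\omega(u)$ cannot have $W$ drop infinitely often in forward or backward time; hence the orbit of $(v^1,v^2)$ meets $\Sigma$ only finitely often, and in fact — using that limit sets are approached — one shows $(v^1,v^2)$ either lies in $\Sigma$ for all time or its $\alpha$- and $\omega$-limit pairs lie in $\Sigma$.

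Concretely, first I would establish the following claim: for any $v\in\omega(u)$ and any $q$ in $\omega(v)$ or $\alpha(v)$, either $q=v$ (up to time shift, i.e.\ on the same orbit) or the pair $(v,q)$ — more precisely the pair formed by $v$ and a time-translate of $q$ — eventually leaves $\Sigma$ and $W$ is constant on the relevant semi-orbit. The mechanism is: pick $\sigma_n\to\infty$ with $\phi^{\sigma_n}(v)\to q$; the sequence $W(\phi^{\sigma_n}(v),\text{(something fixed)})$ would have to be eventually constant by monotonicity and integrality, which forces the number of crossings to be finite, and then Axiom~(\ref{thingeneral}) (instantaneous escape from $\Sigma$) plus closedness of $\Sigma$ pins down the structure. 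Next, apply this with $q=p$ the periodic/equilibrium point furnished by Proposition~\ref{sv}. Using the periodic (or stationary) orbit $\gamma(p)$ as a ``comparison curve'': for each point $z\in\gamma(p)$ consider the pair $(w,z')$ where $z'$ runs over $\gamma(z)=\gamma(p)$; since $\gamma(p)\subseteq\omega(w)\subseteq\omega(u)$ and $\gamma(w)\subseteq\omega(u)$, all these pairs have orbits trapped in the compact set $\omega(u)$, so $W$ takes finitely many values on $\gamma(w)\times\gamma(p)$ and is ``locally constant'' outside $\Sigma$. The topological heart is then to run the classical Poincar\'e--Bendixson-type argument in $\R^2$ after projecting by $\pi$: by Axiom~(\ref{piandsigmageneral}), $\pi(v^1)=\pi(v^2)\Rightarrow(v^1,v^2)\in\Sigma$, so on $(X\times X)\setminus\Sigma$ the map $\pi$ separates points, and $W$ being constant near such pairs controls how $\pi\circ\gamma(w)$ can wind around $\pi\circ\gamma(p)$ in the plane.

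The decisive dichotomy-producing step is the case analysis on $p$. If $p$ is a nonequilibrium periodic orbit: I would show $\gamma(w)=\gamma(p)$ by arguing that, were $w\notin\gamma(p)$, the pair $(w,p)$ (after a suitable time shift so that it avoids $\Sigma$, possible by the thinness Axiom~(\ref{thingeneral}) since $(w,p)\notin\Delta$) would have $W$ constant along its whole orbit — yet in the plane the curve $\pi\circ\phi^\sigma(w)$ must accumulate on the loop $\pi(\gamma(p))$ as $\sigma\to\infty$ through $\sigma_n$, and a planar curve spiraling toward a closed loop it does not lie on forces the winding number / crossing count relative to that loop to change, contradicting constancy of $W$ (this is exactly where Axiom~(\ref{Wdropping}) together with Axiom~(\ref{piandsigmageneral}) does the work: an intersection of $\pi\circ\gamma(w)$ with $\pi(\gamma(p))$ is a point of $\Sigma$, across which $W$ strictly drops, so only finitely many such intersections occur, which is incompatible with genuine spiraling). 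Hence $w\in\gamma(p)$, $\gamma(w)$ is periodic, and we are in case~(ii). If instead $p$ is an equilibrium, I would show $\omega(w)\subseteq E$ and $\alpha(w)\subseteq E$: any $v\in\omega(w)$ that is not an equilibrium would, by Proposition~\ref{sv} applied to $v$ together with the previous paragraph's constancy-of-$W$ analysis, generate a periodic comparison orbit inside $\omega(u)$ and lead to the same spiraling contradiction unless $\omega(w)$ and $\alpha(w)$ reduce to equilibria — giving case~(i). The same reasoning with time reversed handles $\alpha(u)$ in place of $\omega(u)$.

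\textbf{Main obstacle.} The hard part is making the ``planar spiraling forces a crossing'' argument rigorous in this abstract setting: one has only a \emph{continuous} flow on a compact Hausdorff $X$ (no smoothness, no manifold structure, no local sections a priori), so the intuition ``a spiral must cross the limit loop'' has to be extracted purely from (a) the monotone integer-valued $W$, (b) the thinness Axioms~(\ref{thingeneral})--(\ref{Wdropping}) guaranteeing instantaneous escape and strict drop, and (c) the projection $\pi$ to $\R^2$ whose fibers sit inside $\Sigma$. I anticipate needing a Jordan-curve-type argument applied to $\pi(\gamma(p))$ (when $p$ is periodic, $\pi(\gamma(p))$ is a loop in $\R^2$, and by Axiom~(\ref{piandsigmageneral}) + injectivity considerations it should be a \emph{simple} closed curve, so it separates the plane), combined with the observation that $\pi\circ\phi^\sigma(w)$ cannot cross this curve more than finitely often — reconciling ``accumulates on the curve'' with ``crosses it finitely often'' forces $\pi\circ\gamma(w)$ to lie \emph{on} the curve, and then another application of the separation/injectivity of $\pi$ off $\Sigma$ promotes this to $\gamma(w)=\gamma(p)$. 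Getting the topology of $\pi(\gamma(p))$ (simplicity of the loop, the separation property) cleanly from the axioms, and handling the equilibrium case where $\pi(\gamma(p))$ degenerates to a point, is where the real care is required.
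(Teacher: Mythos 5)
There is a genuine gap, and it sits exactly at the step you yourself flag as the ``decisive'' one. The claim that ``a planar curve spiraling toward a closed loop it does not lie on forces the winding number / crossing count relative to that loop to change'' is false: the standard picture of a trajectory spiraling onto a limit cycle accumulates on the cycle without ever meeting it, and nothing in Axiom~(\ref{piandsigmageneral}) forces $\pi\circ\phi^\sigma(w)$ to intersect $\pi(\gamma(p))$ just because it accumulates there. So the contradiction you propose in the periodic case evaporates, and with it the equilibrium case, which you reduce to ``the same spiraling contradiction.'' Two further problems compound this. First, even where intersections of $\pi\circ\gamma(w)$ with $\pi(\gamma(p))$ do occur, each one places a \emph{different} pair $(\phi^{\sigma}(w),z)$, $z\in\gamma(p)$, in $\Sigma$; Axiom~(\ref{Wdropping}) and Lemma~\ref{decreasing lemma} govern a \emph{fixed} pair flowed simultaneously, so ``finitely many intersections because $W$ drops at each'' is not automatic --- and since $W$ is unbounded below, compactness of $\omega(u)$ alone does not prevent infinitely many drops; one needs the continuity-at-a-limiting-nonsingular-pair argument that the paper runs in Lemmas~\ref{lem:not-in-sigma} and~\ref{lemmak1}. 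Second, in the equilibrium branch, knowing that \emph{one} equilibrium lies in $\omega(w)$ (which is all Proposition~\ref{sv} gives) does not yield $\alpha(w)\cup\omega(w)\subseteq E$; your plan for upgrading this is circular.

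The paper's proof is structured quite differently and shows what is actually needed. It argues by contradiction from ``there is a non-equilibrium $w^*\in\omega(w)$ and $\gamma(w)$ is not periodic,'' uses constancy of $W$ on pairs from $\clos(\gamma(w))$ (Lemmas~\ref{lem:not-in-sigma} and~\ref{lemma k}) to make $\pi$ injective there (Lemma~\ref{injectivityofpi1}), so that one has an honest planar flow; it then builds a section through $\pi(w^*)$ (Lemma~\ref{sectionsnoneq}) and Jordan curves out of orbit arcs of $w$ together with the small segments $\ell_-,\ell_0,\ell_+$ (Remark~\ref{r:J0}), and concludes that $\pi\alpha(w)$ and $\pi\omega(w)$ lie in disjoint planar regions, whence $\alpha(w)\cap\omega(w)=\varnothing$. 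The final contradiction --- which your proposal omits entirely --- comes from the orbit of $u$: Lemma~\ref{lemmak1} (which needs $\alpha(w)\cap\omega(w)=\varnothing$) lets one extend the projected flow-box picture to the tail $\{\phi^\sigma(u):\sigma\ge\sigma_*\}$ (Remarks~\ref{rmk:restrictionFBT} and~\ref{r:flowboxextends}), and then $\pi\circ\phi^\sigma(u)$ would have to accumulate on both $\pi\alpha(w)$ and $\pi\omega(w)$, which the Jordan-curve separation forbids. The hypothesis $w\in\omega(u)$ enters essentially through the orbit of $u$; a head-to-head comparison of $w$ with the periodic/stationary orbit furnished by Proposition~\ref{sv}, as in your proposal, cannot by itself rule out $\gamma(w)$ spiraling onto that orbit without being periodic.
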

\begin{proposition}\label{pr3}
Let $u \in X.$ If $\omega(u)$ contains a periodic orbit, then $\omega(u)$ is a single periodic orbit.
\end{proposition}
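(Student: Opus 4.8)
\textbf{Proof proposal for Proposition \ref{pr3}.}

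The plan is to argue by contradiction: suppose $\omega(u)$ contains a periodic orbit $\gamma(w_0)$ but $\omega(u)\neq\gamma(w_0)$. The strategy mirrors the classical Poincar\'e--Bendixson argument, with the planar picture supplied not by $X$ itself but by the projection $\pi$ from Axiom (\ref{Wandpi}). First I would establish that the projected periodic orbit $C:=\pi(\gamma(w_0))$ is a simple closed curve in $\R^2$: by Axiom (\ref{piandsigmageneral}), if $\pi(w_0(s_1))=\pi(w_0(s_2))$ for $s_1\neq s_2$ in a minimal period, then the corresponding pair lies in $\Sigma\setminus\Delta$, and invoking Axioms (\ref{thingeneral}) and (\ref{Wdropping}) along the periodic orbit would force the Lyapunov function $W$ evaluated on two time-shifts of $w_0$ to strictly drop while also being periodic in the shift parameter, a contradiction. (This is the same mechanism that should already appear in the proof of Theorem \ref{teorema2}/Proposition \ref{pr2}, so I would cite or reuse it.) Hence $\pi\big|_{\gamma(w_0)}$ is injective, and since $\gamma(w_0)$ is compact it is a homeomorphism onto the Jordan curve $C$, which separates $\R^2$ into an interior and an exterior region.

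Next I would use connectedness of $\omega(u)$ together with $\omega(u)\neq\gamma(w_0)$ to produce a point $w_1\in\omega(u)\setminus\gamma(w_0)$ whose projection $\pi(w_1)$ can be taken arbitrarily close to $C$; pushing this along the flow, the forward orbit $\{\phi^\sigma(w_1):\sigma\geq 0\}$ has its own $\omega$-limit set contained in $\omega(u)$ (invariance and closedness of $\omega(u)$). The crucial monotonicity input is that $\gamma(w_1)$ cannot cross $\gamma(w_0)$ in the sense of $\Sigma$: whenever $(\phi^\sigma(w_1),\phi^\sigma(w_0'))\in\Sigma\setminus\Delta$ for some time-shift $w_0'$ of $w_0$, Axiom (\ref{Wdropping}) says $W$ strictly decreases through that instant and Axiom (\ref{thingeneral}) says such instants are isolated. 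Combined with the fact that along the periodic orbit $\sigma\mapsto W(\phi^\sigma(w_1),\phi^{\sigma}(w_0))$ would have to return to its starting value if $w_1$ were to ``wrap around'' relative to $w_0$, this pins down that $\pi(\phi^\sigma(w_1))$ stays (eventually) on one side of the Jordan curve $C$ and spirals monotonically — the standard flow-box/transversal argument, now carried out in the $\pi$-image. I would make the transversal rigorous by choosing, via Axiom (\ref{thingeneral}), a short arc through $\pi(w_1)$ that the projected orbit crosses in a monotone fashion each time it returns near that point.

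From monotone spiralling I conclude that $\pi(\omega(w_1))$ — which is a subset of the planar set $\pi(\omega(u))$ — must coincide with $C$, because the nested sequence of ``loops'' of $\pi(\phi^\sigma(w_1))$ converges to a single Jordan curve sandwiched against $C$. Then I would lift this back to $X$ using Axiom (\ref{Wandpi}) and the injectivity of $\pi$ on limit sets (the analogue of Theorem \ref{teorema2}, available here since Proposition \ref{pr2} already shows $\omega(u)$ is either all equilibria or a periodic orbit plus possibly connecting orbits, and the periodic case gives injectivity of $\pi$): $\pi(\omega(w_1))=C$ forces $\omega(w_1)=\gamma(w_0)$, and moreover $\gamma(w_1)$ itself must be asymptotic to $\gamma(w_0)$ on both ends. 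But a point $w_1\in\omega(u)$ whose full orbit is a non-periodic orbit homoclinic/heteroclinic to $\gamma(w_0)$ contradicts alternative structure forced by Proposition \ref{pr2} applied at $w_1$ (which requires $\alpha(w_1),\omega(w_1)$ to be equilibria, not the periodic orbit $\gamma(w_0)$) — unless $w_1\in\gamma(w_0)$ after all. This contradiction shows $\omega(u)=\gamma(w_0)$.

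\textbf{Main obstacle.} The delicate point is making the ``monotone spiralling'' argument genuinely rigorous in the abstract setting: in the classical theorem one uses smooth transversals and the Jordan curve theorem for the flow itself, whereas here the flow lives on the possibly infinite-dimensional $X$ and only its $\pi$-shadow is planar. I expect the real work to be in showing that the projected orbit $\pi(\phi^\sigma(w_1))$ crosses a chosen short transversal arc \emph{monotonically} and \emph{infinitely often with intersections converging to $C$}, using only the discrete-Lyapunov Axioms (\ref{thingeneral})--(\ref{Wdropping}) rather than differentiability — in particular handling the possibility that $\pi(\phi^\sigma(w_1))$ touches $C$ tangentially, and ruling out that it could oscillate between the two sides of $C$ by exploiting that each side-change produces a point of $\Sigma\setminus\Delta$ and hence a strict drop in $W$, while $W$ along the compact limit set is bounded. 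Carefully bookkeeping these drops of the integer-valued $W$ against the topology of the Jordan curve is the heart of the proof.
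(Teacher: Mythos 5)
There is a genuine gap, and it sits exactly where you flag it: the ``monotone spiralling'' of the projected orbit across a short transversal arc is asserted, not proved, and in this abstract setting it is not available by a routine adaptation of the classical argument. The planar flow $\psi^\sigma$ is defined only on the projected set $\pi\clos(\gamma(w))$ (respectively its extension in Remark \ref{rmk:restrictionFBT}), which may have empty interior; Lemma \ref{sectionsnoneq} yields sections that need not be curves, and Remark \ref{r:J0} exists precisely because one cannot choose ``a short arc through $\pi(w_1)$ that the projected orbit crosses monotonically.'' In addition, to even speak of the planar picture of $\gamma(w_1)$ together with $\gamma(w_0)$ you need $(\phi^\sigma(w_1),w_0')\notin\Sigma$ for the relevant pairs; before Theorem \ref{pb} is proved, the paper only knows this for two points on a \emph{common} orbit closure (Lemma \ref{lem:not-in-sigma}, Lemma \ref{injectivityofpi1}) or for two stationary/periodic orbits (Lemma \ref{lemma3.3mp}). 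Your appeal to ``the analogue of Theorem \ref{teorema2}'' (injectivity of $\pi$ on all of $\omega(u)$) is circular: in the paper that statement (Proposition \ref{th2fmp}) is deduced \emph{from} Theorem \ref{pb}, hence from Proposition \ref{pr3} itself; and Proposition \ref{pr2} is a statement about each individual $w\in\omega(u)$, not the structure theorem for $\omega(u)$ you invoke. Finally, even granting the spiral, the step ``the nested loops converge to a Jordan curve sandwiched against $C$, hence $\pi(\omega(w_1))=C$'' is unjustified without a flow on a neighbourhood of $C$.

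For comparison, the paper avoids the transversal-at-the-periodic-orbit route altogether. Assuming $\gamma(p)\subsetneq\omega(u)$, it takes a small closed tubular neighborhood $V$ of $\gamma(p)$ free of equilibria, uses the entry times of $\phi^\sigma(u)$ into $V$ to produce a point $q\in\partial V\cap\omega(u)$ with $\omega(q)\subseteq V$, and concludes from Proposition \ref{pr2} that $\gamma(q)$ is a second periodic orbit; shrinking $V$ gives a third one, $\gamma(r)$, with $\pi\gamma(p),\pi\gamma(q),\pi\gamma(r)$ nested closed curves. Lemma \ref{lemma3.3mp} (constancy of $W$ between periodic orbits), continuity of $W$, the monotonicity of $\sigma\mapsto W(\phi^\sigma(u),p^1)$ from Lemma \ref{decreasing lemma}, and Axiom (\ref{Wdropping}) then show that for large $\sigma$ the curve $\pi\circ\phi^\sigma(u)$ can meet none of the three nested curves, while it must accumulate on all three -- a purely planar contradiction that needs no transversal, no spiralling, and no injectivity of $\pi$ beyond what is already established. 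If you want to salvage your outline, the missing ingredient is essentially this mechanism: replace the spiral by a quantitative statement that once $W(\phi^\sigma(u),\cdot)$ has stabilized against a periodic orbit, the projected orbit of $u$ can no longer touch its projection, and then manufacture enough (three) nested periodic orbits for the topological contradiction.
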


Proposition \ref{sv} is proved in Section \ref{sectionsv} and the proofs of  Propositions \ref{pr2} and \ref{pr3} are carried out in Section \ref{sectionstv}.  
Proposition \ref{sv} is used in the proof of Proposition \ref{pr2}.
Propositions \ref{pr2} and \ref{pr3} together imply Theorem \ref{pb}. 
Section \ref{sectiontechnicallemmas} contains a number of technical lemmas.

Theorem \ref{pb} is applied directly to the Cauchy-Riemann equations which proves Theorem \ref{poincare-bendixson1}. 
Theorem \ref{teorema2} is proved in Section  \ref{sectionstv}  with a formulation adapted to the abstract setting.
Finally, Section \ref{proofoflemmas} provides the proofs of Propositions \ref{Xcompact} and \ref{prop:aron}.

\section{The soft version}\label{sectionsv}
The hypotheses of  Section \ref{abstractsetting} will be assumed for the remainder of the paper.
%
\begin{lemma}
\label{decreasing lemma}
For every pair $(u^1,u^2) \in (X\times X)\setminus \Delta$
the set $$A_{(u^1,u^2)}:=\{\sigma\in \R\colon \bigl(\phi^\sigma(u^1),\phi^\sigma(u^2)\bigr)\in \Sigma\}$$ consists  of isolated points. Moreover, the mapping
$$
\sigma\mapsto W(\phi^\sigma(u^1),\phi^\sigma(u^2)), 
$$
defined for $\sigma\in\R\setminus A_{(u^1,u^2)},$
is a non-increasing function of $\sigma$ and is constant on the connected components of   $\R\setminus A_{(u^1,u^2)}$. 

\begin{proof}
Suppose  there exists an accumulation point $\sigma_n \to \sigma_*$, with $\sigma_n\in A_{(u^1,u^2)}.$ 
By definition $\bigl(\phi^{\sigma_n}(u^1),\phi^{\sigma_n}(u^2)\bigr) \in \Sigma\setminus \Delta$, since $\Delta$ is invariant and
$(u^1,u^2) \not \in \Delta$. By the continuity of
$\phi^\sigma$ we have 
$$
\bigl(\phi^{\sigma_n }(u^1),\phi^{\sigma_n }(u^2)\bigr)\xrightarrow{n\to \infty} \bigl(\phi^{\sigma_*}(u^1),\phi^{\sigma_*}(u^2)\bigr)\in \Sigma,
$$
since $\Sigma$ is closed. This proves that $\sigma_* \in A_{(u^1,u^2)}$.
The invariance of $\Delta$ implies that $\bigl(\phi^{\sigma_*}(u^1),\phi^{\sigma_*}(u^2)\bigr)\in \Sigma\setminus \Delta$.
By Axiom (A4) there exists an $\eps_0 >0$, depending on $\bigl(\phi^{\sigma_*}(u^1),\phi^{\sigma_*}(u^2)\bigr)$, such that
$(\phi^{\sigma_*+\eps}(u^1),\phi^{\sigma_*+\eps}(u^2))\not\in\Sigma$,  
for all $ \eps \in (-\eps_0,\eps_0)\setminus \{0\}$. This contradicts the fact that $\sigma_*$ is an accumulation point.

The set $A_{(u^1,u^2)}$ is a discrete and ordered set. Let $\sigma'<\sigma''$ be two consecutive points in $A_{(u^1,u^2)}$.
By Axiom (A1), $W$ is continuous and $\Z$-valued, and therefore $W(\phi^\sigma(u^1),\phi^\sigma(u^2))$ is constant on
$\sigma \in (\sigma',\sigma'')$.
By Axiom (\ref{Wdropping}), $W(\phi^\sigma(u^1),\phi^\sigma(u^2))$ drops at 
points in $A_{(u^1,u^2)}$, which shows that $W$ is non-increasing.
\end{proof}
\end{lemma}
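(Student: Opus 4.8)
The plan is to first show that $A_{(u^1,u^2)}$ has no accumulation points, and then read off the monotonicity of $W$ from the discreteness of $A_{(u^1,u^2)}$ together with Axioms (A1) and (A5).

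First I would argue by contradiction for discreteness. Suppose there is an accumulation point $\sigma_*$ of a sequence of distinct points $\sigma_n\in A_{(u^1,u^2)}$. Since $\Delta$ is invariant under the product flow and $(u^1,u^2)\notin\Delta$, every pair $\bigl(\phi^{\sigma_n}(u^1),\phi^{\sigma_n}(u^2)\bigr)$ lies in $\Sigma\setminus\Delta$. Continuity of $\phi^\sigma$ in $\sigma$ gives $\bigl(\phi^{\sigma_n}(u^1),\phi^{\sigma_n}(u^2)\bigr)\to\bigl(\phi^{\sigma_*}(u^1),\phi^{\sigma_*}(u^2)\bigr)$, and since $\Sigma$ is closed this limit lies in $\Sigma$; invariance of $\Delta$ again forces it into $\Sigma\setminus\Delta$. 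Now Axiom (A4) supplies $\eps_0>0$ with $\bigl(\phi^{\sigma_*+\eps}(u^1),\phi^{\sigma_*+\eps}(u^2)\bigr)\notin\Sigma$ for all $\eps\in(-\eps_0,\eps_0)\setminus\{0\}$, i.e. $A_{(u^1,u^2)}\cap\bigl((\sigma_*-\eps_0,\sigma_*+\eps_0)\setminus\{\sigma_*\}\bigr)=\emptyset$, contradicting $\sigma_n\to\sigma_*$ with $\sigma_n\neq\sigma_*$. Hence $A_{(u^1,u^2)}$ is a discrete (and closed) subset of $\R$; being ordered, its complement is a disjoint union of open intervals whose endpoints are consecutive points of $A_{(u^1,u^2)}$ (or $\pm\infty$).

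Next I would treat each connected component $I$ of $\R\setminus A_{(u^1,u^2)}$ separately: on $I$ the pair $\bigl(\phi^\sigma(u^1),\phi^\sigma(u^2)\bigr)$ stays out of $\Sigma$, so $\sigma\mapsto W\bigl(\phi^\sigma(u^1),\phi^\sigma(u^2)\bigr)$ is defined there, and by Axiom (A1) it is continuous; being $\Z$-valued on a connected set, it is constant on $I$. Finally, to see $W$ is non-increasing it suffices to check it does not increase across a point $\sigma_*\in A_{(u^1,u^2)}$. As above $\bigl(\phi^{\sigma_*}(u^1),\phi^{\sigma_*}(u^2)\bigr)\in\Sigma\setminus\Delta$, so Axiom (A5) yields $\eps_0>0$ with $W\bigl(\phi^{\sigma_*-\eps}(u^1),\phi^{\sigma_*-\eps}(u^2)\bigr)>W\bigl(\phi^{\sigma_*+\eps'}(u^1),\phi^{\sigma_*+\eps'}(u^2)\bigr)$ for all $\eps,\eps'\in(0,\eps_0)$; combined with local constancy on the adjacent components this says $W$ strictly drops across $\sigma_*$. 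For arbitrary $\sigma<\tau$ with both values defined, only finitely many points of $A_{(u^1,u^2)}$ lie in $[\sigma,\tau]$ (by discreteness), and chaining the drops across them gives $W\bigl(\phi^\sigma(u^1),\phi^\sigma(u^2)\bigr)\ge W\bigl(\phi^\tau(u^1),\phi^\tau(u^2)\bigr)$.

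The only delicate point is making sure the escape Axioms (A4) and (A5) are actually applicable at the relevant crossing times — that is, that $\bigl(\phi^{\sigma_*}(u^1),\phi^{\sigma_*}(u^2)\bigr)$ is never on the diagonal. This is not an analytic difficulty: it follows immediately from invariance of $\Delta$ and the standing hypothesis $(u^1,u^2)\notin\Delta$. Everything else is routine bookkeeping with discrete ordered subsets of $\R$.
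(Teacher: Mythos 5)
Your argument is correct and follows essentially the same route as the paper's proof: discreteness of $A_{(u^1,u^2)}$ by contradiction via closedness of $\Sigma$, invariance of $\Delta$ and Axiom (A4), then local constancy from Axiom (A1) ($W$ continuous and $\Z$-valued) and strict drops across crossing times from Axiom (A5). Your explicit chaining of the drops over the finitely many crossing points in a compact interval just spells out what the paper leaves implicit.
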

%

\begin{lemma}
\label{lem:not-in-sigma}
Let $u\in X$ and $w\in \omega(u).$ For every $w^1,w^2 \in \clos\bigl(\gamma(w)\bigr)$ with $w^1\not=w^2,$  it holds that
$(w^1,w^2) \not\in \Sigma$.
\end{lemma}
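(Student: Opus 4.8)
The plan is to show that if $w^1,w^2\in\clos(\gamma(w))$ are distinct and $(w^1,w^2)\in\Sigma$, we reach a contradiction with the fact that $W$ is bounded along the orbit of $w$. The starting observation is that $W$ is defined and finite on the pair $(w^1,w^2)$ only outside $\Sigma$; but since $w\in\omega(u)$, the whole orbit closure $\clos(\gamma(w))$ lies in $\omega(u)$, which is an invariant set, so there should be a single integer bounding $|W|$ on all pairs of distinct points of $\clos(\gamma(w))$ that happen to lie outside $\Sigma$. The contradiction will come from producing arbitrarily large drops of $W$ near the putative singular pair.

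First I would recall from Lemma \ref{decreasing lemma} that for any fixed pair $(v^1,v^2)\notin\Delta$, the function $\sigma\mapsto W(\phi^\sigma(v^1),\phi^\sigma(v^2))$ is non-increasing and the singular times form a discrete set; in particular $W$ strictly drops at each singular time by Axiom (\ref{Wdropping}). Next, since $w^1,w^2\in\clos(\gamma(w))$, there are sequences $\sigma_k,\tau_k$ with $\phi^{\sigma_k}(w)\to w^1$ and $\phi^{\tau_k}(w)\to w^2$; using that $\gamma(w)\subset\omega(u)$ and $w\in\omega(u)$, one can in fact realize both $w^1$ and $w^2$ as limits of points on the single orbit $\gamma(w)$. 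The key step is to pick, for each $N$, a pair of times $a_N<b_N$ along the orbit of $w$ such that $\bigl(\phi^{a_N}(w),\phi^{b_N}(w)\bigr)$ is close to a singular pair in $\Sigma\setminus\Delta$ — e.g. close to $(w^1,w^2)$ — and such that between $a_N$ and $b_N$ the orbit passes near $(w^1,w^2)$ at least $N$ times (this is where one uses that $w^1\ne w^2$ lie in the same orbit closure, and the recurrence of $\omega$-limit sets). Each near-passage forces, via Axioms (\ref{thingeneral})–(\ref{Wdropping}) and an openness/continuity argument around the singular pair, a strict drop of $W$; since $W$ is integer-valued and non-increasing, $N$ drops give $W(\phi^{a_N}(w),\cdot)-W(\phi^{b_N}(w),\cdot)\ge N$. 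But both endpoints are near $(w^1,w^2)$, hence (if $(w^1,w^2)\notin\Sigma$, by continuity of $W$) both $W$-values are within a bounded distance of $W(w^1,w^2)$, contradicting $N\to\infty$. To even apply continuity we must know the endpoints avoid $\Sigma$; this is arranged by perturbing $a_N,b_N$ slightly using that singular times are isolated.

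The main obstacle I anticipate is making the ``$N$ near-passages'' step rigorous: one needs that a point $w\in\omega(u)$ whose orbit closure contains two distinct points $w^1,w^2$ has an orbit that returns near a chosen singular configuration infinitely often, and that each return genuinely triggers the strict drop rather than merely grazing $\Sigma$. This requires combining the thinness Axiom (\ref{thingeneral}) (so that near-passages are isolated in time and $W$ is well-defined just before and after) with a careful choice of neighborhoods in $X\times X$ around $(w^1,w^2)$ on which $W$ is constant off $\Sigma$; the compactness and Hausdorff properties of $X$, together with the fact that $\Sigma$ is closed, are what let one set this up. The remaining steps — monotonicity of $W$, discreteness of singular times, continuity of $W$ off the closed set $\Sigma$ — are routine given the axioms and Lemma \ref{decreasing lemma}.
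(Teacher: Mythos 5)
There is a genuine gap in your central step. Your plan hinges on finding, for each $N$, times $a_N<b_N$ such that the orbit of $w$ ``passes near $(w^1,w^2)$ at least $N$ times'' in between, each passage forcing a drop. But the monotonicity you invoke (Lemma \ref{decreasing lemma}) applies to a \emph{fixed} pair under the product flow, i.e.\ to $\sigma\mapsto W\bigl(\phi^{\sigma+c}(w),\phi^{\sigma}(w)\bigr)$ with a fixed time lag $c$, and nothing guarantees that such a fixed-lag pair along $\gamma(w)$ comes near the singular configuration $(w^1,w^2)$ more than approximately once -- or even once. The point $w\in\omega(u)$ need not be recurrent: if, say, $w$ is a connecting orbit with $w^1\in\alpha(w)$ and $w^2\in\omega(w)$, then for any fixed $c$ the pair $\bigl(\phi^{\sigma+c}(w),\phi^{\sigma}(w)\bigr)$ tends to (near-)diagonal configurations as $\sigma\to\pm\infty$ and never revisits a neighborhood of $(w^1,w^2)$; the approximating times for $w^1$ and $w^2$ on $\gamma(w)$ have unbounded, non-constant lags. ``Recurrence of $\omega$-limit sets'' is a property of the orbit of $u$ relative to $\omega(u)$, not of the orbit of $w$ inside $\omega(u)$. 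This is exactly why the paper transfers the argument to the orbit of $u$: one approximates $w^1,w^2$ by $\phi^{s_1}(w),\phi^{s_2}(w)$, picks $\sigma_n\to\infty$ with $\phi^{\sigma_n}(u)\to w$ (so that by flow continuity $\phi^{\sigma_n+s}(u)\to\phi^{s}(w)$ for the finitely many relevant shifts $s$), and then tracks the single fixed-lag pair $\sigma\mapsto W\bigl(\phi^{\sigma+s_1-s_2}(u),\phi^{\sigma}(u)\bigr)$, which returns near the singular configuration at every $\sigma_n$; one drop per return, monotonicity, and the fact that continuity pins the value at the return times $\sigma_n+\eps$ to the fixed integer $W\bigl(\phi^{\eps}(w^1),\phi^{\eps}(w^2)\bigr)$ give the contradiction.

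Two further points. First, your intended contradiction with ``$W$ is bounded along the orbit of $w$'' is not available a priori: $W$ is unbounded in this setting (this is a stated difference from Fiedler--Mallet-Paret), and the constancy of $W$ on $\clos(\gamma(w))\times\clos(\gamma(w))\setminus\Delta$ is Lemma \ref{lemma k}, whose proof \emph{uses} the present lemma, so appealing to it would be circular; what is actually needed is only that one specific value of $W$ at a nonsingular shifted pair is a finite integer. Second, your statement that the endpoint values are ``within a bounded distance of $W(w^1,w^2)$'' is not meaningful under the contradiction hypothesis: $(w^1,w^2)\in\Sigma$, so $W(w^1,w^2)$ is undefined, and $W$ is neither defined nor locally bounded near $\Sigma$ (its values genuinely differ on the two sides of a crossing). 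The repair is the one the paper uses: work with the time-shifted pairs $\bigl(\phi^{\pm\eps}(w^1),\phi^{\pm\eps}(w^2)\bigr)$, which lie off $\Sigma$ by Axiom (\ref{thingeneral}) and across which $W$ strictly drops by Axiom (\ref{Wdropping}), and apply continuity of $W$ only at these nonsingular pairs.
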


\begin{proof} We argue by  contradiction.
Suppose $(w^1,w^2) \in \Sigma\setminus \Delta$, then,
by the Axioms (\ref{thingeneral}) and (\ref{Wdropping}), there exists an $\eps_0>0$ such that 
  $\bigl(\phi^{\sigma}(w^1),\phi^{\sigma}(w^2)\bigr)\not\in\Sigma$
for all $\sigma \in (-\eps_0,\eps_0)\setminus \{0\}$   
  and
  $$
W(\phi^{\sigma}(w^1),\phi^{\sigma}(w^2))>W(\phi^{\sigma'}(w^1),\phi^{\sigma'}(w^2)),
$$ 
for all $\sigma \in (-\eps_0,0)$ and all $\sigma' \in (0,\eps_0)$.
Set $\sigma = -\eps$ and $\sigma' = \eps$, with $0<\eps <\eps_0$.
Since $w^1, w^2\in\clos(\gamma(w)),$ 
there exist $s_1, s_2 \in \R$ such that $\bigl(\phi^{s_1\pm \eps}(w),\phi^{s_2\pm \eps}(w)\bigr) \not \in \Sigma$ and
$\bigl(\phi^{s_1\pm \eps}(w),\phi^{s_2\pm \eps}(w)\bigr)$ is close to 
$\bigl(\phi^{\pm\eps}(w^1),\phi^{\pm\eps}(w^2)\bigr)$.
The continuity of $W$ (Axiom (A1))   implies that 
\begin{equation}\label{dec}
\begin{array}{llllll}
W(\phi^{s_1+\eps}(w),\phi^{s_2+\eps}(w))&=&W(\phi^{\eps}(w^1),\phi^{\eps}(w^2))& & & \\
 &<&W(\phi^{-\eps}(w^1),\phi^{-\eps}(w^2))\\
 &=&W(\phi^{s_1-\eps}(w),\phi^{s_2-\eps}(w)).
\end{array}
\end{equation}
%
Since $\gamma(w)\subset \omega(u)$ is an invariant subset of $\omega(u)$,
the definition of $\omega$-limit set and the continuity of $\phi^{\sigma}$ imply that there exists a sequence $\sigma_n\to \infty$, 
as $n\to \infty$, such that 
\begin{equation}
\label{eqn:conv-3}
\phi^{\sigma_n+ s_1 - s_2  \pm\eps}(u)\to \phi^{s_1\pm\eps}(w)\quad\hbox{and}\quad
 \phi^{\sigma_n\pm\eps}(u)\to \phi^{s_2\pm\eps}(w).
\end{equation}
 Since $\sigma_n$ is divergent  we may assume  
\begin{equation}
\label{eqn:choice-2}
\sigma_{n+1}>\sigma_n+2\eps,\quad \text{ for all } ~n.
\end{equation}
Inequality (\ref{dec}), the convergence in (\ref{eqn:conv-3}), Axiom (A1) (continuity) and the fact that $W$ is locally constant 
(Lemma \ref{decreasing lemma}), imply, for $\sigma_n\to\infty$, that
\begin{eqnarray}
W(\phi^{\sigma_n+s_1-s_2+\eps}(u),\phi^{\sigma_n+\eps}(u))&=&W(\phi^{s_1+\eps}(w),\phi^{s_2+\eps}(w))\nonumber\\
&<&W(\phi^{s_1-\eps}(w),\phi^{s_2-\eps}(w))\nonumber\\
&=&W(\phi^{\sigma_n+s_1-s_2-\eps}(u),\phi^{\sigma_n-\eps}(u)).\nonumber
\end{eqnarray}
Combining the latter with (\ref{eqn:choice-2}) and the fact that $W$ is non-increasing implies that 
$$
W(\phi^{\sigma_{n+1}+s_1-s_2-\eps}(u),\phi^{\sigma_{n+1}-\eps}(u)) <
W(\phi^{\sigma_n+s_1-s_2-\eps}(u),\phi^{\sigma_n-\eps}(u)),
$$
for all $n$.
From this inequality we deduce that $\sigma \mapsto W(\phi^{\sigma+s_1-s_2}(u),\phi^{\sigma}(u))$ has infinitely many jumps and therefore
$$
W(\phi^{\sigma+s_1-s_2}(u),\phi^{\sigma}(u))
\to -\infty,\quad\hbox{as}\quad \sigma \to \infty.
$$
On the other hand, the continuity of $W$  and Equation (\ref{eqn:conv-3}) yield
$$
W(\phi^{\sigma_n+s_1-s_2+\eps}(u),\phi^{\sigma_n+\eps}(u)) = W(\phi^{s_1+\eps}(w),\phi^{s_2+\eps}(w))>-\infty,
$$
as $\sigma_n \to \infty$, 
which is a contradiction.
\end{proof}

\begin{lemma} \label{injectivityofpi1}
Let $u\in X$ and $w\in \omega(u),$ then
$$
\pi\colon \clos\left(\gamma(w)\right)\to \pi\clos(\gamma(w))\subset \R^2
$$ 
is a homeomorphism onto its image. Hence, $\pi\circ\phi^\sigma\circ \pi^{-1}$ is a continuous flow on  $\pi\clos(\gamma(w)).$

\begin{proof}
By Axiom (A2), the projection $\pi\colon \clos\left(\gamma(w)\right)\to \pi\clos(\gamma(w))$ is continuous.
Since $\clos\left(\gamma(w)\right)$ is compact and $ \pi\clos(\gamma(w))$ is Hausdorff, it is sufficient to show that 
$\pi$ is bijective, see  \cite[\S~26, Thm.~26.6]{munkres}.
The projection $\pi $ is   surjective and it remains to show that
 $\pi$ is
injective on $\clos(\gamma(w)).$ 
Suppose   $\pi$ is not injective, 
then there exist
 $w^1, w^2\in\clos(\gamma(w))$, such that $w^1\not=w^2$ and $\pi(w^1)=\pi(w^2).$ 
 Axiom (\ref{piandsigmageneral}) then implies  
that $(w^1,w^2)\in \Sigma\setminus \Delta$. 
On the other hand, Lemma \ref{lem:not-in-sigma} implies that $(w^1,w^2)\not\in \Sigma$,
which is a contradiction. This establishes the injectivity of $\pi$.
\end{proof}
\end{lemma}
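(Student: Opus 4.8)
\textbf{Proof plan for Lemma \ref{injectivityofpi1}.} The plan is to deduce the statement directly from the two previous lemmas together with the point-separation Axiom (\ref{piandsigmageneral}) and the standard fact that a continuous bijection from a compact space to a Hausdorff space is a homeomorphism. First I would observe that $\clos(\gamma(w))$ is a closed subset of the compact space $X$, hence compact, and that its image $\pi\clos(\gamma(w))$ sits inside $\R^2$, which is Hausdorff. By Axiom (\ref{Wandpi}) the restriction of $\pi$ to $\clos(\gamma(w))$ is continuous, and it is tautologically surjective onto $\pi\clos(\gamma(w))$. So by \cite[\S~26]{munkres} it suffices to establish injectivity.

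For injectivity I would argue by contradiction: if $\pi(w^1)=\pi(w^2)$ for some $w^1\ne w^2$ in $\clos(\gamma(w))$, then Axiom (\ref{piandsigmageneral}) forces $(w^1,w^2)\in\Sigma$, and since $w^1\ne w^2$ we have $(w^1,w^2)\in\Sigma\setminus\Delta$. But Lemma \ref{lem:not-in-sigma} says precisely that no pair of distinct points of $\clos(\gamma(w))$ lies in $\Sigma$, a contradiction. Hence $\pi$ is injective, and the compact-to-Hausdorff criterion gives that $\pi$ is a homeomorphism onto its image.

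The final sentence — that $\pi\circ\phi^\sigma\circ\pi^{-1}$ is a continuous flow on $\pi\clos(\gamma(w))$ — then follows formally: $\clos(\gamma(w))$ is invariant under $\phi^\sigma$ (it is the closure of an orbit, and closures of invariant sets are invariant for continuous flows), so $\phi^\sigma$ restricts to a continuous flow there, and conjugating by the homeomorphism $\pi$ transports this to a continuous flow on the image, with the group law and continuity preserved under conjugation.

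\textbf{Main obstacle.} There is essentially no obstacle internal to this lemma: all the real work has been pushed into Lemma \ref{lem:not-in-sigma}, whose proof is the delicate one (it is there that one exploits the unboundedness-from-below structure of $W$, the no-return behavior of jumps along $\omega$-limit orbits, and the divergence of the translation sequence to force $W\to-\infty$, contradicting continuity). Given that lemma, the present statement is a short bookkeeping argument; the only point requiring a line of care is noting that $\Delta\subset\Sigma$ is already assumed, so that "$\pi(w^1)=\pi(w^2)$ with $w^1\ne w^2$" genuinely lands in $\Sigma\setminus\Delta$ where Lemma \ref{lem:not-in-sigma} applies.
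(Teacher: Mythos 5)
Your proposal is correct and follows essentially the same route as the paper: continuity from Axiom (\ref{Wandpi}), the compact-to-Hausdorff bijection criterion, and injectivity by contradiction via Axiom (\ref{piandsigmageneral}) combined with Lemma \ref{lem:not-in-sigma}. Nothing further is needed.
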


For the projected flow on $\pi\clos(\gamma(w))$ we have the following commuting diagram:
\begin{equation}\label{diagramprojflow}
\begin{diagram}
\node{\R\times \clos(\gamma(w))} \arrow{e,l}{\phi^\sigma} \arrow{s,l}{{\rm id} \times \pi}\node{\clos(\gamma(w))} \arrow{s,r}{\pi}\\
\node{\R\times \pi\clos(\gamma(w))} \arrow{e,l}{\psi^\sigma}   \node{\pi\clos(\gamma(w)),} 
\end{diagram}
\end{equation}
where $\psi^\sigma = \pi\circ\phi^\sigma\circ ({\rm id} \times \pi)^{-1}$.
\begin{corollary}
\label{eqtoeqgeneral}
The  equilibria of the planar flow $\psi^\sigma :=\pi\circ\phi^{\sigma}\circ ({\rm id} \times \pi)^{-1}$ 
on $\pi\clos(\gamma(w))$ 
  are in one-to-one correspondence with the equilibria of the   flow $\phi^{\sigma}$ on $\clos(\gamma(w)).$ 
\end{corollary}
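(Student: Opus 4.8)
\textbf{Proof plan for Corollary \ref{eqtoeqgeneral}.}

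The plan is to exploit the commuting diagram \eqref{diagramprojflow} together with the fact, established in Lemma \ref{injectivityofpi1}, that $\pi\colon \clos(\gamma(w))\to \pi\clos(\gamma(w))$ is a homeomorphism conjugating the flow $\phi^\sigma$ to the planar flow $\psi^\sigma$. Since a homeomorphic conjugacy carries the full orbit structure from one flow to the other, it carries equilibria to equilibria; the content of the corollary is essentially just the observation that conjugate flows have corresponding fixed-point sets.

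First I would fix a point $v\in\clos(\gamma(w))$ and set $\bar v := \pi(v)\in\pi\clos(\gamma(w))$. Because $\pi$ is injective on $\clos(\gamma(w))$ by Lemma \ref{injectivityofpi1}, the restriction $({\rm id}\times\pi)^{-1}$ appearing in the definition of $\psi^\sigma$ is well defined on $\R\times\pi\clos(\gamma(w))$, and the diagram \eqref{diagramprojflow} gives $\psi^\sigma(\bar v) = \pi\bigl(\phi^\sigma(\pi^{-1}(\bar v))\bigr) = \pi\bigl(\phi^\sigma(v)\bigr)$ for every $\sigma\in\R$. Now suppose $v$ is an equilibrium of $\phi^\sigma$, i.e. $\phi^\sigma(v)=v$ for all $\sigma$. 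Then $\psi^\sigma(\bar v) = \pi(v) = \bar v$ for all $\sigma$, so $\bar v$ is an equilibrium of $\psi^\sigma$. Conversely, if $\bar v$ is an equilibrium of $\psi^\sigma$, then $\pi\bigl(\phi^\sigma(v)\bigr) = \psi^\sigma(\bar v) = \bar v = \pi(v)$ for all $\sigma$; since $\phi^\sigma(v)\in\clos(\gamma(w))$ (invariance of $\clos(\gamma(w))$) and $\pi$ is injective on $\clos(\gamma(w))$, we conclude $\phi^\sigma(v)=v$ for all $\sigma$, so $v$ is an equilibrium of $\phi^\sigma$. This shows that $v\mapsto \pi(v)$ restricts to a bijection between the two equilibrium sets, which is exactly the claimed one-to-one correspondence.

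I do not expect a genuine obstacle here, since the statement is a formal consequence of the conjugacy; the only points requiring a little care are (a) making sure $({\rm id}\times\pi)^{-1}$ is legitimately used — which is guaranteed precisely by the injectivity in Lemma \ref{injectivityofpi1} — and (b) using the invariance of $\clos(\gamma(w))$ under $\phi^\sigma$ so that the converse direction stays inside the set on which $\pi$ is injective. Both are already available from the material preceding the corollary, so the argument is short.
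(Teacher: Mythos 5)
Your argument is correct and is exactly the reasoning the paper intends: the corollary is an immediate consequence of the conjugacy furnished by Lemma \ref{injectivityofpi1} and the commuting diagram \eqref{diagramprojflow}, with the invariance of $\clos(\gamma(w))$ handling the converse direction just as you note. The paper leaves this as an unproved corollary, so your write-up simply makes the same (intended) argument explicit.
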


Following the natural strategy in proving a Poincar\'{e}-Bendixson type result,
we need to find a transverse curve at a non-equilibrium point and invoke a flow box theorem, ultimately leading to contradiction arguments involving the inside and outside of a Jordan curve made up of a flow line and the transversal. 
Transversals do exist for continuous (but not necessarily smooth) flows in $\R^2$~\cite[section VII.2]{hajek}. However, our flow is defined 
on the closed invariant subset $\pi\clos(\gamma(w))\subset \R^2$.
This set may well have empty interior which prevents us from finding a  section as defined below, and that is also a curve (i.e.\ a so-called transversal).
Roughly speaking, we overcome this difficulty by adapting the usual Jordan curve arguments to a slightly ``less local'' version.

Let $(\sigma,x) \mapsto \psi^\sigma (x)$ be the (local) continuous flow  on the subset $\sD=\pi\clos(\gamma(w))$  of $\R^2$.
A subset $\sC\subset \sD$ is a \emph{section} for $\psi^\sigma$, if there is a $\delta>0$ such that
$$
\psi^{\sigma_1}(\sC)\cap\psi^{\sigma_2}(\sC)=\varnothing, \quad \text{ for all } 0\leq \sigma_1< \sigma_2\leq \delta.
$$
The following lemma shows that for non-equilibrium points $x \in \sD$ there exists a section for the flow in an $\eps$-neighborhood of $x$.  
\begin{lemma}\label{sectionsnoneq}
Let $x\in \sD$ be  a non-equilibrium point of  $\psi^\sigma$.  Then,
\begin{enumerate}
\item[(i)]
for sufficiently small $\delta>0$
there exists a section $\sC$ containing $x$
such that the set 
$$
\sU:=\left\{\psi^\sigma (y) : y \in \sC, \sigma \in\left[-\delta,\delta\right] \right\}
$$
is homeomorphic to $\sC \times [-\delta,\delta]$ via the map $\psi$,
and for $\eps>0$ sufficiently small
\begin{itemize}
\item $B_\eps(x) \cap \sD \subset \sU$,
\item $h_\sigma(\overline{B_\eps(x)} \cap \sU) \in (-\delta,\delta)$.
\end{itemize} 
where $h_\sigma$ is the second components of the inverse homeomorphism $h: \sU \to \sC \times [-\delta,\delta]$, i.e., $h \circ \psi^\sigma(y) = (y,\sigma)$ for all 
$y \in \sC, \sigma \in\left[-\delta,\delta\right]$;
\item[(ii)]
for $\delta_0 < \delta$ sufficiently small  the three balls 
$B^{0} \equiv B_{\eps_0}(x)$, 
$B^{-} \equiv B_{\eps_0}(\psi^{-\delta_0}(x))$ and
$B^{+} \equiv B_{\eps_0}(\psi^{\delta_0}(x))$
are, for $\eps_0 < \eps$ sufficiently small, disjoint subsets of $B_{\eps}(x)$ such that 
$h_\sigma(B^-\cap\sU) < -\frac{\delta_0}{2} < h_\sigma(B^0\cap\sU)  < \frac{\delta_0}{2} <h_\sigma(B^+\cap\sU)$.
Furthermore, for $\eps_1 <\eps_0$ sufficiently small, we have
$\psi^{\pm\delta_0}(y) \in B^{\pm}$ for all $y \in \sC_0 \equiv \sC \cap B_{\eps_1}(x)$. 
\end{enumerate}
\end{lemma}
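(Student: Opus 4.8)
The plan is to obtain the flow-box structure of (i) from the classical construction of local sections for continuous flows, applied to $\psi^\sigma$ on the compact metric space $\sD=\pi\clos(\gamma(w))\subset\R^2$, and then to read off all the metric statements in (i) and (ii) by successively shrinking the radii that occur, using only continuity of the flow and of the time-coordinate $h_\sigma$. Since $x$ is not an equilibrium of $\psi^\sigma$, the stabiliser $\{\sigma\in\R:\psi^\sigma(x)=x\}$ is a proper closed subgroup of $\R$, hence is $\{0\}$ or $\tau_*\Z$ with $\tau_*>0$; in particular $\psi^\sigma(x)\ne x$ whenever $0<|\sigma|\le\delta_1$, for some $\delta_1>0$. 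The standard section theorem for continuous flows therefore applies at $x$: this is precisely the local construction underlying \cite[section VII.2]{hajek}, which is afterwards refined there to a transversal \emph{curve}, but the section and its flow box are produced by an argument that never uses that the ambient space has non-empty interior, and so it survives the passage from $\R^2$ to $\sD$. It produces, for every sufficiently small $\delta\in(0,\delta_1)$, a closed section $\sC\ni x$ for $\psi^\sigma$ such that
$$
\sC\times[-\delta,\delta]\ \xrightarrow{\ \Psi\ }\ \sU,\qquad \Psi(y,\sigma)=\psi^\sigma(y),
$$
is a homeomorphism onto a neighbourhood $\sU$ of $x$ in $\sD$ (with $\sU$ the set named in the statement), whose inverse is the map $h=(h_\sC,h_\sigma)\colon\sU\to\sC\times[-\delta,\delta]$. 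Establishing this --- verifying that the classical section/flow-box machinery really goes through on the possibly interior-free set $\sD$, where the usual transversal curve may fail to exist --- is the step I expect to be the main obstacle; everything afterwards is routine.

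Granting the flow box, the rest of (i) is immediate. Since $x\in\sC$ we get $h(x)=\Psi^{-1}(x)=(x,0)$, so $h_\sigma(x)=0$. As $\sU$ is a neighbourhood of $x$ in $\sD$, there is $\bar\eps>0$ with $B_{\bar\eps}(x)\cap\sD\subset\sU$, which is the first bullet for every $\eps\le\bar\eps$. Finally, continuity of $h_\sigma$ on $\sU$ together with $h_\sigma(x)=0$ gives, after a further shrinking of $\eps$, that $h_\sigma\bigl(\overline{B_\eps(x)}\cap\sU\bigr)\subset(-\tfrac{\delta}{2},\tfrac{\delta}{2})\subset(-\delta,\delta)$, which is the second bullet.

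For (ii) I keep $\delta,\sC,\sU,h,\eps$ as just produced. For $0<\delta_0<\delta$ the points $\psi^{-\delta_0}(x)$, $x$, $\psi^{\delta_0}(x)$ are the $\Psi$-images of the distinct points $(x,-\delta_0)$, $(x,0)$, $(x,\delta_0)$, hence are themselves distinct, and $\psi^{\pm\delta_0}(x)\to x$ as $\delta_0\to0$ by continuity of the flow; I therefore first fix $\delta_0\in(0,\delta)$ small enough that $d(\psi^{\pm\delta_0}(x),x)<\eps/2$. The three distinct centres $x,\psi^{-\delta_0}(x),\psi^{\delta_0}(x)$ then have pairwise disjoint open balls $B^0=B_{\eps_0}(x)$ and $B^\pm=B_{\eps_0}(\psi^{\pm\delta_0}(x))$, all contained in $B_\eps(x)$, as soon as $\eps_0$ is smaller than both $\eps/2$ and half the least of the three mutual distances of the centres. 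Moreover $\psi^{\pm\delta_0}(x)=\Psi(x,\pm\delta_0)\in\sU$ with $h_\sigma(\psi^{\pm\delta_0}(x))=\pm\delta_0$, so continuity of $h_\sigma$ on $\sU$ and one more shrinking of $\eps_0$ yield $h_\sigma(B^0\cap\sU)\subset(-\tfrac{\delta_0}{2},\tfrac{\delta_0}{2})$, $h_\sigma(B^-\cap\sU)\subset(-\tfrac{3\delta_0}{2},-\tfrac{\delta_0}{2})$ and $h_\sigma(B^+\cap\sU)\subset(\tfrac{\delta_0}{2},\tfrac{3\delta_0}{2})$, which is exactly the displayed chain of inequalities. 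Finally, $\psi^{\pm\delta_0}$ is defined on all of $\sC$ (because $\delta_0<\delta$), the map $y\mapsto\psi^{\pm\delta_0}(y)$ is continuous, and $\psi^{\pm\delta_0}(x)$ is the centre of the open ball $B^\pm$; hence there is $\eps_1\in(0,\eps_0)$ with $\psi^{\pm\delta_0}(y)\in B^\pm$ for every $y\in\sC_0=\sC\cap B_{\eps_1}(x)$, completing (ii).
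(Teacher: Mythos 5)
Your proposal is correct and takes essentially the same route as the paper: the flow-box/section structure in (i) is obtained by invoking the classical local-section construction for continuous flows (Hájek; the paper cites section VI.2, on local sections, rather than the planar transversal theory of section VII.2), and the remaining statements in (i) and (ii) are deduced, exactly as you do, from continuity of $\psi$ and of the inverse homeomorphism $h$ by successively shrinking $\eps$, $\delta_0$, $\eps_0$, $\eps_1$. Your explicit continuity/shrinking details merely spell out what the paper's one-line second step leaves implicit.
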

\begin{proof}
The first part follows from the construction of sections in 
\cite[section VI.2]{hajek}. The second part then follows from continuity of $\psi$ and its inverse $h$.
\end{proof}

The situation described by Lemma~\ref{sectionsnoneq} is illustrated in Figure~\ref{f:canonicdomain}.
\begin{figure}[t]
\centerline{\includegraphics[width=0.7\textwidth]{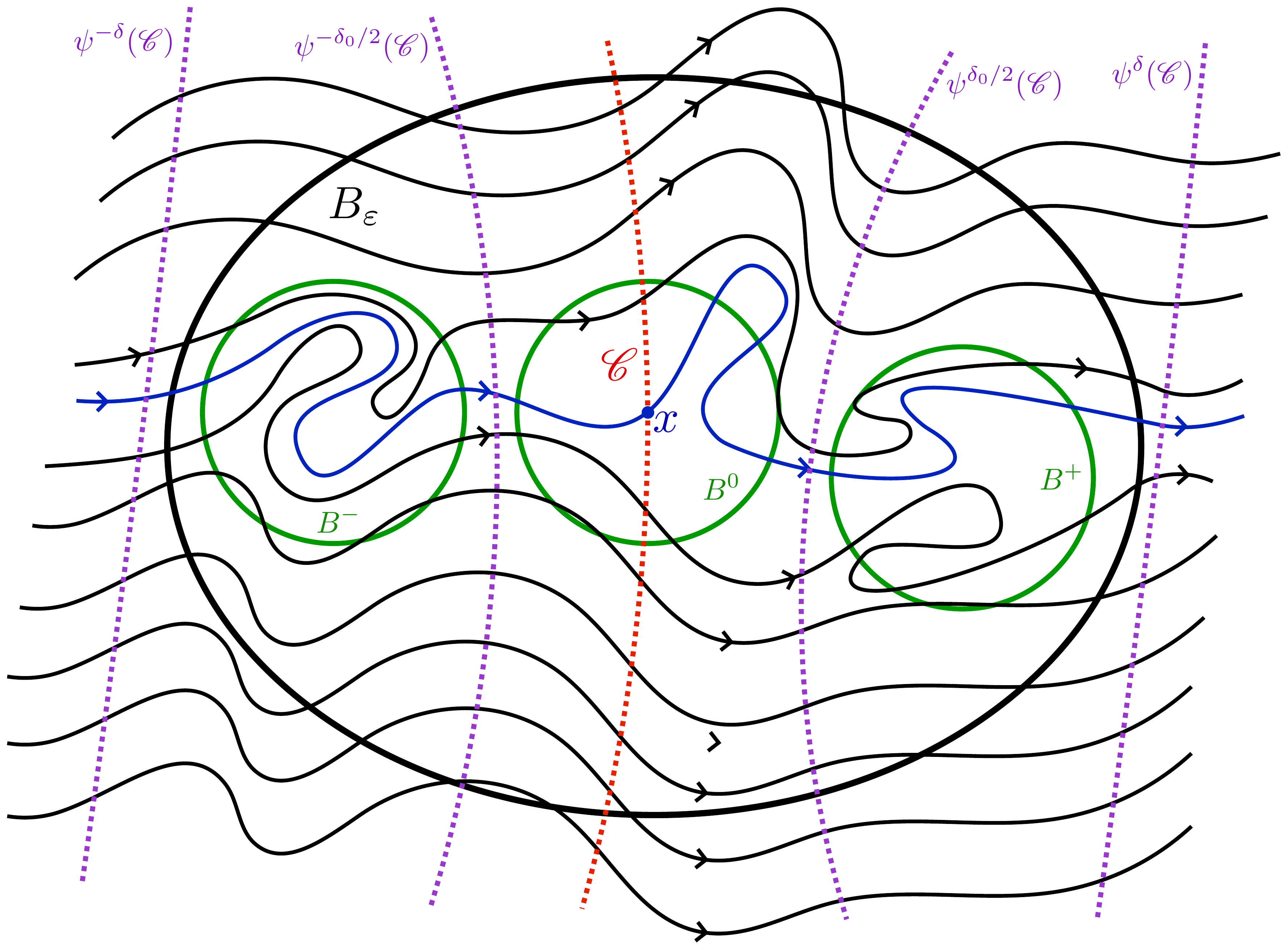}}
\caption{Sketch of the flow in $\sU$ (and the subset $B_\eps(x) \cap \sD$)
through the section $\sC$. The time section $\sigma=\pm\delta_0/2$ separate the
balls $B^0$ and $B^\pm$. We note that the section $\sC$ (and its forward and backward translates in time) are not (necessarily) curves.}
\label{f:canonicdomain}
\end{figure}

\begin{remark} \label{rmk:restrictionFBT}
{\em
In fact, as we will see later, we need to apply a variant of the above lemma
to closed, forward invariant  subsets of the form 
$$
\clos(\gamma(w)\cup\{\phi^{\sigma}(u),\sigma\ge \sigma_*\}),
$$
where $u\in X, w\in\omega(u),\sigma_*\in\R.$ On $\clos(\gamma(w)\cup\{\phi^{\sigma}(u),\sigma\ge \sigma_*\})$ we have a commuting diagram similar to (\ref{diagramprojflow}). In order to have a bi-directional local flow, we define the slightly smaller set
\begin{equation}\label{sV}
\sV:=\pi\clos(\gamma(w)\cup\{\phi^{\sigma}(u),\sigma\ge \sigma_*+2\delta\}),
\end{equation}
for $\delta>0$ small.
Then, if $x\in\sV$ is not an equilibrium for $\psi^{\sigma},$
Lemma \ref{sectionsnoneq} continues to hold with $\sU$ replaced by $\sV$.
}
\end{remark}

\begin{remark}\label{r:J0}
{\em
(i)
The second part of Lemma~\ref{sectionsnoneq} is used to construct a set that replaces
the role of the transversal. Let $y_1$ and $y_2$ be two points in $\sC_0$.
Consider the line segment $L_0$ connecting $y_1$ and
$y_2$. Then $L_0 \subset B^0$. It may happen that
$L_0$ intersects the flow lines of $\psi^{\sigma}(y_{1,2})$ at some $\sigma \neq
0$, but this can be overcome by slightly varying~$\sigma$. Indeed, let the line segment $\ell_0$ be a subset  of
$L_0$ with endpoints $\psi^{\sigma^0_1}(y_1)$ and $\psi^{\sigma^0_2}(y_2)$ for some
$\sigma^0_1,\sigma^0_2\in (-\delta_0/2,\delta_0/2)$, such that $\ell_0$ does not intersect the flow
lines $\psi^{\sigma}(y_{1})$ and $\psi^{\sigma}(y_{2})$ at any other $\sigma \in [-\delta,\delta]$. We still have $\ell_0 \subset B^0$, see Figure~\ref{f:thethreeJs}.

We repeat this
construction in the balls $B^{-}$ and $B^{+}$ to obtain line segments $\ell_-$ and $\ell_+$, respectively, with one end point on each flow line and no other intersections with the flow lines.

Then we obtain three Jordan curves
\begin{alignat*}{1}
  \sJ_0 &= \{ \psi^\sigma (y_1) : \sigma^-_1 \leq \sigma \leq \sigma^+_1 \} \cup 
  \{ \psi^\sigma (y_2) : \sigma^-_2 \leq \sigma \leq \sigma^+_2 \} \cup \ell_- \cup \ell_+ ,\\
  \sJ_{-} &= \{ \psi^\sigma (y_1) : \sigma^-_1 \leq \sigma \leq \sigma^0_1 \} \cup 
  \{ \psi^\sigma (y_2) : \sigma^-_2 \leq \sigma \leq \sigma^0_2 \} \cup \ell_- \cup \ell_0, \\
  \sJ_{+} &= \{ \psi^\sigma (y_1) : \sigma^0_1 \leq \sigma \leq \sigma^+_1 \} \cup 
  \{ \psi^\sigma (y_2) : \sigma^0_2 \leq \sigma \leq \sigma^+_2 \} \cup \ell_0 \cup \ell_+.
\end{alignat*}
in $B_\eps(x)$, see Figure~\ref{f:thethreeJs}.
We denote the interior of $\sJ_j$ by $J_j$, and its exterior by $J_j^*$,
$j \in \{-,0,+\}$. Clearly, $J_\pm \subset J_0$ and 
$J_- \cap J_+ =\varnothing$. 

\begin{figure}
\centerline{\includegraphics[width=0.5\textwidth]{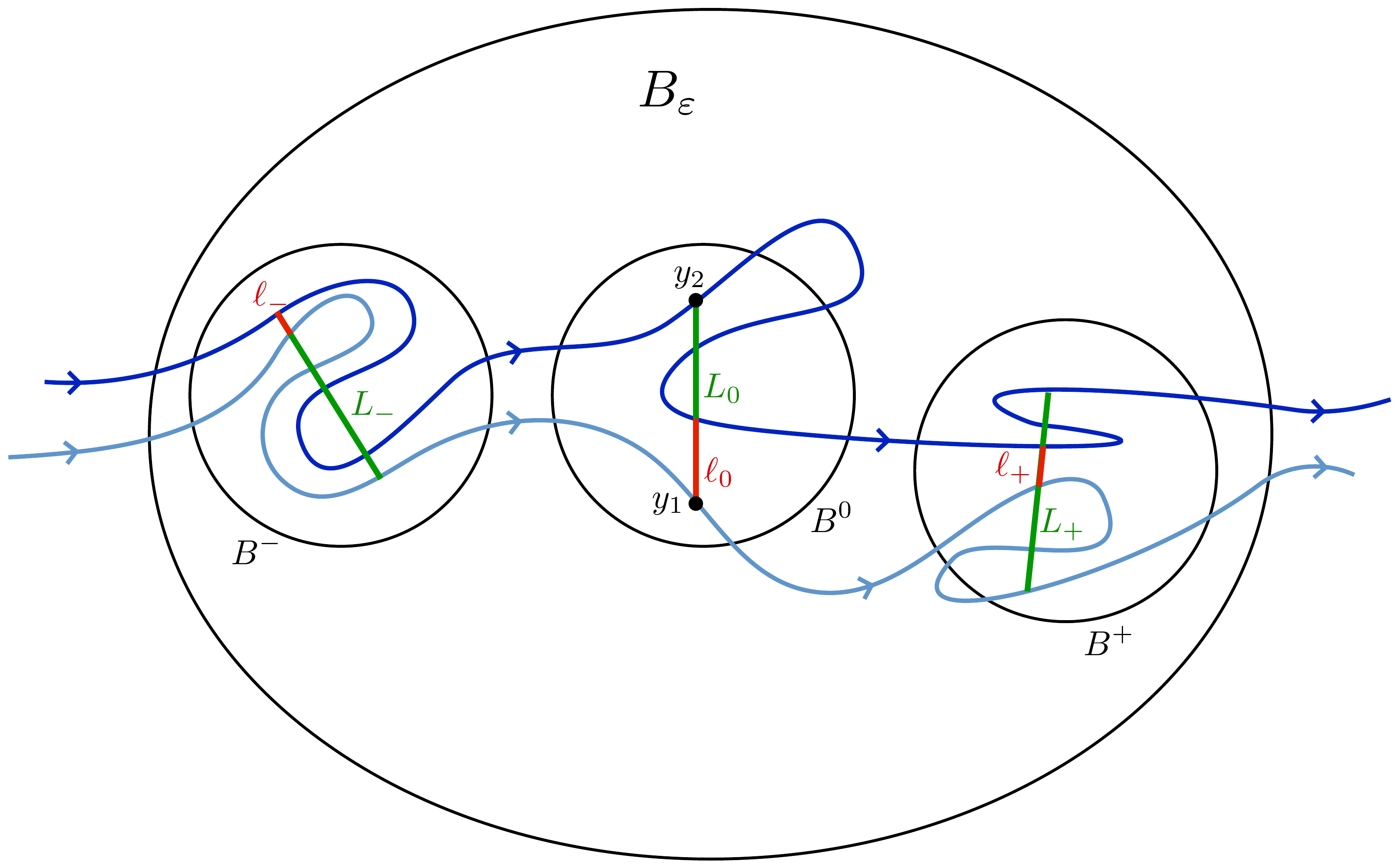}\hspace*{5mm}\raisebox{0.9cm}{\includegraphics[width=0.37\textwidth]{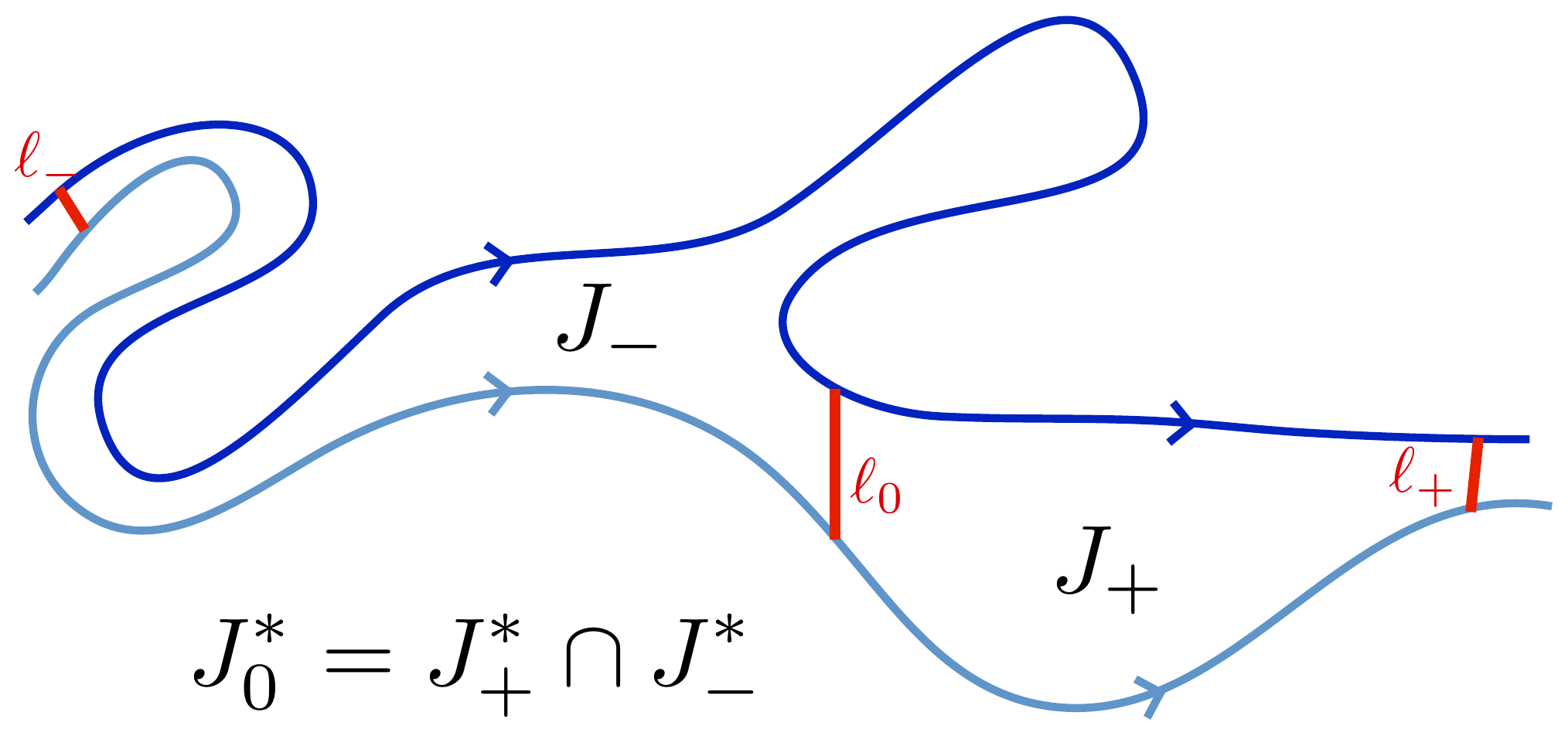}}}
\caption{On the left an illustration of the construction of $\ell_0$ and $\ell_\pm$. On the right a sketch of the interiors $J_0$ and $J_\pm$
of the Jordan curves $\sJ_0$ and $\sJ_\pm$ (as well as their exteriors 
$J_0^*$ and $J_\pm^*$). Note that $J_0 = \interior (\overline{J_- \cup J_+})$.}
\label{f:thethreeJs}
\end{figure}

(ii)
By Lemma~\ref{sectionsnoneq} any flow line in $J_0$ must leave $J_0$ in forward and backward time. By flow invariance of the other boundary components, a flow line can only enter or leave $J_0$ through $\ell_+$ or $\ell_-$.
Moreover, \emph{no} flow line can (in forward time) enter $J_0$ through $\ell_+$ and then leave it through $\ell_-$.
In this sense, the set $J_0$ plays the role of a transversal.
Analogous statements holds for $J_+$ and $J_-$. 
In particular, this implies a slightly stronger statement for the flow in $J_+ \cap J_0$:
if a flow line is in $J_+$ then must leave $J_0$ through $\ell_+$.
Similarly, if a flow line is in $J_-$ then must have entered $J_0$ through $\ell_-$.

(iii)
The flow lines $\{ \psi^{\sigma} (y_{1,2}) : \sigma \in (\sigma^+_{1,2},\delta] \}$
and $\{ \psi^{\sigma} (y_{1,2}) : \sigma \in [-\delta,\sigma^-_{1,2}) \}$
lie in the exterior $J_0^*$. This follows from the fact that by construction they cannot cross $\ell_+$ and $\ell_-$, respectively, and $\psi^{\pm\delta}(y_{1,2})$ all lie in $J_0^*$ by the second bullet of Lemma~\ref{sectionsnoneq}(i). 
}
\end{remark}

\begin{proof}[Proof of Proposition \ref{sv}]
Suppose $\omega(w)$ does not contain any equilibria.
Choose $\zeta\in \omega(w)$ and   $\zeta^*\in \omega (\zeta)$, then 
\begin{equation}
\label{eqn:inclusions}
\omega(\zeta)\subseteq\omega(\omega(w)) = \omega(w) \subseteq \omega(\gamma(w)) = \clos(\gamma(w)).
\end{equation}
Since
  $\zeta^*$ is not an equilibrium,  then
 $\pi (\zeta^*)$ is    not an equilibrium for   $\psi^\sigma = \pi\circ \phi^\sigma \circ ({\rm id}\times \pi)^{-1}$ by  Corollary \ref{eqtoeqgeneral}. 
According to Lemma \ref{sectionsnoneq} there exists a section $\sC$ for  $\psi^\sigma$
through $x= \pi (\zeta^*)$.
Since $\zeta^*\in \omega(\zeta)$  there exist times $\sigma_n \to \infty$ such that $\phi^{\sigma_n} (\zeta) \to \zeta^*$.
By Lemma \ref{sectionsnoneq} these times can be chosen such that $\pi\circ\phi^{\sigma_n} (\zeta)\in \sC_0$, as defined in Lemma~\ref{sectionsnoneq}(ii),
for $\sigma_n$ sufficiently large. Moreover, $\pi\circ\phi^{\sigma}(\zeta)\not\in \sC_0$ for $\sigma\in(\sigma_n,\sigma_{n+1}).$ 
We consider two  cases.

{\it Case 1}. For some $n\not = n'$, we have $\pi\circ\phi^{\sigma_n} (\zeta) = \pi\circ\phi^{\sigma_{n'}} (\zeta)$.
Then, since $\pi$ is a homeomorphism on $\clos\bigl(\gamma(w)\bigr)$
(see Lemma \ref{injectivityofpi1}) and since $\omega(\zeta) \subset \clos\bigl(\gamma(w)\bigr)$ (see Equation (\ref{eqn:inclusions})),  it follows that $\phi^{\sigma_n} (\zeta) = \phi^{\sigma_{n'}} (\zeta)$, and thus $\phi^\sigma(\zeta)$ is a periodic orbit. 

{\it Case 2}. All $\pi\circ\phi^{\sigma_n} (\zeta)$ are mutually distinct.
Take $n$ sufficiently large so that $y_1 \equiv \pi\circ\phi^{\sigma_{n+1}} (\zeta)$ and $y_2 \equiv \pi\circ\phi^{\sigma_{n+1}} (\zeta)$ both lie in $\sC_0$. Denote $\wsigma = \sigma_{n+1}-\sigma_n$, so that $y_2=\psi^{\wsigma}(y_1)$. Apply the construction of Remark~\ref{r:J0}(i) to these $y_1$ and $y_2$.
In addition to $\sJ_0$ we obtain two more Jordan curves
\begin{alignat*}{1}
	\sG_- &= \{  \psi^\sigma (y_1) : \sigma^-_1 \leq \sigma \leq \wsigma + \sigma^-_2 \} \cup \ell_- \\
	\sG_+ &= \{  \psi^\sigma (y_1) : \sigma^+_1 \leq \sigma \leq \wsigma + \sigma^+_2 \} \cup \ell_+ .
\end{alignat*}
Both curves separate $\R^2$ into two open sets, say $A_{\pm}^1$ and $A_{\pm}^2$, see Figure~\ref{f:Apm}. Here, to fix notation, 
we require that $J_0 \subset A_{+}^1$ and $J_0 \subset A_{-}^2$ (recall that $J_0$ is the interior of~$\sJ_0$) so that $A_+^2 \cap A_-^1 =\varnothing$.
It follows from the property of $J_0$ described in Remark~\ref{r:J0}(ii)
and flow invariance of 
$\{  \psi^\sigma (y_1) : \sigma^-_1 \leq \sigma \leq \wsigma + \sigma^+_2 \} $
that once a flow line is in $A_+^2$ it can never enter $A_-^1$ (in forward time).

\begin{figure}[t]
\centerline{\includegraphics[width=6.5cm]{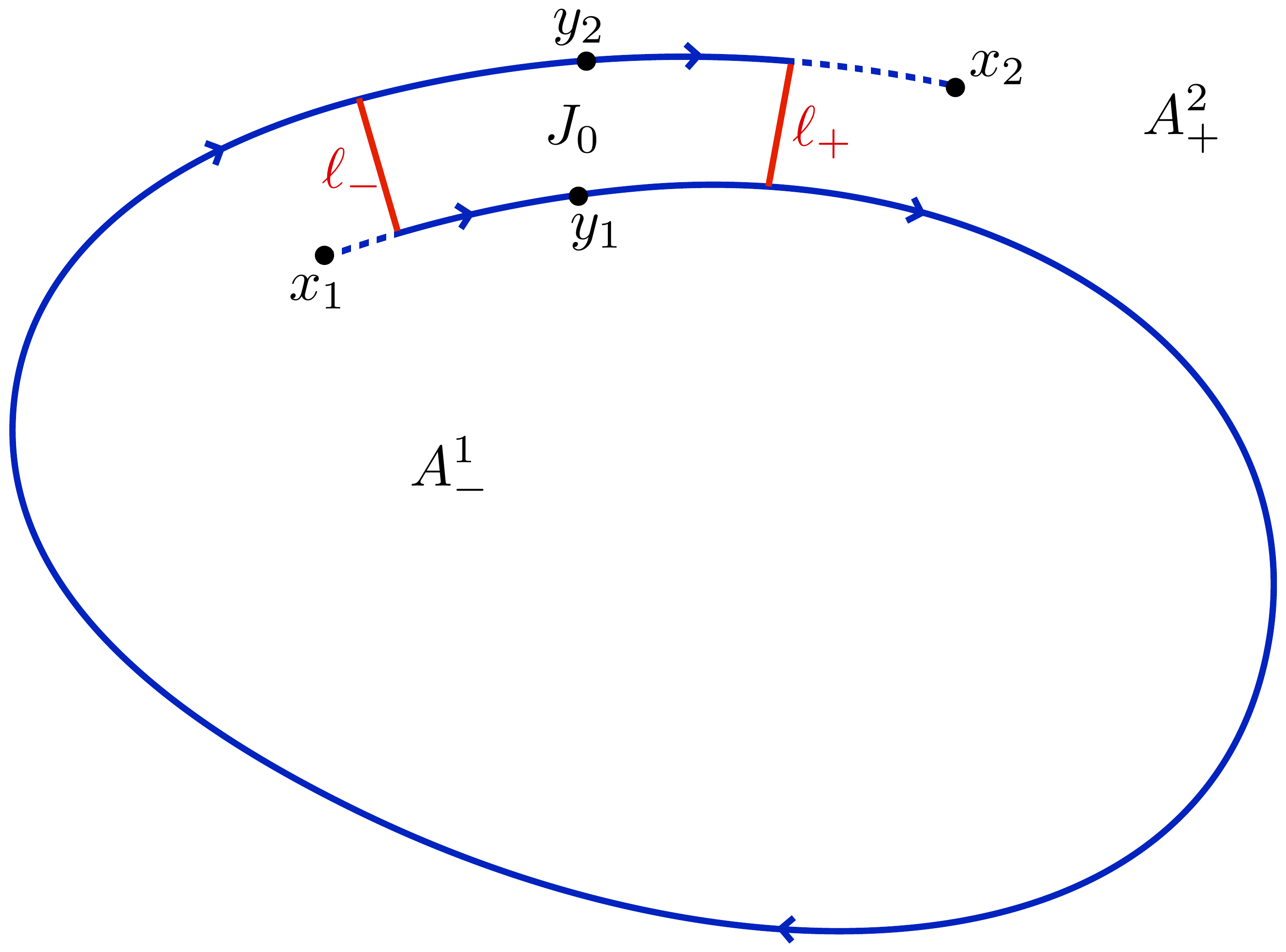}}
\caption{Sketch of the construction of $A_+^2$ and $A_-^1$. Whether $A_-^1$ is a bounded region and $A_+^2$ an unbounded one (as depicted here) or the other way around, is irrelevant for the argument.}
\label{f:Apm}
\end{figure}

Finally, we note that Remark~\ref{r:J0}(iii) implies that 
$x_1=\psi^{-\delta}(y_1) = \pi\circ\phi^{\sigma_{n}-\delta} (\zeta) $ lies in $A_-^1$,
while $x_2=\psi^{\delta}(y_2) = \pi\circ\phi^{\sigma_{n+1}+\delta} (\zeta) $ lies in $A_+^2$.
Now consider the orbit $\pi\circ\phi^\sigma(w)$. 
Since $\zeta\in\omega(w)$ and $\pi$ is continuous, $\pi(\zeta)$ is an $\omega$-limit point of $\pi(w)$ under $\psi^{\sigma}$.
Consequently, the orbit  $\pi\circ\phi^{\sigma}(w)$ 
 keeps (in forward time) visiting arbitrarily small neighborhoods of $x_1 \in A_-^1$ and $x_2 \in A_+^2$. However, as argued above, once a flow line is in $A_+^2$ it can never enter $A_-^1$, which is a contradiction.
\end{proof}

\begin{remark}
\label{rmk:new}
{\em
In \cite[Proposition 2]{fiedlermallet} the ``soft version'' was proved using both smoothness of the flow and fact that there exists a non-negative  discrete Lyapunov function.
The extension given by Proposition \ref{softversion} makes it applicable to the Cauchy-Riemann equations, for which a $\Z$-valued Lyapunov function exists.
}
\end{remark}

\section{The strong version}\label{sec:strvers}
The first subsection contains preliminary lemmas that are used to prove the strong version of the Poincar\'{e}-Bendixson Theorem. The proof of Proposition \ref{pr2}   is carried out in the second subsection.
The arguments in this section resemble those  in \cite{fiedlermallet}, but are adjusted to our setting.

\subsection{Technical lemmas}\label{sectiontechnicallemmas}
\begin{lemma}\label{lemma k} 
Let $u \in X,$
then for every $w\in \omega(u)$ there exists an integer $k(w)$ such that
$$
W(w^1,w^2)=k(w),
$$
for all $w^1, w^2 \in\clos\bigl(\gamma(w)\bigr)$, with $w^1\not=w^2.$
\end{lemma}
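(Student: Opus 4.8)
The plan is to show that the continuous, $\Z$-valued function $W$ is in fact constant on the set of all admissible pairs drawn from $\clos(\gamma(w))$. The key input is Lemma~\ref{lem:not-in-sigma}, which guarantees that no pair $(w^1,w^2)$ with $w^1\neq w^2$ and $w^1,w^2\in\clos(\gamma(w))$ lies in $\Sigma$; hence $W$ is well defined on the entire set
$$
\Delta_w := \bigl(\clos(\gamma(w))\times\clos(\gamma(w))\bigr)\setminus\Delta.
$$
I would first argue that $\Delta_w$ is connected. Indeed, $\clos(\gamma(w))$ is the closure of a single orbit, hence connected (it is the continuous image of $\R$ together with its limit sets, which are connected and nonempty as noted in Section~\ref{sectionmainresult}); therefore $\clos(\gamma(w))\times\clos(\gamma(w))$ is connected. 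Removing the diagonal $\Delta$ from a product of a connected space with itself leaves a connected set whenever that space has more than one point and is, say, not disconnected by a single point — and here $\clos(\gamma(w))$ is either a single point (in which case the statement is vacuous, as there are no admissible pairs) or an infinite connected set, for which $(\clos(\gamma(w)))^2\setminus\Delta$ is path-connected.

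Granting connectedness of $\Delta_w$, the argument finishes quickly: by Axiom~(\ref{continuityofW}), $W$ restricted to $\Delta_w$ is continuous with values in the discrete space $\Z$, so it is constant on the connected set $\Delta_w$. Call this constant value $k(w)$; then $W(w^1,w^2)=k(w)$ for all $w^1,w^2\in\clos(\gamma(w))$ with $w^1\neq w^2$, which is precisely the claim. The integer $k(w)$ depends only on $w$ (more precisely, only on $\clos(\gamma(w))$, hence is constant along the orbit of $w$).

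\textbf{Main obstacle.} The only nontrivial point is the connectedness of $\Delta_w=(\clos(\gamma(w)))^2\setminus\Delta$. If $\clos(\gamma(w))$ were, for instance, a single arc it would be clear, but a priori it is merely a compact connected metrizable space, possibly with complicated local structure. The cleanest way around this is to avoid removing the whole diagonal at once: take any two admissible pairs $(a,b)$ and $(a',b')$ in $\Delta_w$ and connect them by a path in $\clos(\gamma(w))^2$ that stays off $\Delta$. Since $\clos(\gamma(w))$ is connected and compact Hausdorff — in fact metrizable, being a subspace of $X\cong\sX\subset C^0(S^1;\R^2)$ — one can use that a connected, locally connected compact metric space is path-connected (or simply exploit that $\clos(\gamma(w))$ is the closure of a continuous curve $\sigma\mapsto\phi^\sigma(w)$ together with its two limit sets, each of which is path-joined to the curve through convergent sequences). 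Alternatively, and most safely, one sidesteps the topology entirely: for any $w^1\neq w^2$ in $\clos(\gamma(w))$, approximate both by points on the orbit, $\phi^{\sigma^1_n}(w)\to w^1$ and $\phi^{\sigma^2_n}(w)\to w^2$; by Lemma~\ref{lem:not-in-sigma} the pairs $(\phi^{\sigma^1_n}(w),\phi^{\sigma^2_n}(w))$ eventually avoid $\Sigma$, and by continuity of $W$ one has $W(w^1,w^2)=W(\phi^{\sigma^1_n}(w),\phi^{\sigma^2_n}(w))$ for $n$ large; then Lemma~\ref{decreasing lemma} (monotonicity and local constancy of $\sigma\mapsto W(\phi^{\sigma+\tau}(w),\phi^\sigma(w))$, combined with the fact that this function is bounded, since it never crosses $\Sigma$ along the whole orbit by Lemma~\ref{lem:not-in-sigma}, hence constant) forces all these values to coincide with a single integer independent of the chosen pair. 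I expect to present this last variant, as it relies only on lemmas already established and requires no delicate point-set topology.
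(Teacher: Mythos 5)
Your first route contains a false topological claim: for a connected space $Y$ with more than one point, $Y\times Y\setminus\Delta$ need \emph{not} be (path-)connected. This fails exactly in the situation at hand: when $\sigma\mapsto\phi^\sigma(w)$ is injective and $\clos(\gamma(w))$ is homeomorphic to a closed arc (e.g.\ a heteroclinic orbit together with its two limiting equilibria), the square minus the diagonal has precisely two connected components, corresponding to $\sigma_1>\sigma_2$ and $\sigma_1<\sigma_2$. This is why the paper's proof splits into two cases — the periodic case, where $\gamma(w)\times\gamma(w)$ is a torus and the complement of the diagonal \emph{is} connected, and the injective case, where the two components are bridged by invoking the \emph{symmetry} of $W$ from Axiom (A1), an ingredient you never use — and then passes to the closure by continuity, just as you do at the end.

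Your "safest" fallback variant inherits the same gap at its decisive step. Lemmas \ref{lem:not-in-sigma} and \ref{decreasing lemma} indeed give, for each fixed $\tau\neq 0$, that the pair $(\phi^{\sigma+\tau}(w),\phi^\sigma(w))$ never meets $\Sigma$ and hence that $\sigma\mapsto W(\phi^{\sigma+\tau}(w),\phi^\sigma(w))$ is constant; but this constant is a priori a function of $\tau$, and constancy in $\sigma$ at fixed $\tau$ does not "force all these values to coincide" for different pairs, whose time separations $\tau$ differ. To remove the $\tau$-dependence you need, in addition, that $\tau\mapsto W(\phi^\tau(w),w)$ is continuous and $\Z$-valued on $\{\tau>0\}$ and on $\{\tau<0\}$ (hence constant on each half-line, since by Lemma \ref{lem:not-in-sigma} these pairs avoid $\Sigma$), and then an identification of the two half-line constants, e.g.\ via symmetry together with the $\sigma$-shift you already have: $W(\phi^{-\tau}(w),w)=W(w,\phi^{\tau}(w))=W(\phi^{\tau}(w),w)$. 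With this supplement (and your closure-by-continuity step, which is correct and matches the paper's), the argument closes; as written, it does not, and the missing point is exactly the one the paper's two-case analysis and the symmetry axiom are there to handle.
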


\begin{proof}[Proof (cf.\ \cite{fiedlermallet}, Lemma 3.1).]
Since we consider two distinct 
$w^1, w^2\in \clos\bigl(\gamma(w)\bigr)$, we may exclude the case that $w$ is an equilibrium. We 
therefore distinguish two cases: (i) $\gamma(w)$ is a periodic orbit, or (ii) $\sigma \mapsto\phi^\sigma(w)$ is injective.
Lemma \ref{lem:not-in-sigma} implies that $(w^1,w^2) \not \in \Sigma$, and therefore $(w^1,w^2) \mapsto W(w^1,w^2)$ is
a continuous $\Z$-valued function on $\bigl(\clos(\gamma(w)) \times \clos(\gamma(w))\bigr)\setminus \Delta$.

(i) If $\gamma(w)$ is a periodic orbit, then,  $\clos(\gamma(w))=\gamma(w)$, which is homeomorphic to $S^1,$ and $\gamma(w)\times\gamma(w)$ is thus homeomorphic to the 2-torus $\mathbb{T}^2$. 
 Therefore $(w^1,w^2) \mapsto W(w^1,w^2)$ induces 
a continuous $\Z$-valued function on $\mathbb{T}^2 \setminus S^1$. Since the latter is connected, it follows that $W$ is constant
on $\bigl(\gamma(w)\times\gamma(w)\bigr)\setminus \Delta$.

(ii) If $\sigma\to \phi^\sigma(w)$  is injective, then $\bigl(\gamma(w) \times \gamma(w)\bigr)\setminus \Delta$
  has two connected components given by $(\phi^{\sigma_1}(w),\phi^{\sigma_2}(w))$, with $\sigma_1>\sigma_2$, and $\sigma_1<\sigma_2,$ respectively. Since $W$ is symmetric (Axiom (A1)) we conclude that $W$ is constant on $\bigl(\gamma(w) \times \gamma(w)\bigr)\setminus \Delta$.
Note that $\bigl(\clos(\gamma(w)) \times \clos(\gamma(w))\bigr)\setminus \Delta$ is the closure of $\bigl(\gamma(w) \times \gamma(w)\bigr)\setminus \Delta$ in $(X\times X)\setminus \Delta$.
Since $W$ is continuous on
 $\bigl(\clos(\gamma(w)) \times \clos(\gamma(w))\bigr)\setminus \Delta$, it is also constant, which proves the lemma.
\end{proof}

\begin{lemma}\label{lemmak1}
Assume that $u\in X$ and $w\in \omega (u).$ Let $k(w)$ be defined as in Lemma \ref{lemma k}.
 If $\alpha(w)\cap\omega(w)=\varnothing$, then there exists a $\sigma_*\geq 0$, such that
\begin{equation}\label{kappavzero}
W(u^1,w^1)=k(w)
\end{equation}
for every $u^1\in \clos\{\phi^\sigma(u), \sigma\ge \sigma_*\}$ and every $w^1\in \clos (\gamma(w))$, such that $u^1\not=w^1.$ In particular, if $\pi(u^1)=\pi(w^1)$ for some $u^1\in\clos\{\phi^\sigma(u), \sigma\geq \sigma_*\}$ and $w^1\in\clos(\gamma(w))$, then $u^1=w^1.$ Hence 
\begin{equation}\label{firstextinj}
\pi\circ\phi^\sigma (u)\not\in \pi \clos(\gamma(w)) \ \ \text{for all } \sigma\geq \sigma_*.
\end{equation}
\end{lemma}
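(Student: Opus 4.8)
\textbf{Proof plan for Lemma \ref{lemmak1}.}

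The plan is to exploit the fact that $W$ is a non-increasing step function along orbits of the product flow (Lemma \ref{decreasing lemma}), together with the finiteness of the value $k(w)$ obtained in Lemma \ref{lemma k}, and the separation hypothesis $\alpha(w)\cap\omega(w)=\varnothing$. First I would observe that, by Lemma \ref{lemma k}, $W(w^1,w^2)=k(w)$ for all distinct $w^1,w^2\in\clos(\gamma(w))$; in particular for any distinct pair of translates $\phi^{s_1}(w),\phi^{s_2}(w)$. The key claim to establish is that there is $\sigma_*\geq 0$ such that the pair $\bigl(\phi^\sigma(u),\phi^{s}(w)\bigr)$ never lies in $\Sigma$ for $\sigma\geq\sigma_*$ and $s\in\R$, and that the (then continuous, $\Z$-valued) function $W$ evaluated on such pairs equals $k(w)$. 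Given that claim, \eqref{kappavzero} follows by continuity of $W$ (Axiom (A1)) by passing to the closure $\clos\{\phi^\sigma(u):\sigma\geq\sigma_*\}\times\clos(\gamma(w))$, since $\Sigma$ is closed and the set of such pairs avoiding $\Sigma$ is closed in the complement of $\Delta$; the ``in particular'' statement is then immediate from Axiom (A3) (if $\pi(u^1)=\pi(w^1)$ then $(u^1,w^1)\in\Sigma$, forcing $u^1=w^1$), and \eqref{firstextinj} is just a restatement, since $\pi\circ\phi^\sigma(u)\in\pi\clos(\gamma(w))$ would give some $w^1\in\clos(\gamma(w))$ with $\pi(\phi^\sigma(u))=\pi(w^1)$, whence $\phi^\sigma(u)=w^1\in\clos(\gamma(w))$, contradicting that (for $\sigma$ large) $\phi^\sigma(u)$ accumulates only on $\omega(u)\supseteq\gamma(w)$ — actually one argues $\phi^\sigma(u)\in\clos(\gamma(w))$ would make $\gamma(w)$ and hence $\omega(u)$ meet $\alpha(w)$... — so I would instead derive \eqref{firstextinj} purely from $\pi(u^1)=\pi(w^1)\Rightarrow u^1=w^1$ combined with the observation that $\phi^\sigma(u)\notin\clos(\gamma(w))$ for $\sigma\geq\sigma_*$, which itself needs $\alpha(w)\cap\omega(w)=\varnothing$.

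For the central claim I would argue by contradiction against the monotonicity of $W$. Suppose no such $\sigma_*$ works; then there are $\sigma_n\to\infty$ and $s_n\in\R$ with $W\bigl(\phi^{\sigma_n}(u),\phi^{s_n}(w)\bigr)\neq k(w)$ (or the pair is in $\Sigma$, which by Axiom (A4)–(A5) forces a strict drop at a nearby $\sigma$, so in either case $W$ takes a value $\neq k(w)$ on a nearby non-singular pair). Using that $w\in\omega(u)$, I can compare: for each fixed translate $\phi^{s}(w)$ and each small $\tau$, the pair $\bigl(\phi^{\sigma_n+\tau}(u),\phi^{s+\tau}(w)\bigr)$ converges — along a suitable subsequence exploiting $w\in\omega(u)$ and $\phi^{s+\tau}(w)\in\omega(u)$ — to a pair of points in $\clos(\gamma(w))\times\clos(\gamma(w))$, on which $W=k(w)$. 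This pins $W$ to equal $k(w)$ on the relevant translates for $\sigma$ large, contradicting the assumption that it is not. The mechanism is exactly the one used in the proof of Lemma \ref{lem:not-in-sigma}: a drop of $W$ that recurs infinitely often along $\sigma_n\to\infty$ forces $W\bigl(\phi^{\sigma+s-s'}(u),\phi^\sigma(u)\bigr)\to-\infty$, while continuity of $W$ at a limit point in $\clos(\gamma(w))\times\clos(\gamma(w))$ bounds it below — contradiction. I would organize this so that the separation hypothesis $\alpha(w)\cap\omega(w)=\varnothing$ enters to guarantee that the relevant limit pairs are genuinely off the diagonal (so that $W$ is defined and Lemma \ref{lemma k} applies), which is where one uses that $w$ cannot be ``doubly asymptotic to itself''.

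The main obstacle I anticipate is bookkeeping the two independent time-shifts $\sigma$ (along the $u$-orbit) and $s$ (along the $\gamma(w)$-orbit) simultaneously, and ensuring the subsequence extraction producing convergence $\bigl(\phi^{\sigma_n}(u),\phi^{s_n}(w)\bigr)\to(w^1,w^2)$ really lands with $w^1\neq w^2$ so that Lemma \ref{lemma k} is applicable; this is precisely where $\alpha(w)\cap\omega(w)=\varnothing$ must be used, to rule out the degenerate situation in which the limit collapses onto the diagonal or in which $u^1\in\clos(\gamma(w))$ for arbitrarily large $\sigma$. A secondary technical point is handling the case $s_n\to\pm\infty$: then $\phi^{s_n}(w)$ accumulates on $\omega(w)$ or $\alpha(w)$, both contained in $\clos(\gamma(w))$, so the limit pair still lies in $\clos(\gamma(w))^2$ and Lemma \ref{lemma k} still governs it — but one must check the limit is off-diagonal, again using the disjointness of $\alpha(w)$ and $\omega(w)$ from each other (and noting a point of $\omega(w)$ and a point of $\alpha(w)$ are automatically distinct). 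Once these case distinctions are set up cleanly, the rest is a routine application of the monotonicity-of-$W$ argument already rehearsed in Lemmas \ref{decreasing lemma} and \ref{lem:not-in-sigma}.
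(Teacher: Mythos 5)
There is a genuine gap in the central step. Your contradiction argument takes the hypothesized bad pairs $\bigl(\phi^{\sigma_n}(u),\phi^{s_n}(w)\bigr)$ and claims that, along a subsequence, they converge to a pair in $\clos(\gamma(w))\times\clos(\gamma(w))$, where Lemma \ref{lemma k} pins the value to $k(w)$. But the first component only accumulates on $\omega(u)$, which in general strictly contains $\clos(\gamma(w))$: the times $\sigma_n$ are dictated by the contradiction hypothesis, and $w\in\omega(u)$ only provides \emph{some} sequence of times along which $\phi^{\sigma}(u)$ approaches $\clos(\gamma(w))$; it does not allow you to extract such a subsequence of the \emph{given} $\sigma_n$. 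Likewise the fallback mechanism you cite from Lemma \ref{lem:not-in-sigma} (infinitely many drops forcing $W\to-\infty$) does not transfer: there both entries are time-translates of the single orbit of $u$, so all drops accumulate along one product orbit, whereas here the pairs $(\phi^{\sigma_n}(u),w_n)$ with varying $w_n\in\clos(\gamma(w))$ do not lie on one product orbit (and $w_n$ is not an equilibrium, compare Lemma \ref{lemma:new-not-in-sigma}), so no divergence of $W$ can be produced. What is actually needed, and what the paper does, is a one-step monotonicity comparison: assuming $k_n=W(\phi^{\sigma_n}(u),w_n)\neq k(w)$, flow the pair to an auxiliary time $\tilde\sigma_n>\sigma_n$ at which $\phi^{\tilde\sigma_n}(u)$ is close to a point $w^1$ chosen in $\omega(w)$ or in $\alpha(w)$ according to whether the $w_n$ lie in $\alpha(w)$ or in $\gamma(w)\cup\omega(w)$, while $\phi^{\tilde\sigma_n-\sigma_n}(w_n)$ converges to some $w^2\in\clos(\gamma(w))$; the hypothesis $\alpha(w)\cap\omega(w)=\varnothing$ enters exactly here to guarantee $w^1\neq w^2$, so Lemma \ref{lemma k} and continuity give the value $k(w)$ at time $\tilde\sigma_n$, contradicting $k_n<k(w)$ by the non-increase of $W$ along the product orbit (Lemma \ref{decreasing lemma}); the case $k_n>k(w)$ is handled symmetrically with the roles of $\alpha(w)$ and $\omega(w)$ exchanged. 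Your sketch names the right tools but never sets up this comparison, nor the case split on the sign of $k_n-k(w)$, which is essential because monotonicity works only in one direction.

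Two further points. The reduction to $u^1$ on the orbit of $u$ followed by a closure argument is legitimate, but not for the reason you give: the complement of $\Sigma$ is open, not closed, so a limit pair could a priori be singular; one excludes this because a pair in $\Sigma\setminus\Delta$ would force a strict drop of $W$ across it (Axioms (\ref{thingeneral})--(\ref{Wdropping})), contradicting that $W\equiv k(w)$ on both sides. Also, you should not restrict the second component to $\gamma(w)$ only: the contradiction sequence must be allowed to have $w_n$ anywhere in $\clos(\gamma(w))$, since the case analysis depends on whether the $w_n$ sit in $\alpha(w)$ or in $\gamma(w)\cup\omega(w)$. Finally, \eqref{firstextinj} is not automatic from the injectivity statement: you still must rule out $\phi^{\sigma}(u)\in\clos(\gamma(w))$ for $\sigma\geq\sigma_*$, which you flag but leave unproved. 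The missing piece is the paper's three-case argument: if $\phi^{\sigma_1}(u)$ lay in $\omega(w)$, $\alpha(w)$ or $\gamma(w)$, invariance would give $\alpha(w)\subseteq\omega(u)\subseteq\omega(w)$, respectively $\omega(w)\subseteq\omega(u)\subseteq\alpha(w)$, respectively $\alpha(w)\subseteq\omega(u)=\omega(w)$, each contradicting $\alpha(w)\cap\omega(w)=\varnothing$.
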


\begin{proof}[Proof (cf.\ \cite{fiedlermallet}, Lemma 3.2).]
We start by observing that it is enough to prove that (\ref{kappavzero}) holds for $u^1\in\phi^\sigma(u), \sigma\ge \sigma_*.$ Then by continuity of $W,$ the statement follows  for all $u^1\in\clos\{\phi^\sigma(u),\sigma\ge \sigma_*\}.$ 

Suppose there exist sequences $\sigma_n\to \infty,{w}_n\in \clos(\gamma(w)),$ with
$$
\phi^{\sigma_n}(u)\not={w}_n, \quad k_n:=W(\phi^{\sigma_n}(u),{w}_n)\not=k(w).
$$
We may assume, passing to a subsequence if necessary, that for all $n$ we have   either $k_n>k(w)$ or $k_n<k(w).$ We will split the proof in two cases.

{\it Case 1}: $k_n<k(w).$
Again passing to a subsequence if necessary, we may assume that either ${w}_n\in\alpha(w)$ for all $n$ or else ${w}_n\in\clos(\gamma(w))\setminus\alpha(w)$ for all $n.$ Since $\alpha(w)$ and $\omega(w)$ are disjoint by assumption, it follows that $\clos(\gamma(w))\setminus\alpha(w)=\gamma(w)\cup\omega(w).$ 
Choose now $w^1\in \omega(w)$ in case ${w}_n\in \alpha(w),$ and  $w^1\in\alpha(w)$ in case ${w}_n\in\gamma(w)\cup\omega(w).$ In both cases we have $w^1\in\omega(u),$ hence we can choose a sequence $\tilde{\sigma}_n$ with $\tilde{\sigma}_n>\sigma_n,$ for every $n$ such that
$$
w^1:=\lim_{n\to\infty}\phi^{\tilde{\sigma}_n}(u).
$$
In case ${w}_n\in\gamma(w)\cup\omega(w)$ we may assume that $\tilde{\sigma}_n-\sigma_{n}$ is so large that $\phi^{\tilde{\sigma}_n-\sigma_{n}}({w}_n)\in\clos\{\phi^\sigma(w), \sigma>0\}.$ For a further subsequence, we have convergence of $\phi^{\tilde{\sigma}_n-\sigma_n}({w}_n).$ Define
$$
w^2:=\lim_{n\to\infty}\phi^{\tilde{\sigma}_n-\sigma_n}({w}_n).
$$
Note that $w^1,w^2\in\clos(\gamma(w)),$ and $w^1\not=w^2$ since $\alpha(w)\cap\omega(w)=\varnothing.$ In fact, by construction it follows that either $w^1\in\omega(w)$ and $w^2\in\alpha(w),$ or else $w^1\in\alpha(w)$ and $w^2\in\clos\{\gamma(w),\sigma\geq 0\}=\{\phi^\sigma(w), \sigma\geq0\}\cup\omega(w).$
By Lemma \ref{lemma k} there exists $k(w)\in \Z$ such that
$$
W(w^1,w^2)=k(w).
$$
For $n$ large enough the continuity of $W$ implies
\begin{eqnarray}
k_n<k(w)&=&W(w^1,w^2)=W(\phi^{\tilde{\sigma}_n}(u),\phi^{\tilde{\sigma}_n-\sigma_n}({w}_n))\nonumber\\
 &=&W(\phi^{{\sigma}_n+(\tilde{\sigma}_n-\sigma_n)}(u),\phi^{\tilde{\sigma}_n-\sigma_n}({w}^n))\nonumber\\
 &\leq& W(\phi^{{\sigma}_n}(u),{w}_n)=k_n,\nonumber
\end{eqnarray}
which is a contradiction. 

The final assertion  (\ref{firstextinj}) follows from the following observation. Suppose, by contradiction, that there exist a 
$u^1=\phi^{\sigma_1}(u),$ for some $\sigma_1\ge\sigma_*$
 and $w^1\in\clos(\gamma(w)),$ such that $\pi(u^1)=\pi(w^1).$ By what we have just proved, we then have $u^1=w^1.$ 
 Since $w^1\in \clos(\gamma(w))$ and, by assumption, the sets $\alpha(w),$ $\gamma(w)$ and $\omega(w)$ are disjoint, 
 there are only three different possibilities.
\begin{enumerate}
\item $w^1\in\omega(w).$ Then $\phi^{\sigma_1}(u)\in\omega(w).$ By invariance $\omega(u)\subseteq\omega(\omega(w))=\omega(w).$ Since $\alpha(w)\subseteq\omega(u)\subseteq\omega(w),$ this contradicts $\alpha(w)\cap\omega(w)=\varnothing.$
\item $w^1\in\alpha(w).$ Then $\phi^{\sigma_1}(u)\in\alpha(w).$ By invariance $\omega(u)\subseteq\omega(\alpha(w))=\alpha(w).$ Since $\omega(w)\subseteq\omega(u)\subseteq\alpha(w),$ this contradicts $\alpha(w)\cap\omega(w)=\varnothing.$
\item $w^1\in\gamma(w).$ Then $\phi^{\sigma_1}(u)\in\gamma(w).$ By invariance $\omega(u)=\omega(w).$ But $\alpha(w)\subseteq\omega(u)=\omega(w),$ again contradicting $\alpha(w)\cap\omega(w)=\varnothing.$
\end{enumerate}

{\it Case 2}: $k_n>k(w).$ 
This case is analogous to the previous one. It is enough to exchange the roles of $\alpha(w)$ and $\omega(w).$ See \cite[Lemma 3.2]{fiedlermallet}
for further details.
\end{proof}

\begin{remark}\label{r:flowboxextends}
{\em
Lemma \ref{lemmak1} implies that the commutative diagram (\ref{diagramprojflow}) extends from $\clos(\gamma(w))$ to $\clos(\gamma(w)\cup\{\phi^{\sigma}(u),\sigma\ge\sigma_*\}),$ if $\alpha(w)\cap\omega(w)=\varnothing.$ Additionally, by Remark \ref{rmk:restrictionFBT},
the assertions of Lemma \ref{sectionsnoneq}
hold for every $x\in\sV$ (defined in (\ref{sV})) that is not an equilibrium.
}
\end{remark}

\begin{lemma}\label{lemma3.3mp}
Let $u\in X$ and let $\gamma_1$ and $\gamma_2$ be (not necessarily distinct) stationary or periodic orbits in $\omega(u)$. Then, there exists a $k=k(\gamma_1,\gamma_2), k\in\Z$, such that
\begin{equation}\label{3.2}
W(p^1,p^2)=k,
\end{equation}
for every $p^j\in\gamma_j, p^1\not= p^2.$ In particular, the projections of disjoint periodic orbits are disjoint.

 \begin{proof}[Proof (cf.\ \cite{fiedlermallet}, Lemma 3.3).]
We consider the case where $\gamma_1$ and $\gamma_2$ are both periodic, the others are analogous or even simpler. We first claim that $W(p^1,p^2)$ is defined for every $p^1\in\gamma^1$ and every $p^2\in\gamma^2$ with $p^1\not=p^2.$ Suppose, by contradiction, that there exist $p^1\in \gamma^1$ and $p^2\in \gamma^2$ with $p^1\not=p^2$ such that $(p^1,p^2)\in \Sigma\setminus\Delta.$ Then, by Axiom (\ref{thingeneral}) and (\ref{Wdropping}) there exists an $\eps_0>0,$ such that $(\phi^{\sigma}(p^1),\phi^{\sigma}(p^2))\not\in\Sigma$ for every $\sigma\in (-\eps_0,\eps_0)\setminus\{0\}$ and
\begin{equation}\label{ww}
W(\phi^{\sigma'}(p^1),\phi^{\sigma'}(p^2))< W(\phi^{\sigma}(p^1),\phi^{\sigma}(p^2)),
\end{equation}
for $\sigma'\in(0,\eps_0)$ and $\sigma\in(-\eps_0,0).$ Set $\sigma'=\tfrac{\eps_0}{2}$ and $\sigma=-\tfrac{\eps_0}{2}.$
By continuity of $W$ there exists an $\eta\in(0,\tfrac{\eps_0}{2})$ such that $W$ is constant on the set
$$
\mathcal{U}=\left\{(\phi^{\sigma_1}(p^1),\phi^{\sigma_2}(p^2))\ |\  -\tfrac{\eps_0}{2}-\eta<\sigma_1,\sigma_2<\tfrac{\eps_0}{2}+\eta\right\}.
$$
By periodicity of $\gamma^1$ and $\gamma^2$ there is a $\sigma_3>\eps_0$ such that $(\phi^{\sigma_3}(p^1),\phi^{\sigma_3}(p^2))\in\mathcal{U}$ (both in the periodic and the quasi-periodic case). Now, by (\ref{ww})
$$
W(\phi^{\eps_0/2}(p^1),\phi^{\eps_0/2}(p^2))<W(\phi^{-\eps_0/2}(p^1),\phi^{-\eps_0/2}(p^2))=W(\phi^{\sigma_3}(p^1),\phi^{\sigma_3}(p^2)).
$$
Since $\sigma_3>\tfrac{\eps_0}{2},$ this contradicts Lemma \ref{decreasing lemma}. Hence $(p^1,p^2)\not\in\Sigma$ and $W(p^1,p^2)$ is well defined for every $p^1\in\gamma^1$ and every $p^2\in\gamma^2,$ with $p^1\not=p^2.$

This implies, by continuity of $W$, that the map
$$
(p^1,p^2)\to W(p^1,p^2)
$$ 
is locally constant on
$$
\{(p^1,p^2)\in\gamma_1\times\gamma_2\ |\ p^1\not=p^2\}.
$$
This set is connected, which proves (\ref{3.2}). 
\end{proof}
\end{lemma}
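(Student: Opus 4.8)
The plan is to reduce everything to two facts: that $W(p^1,p^2)$ is actually \emph{defined} (i.e.\ $(p^1,p^2)\notin\Sigma$) for all admissible pairs, and that the set $\{(p^1,p^2)\in\gamma_1\times\gamma_2 : p^1\neq p^2\}$ is connected; granting these, Axiom (A1) makes $W$ a continuous $\Z$-valued, hence locally constant, function on a connected set, so it is constant and the required $k=k(\gamma_1,\gamma_2)$ exists. I would organise the argument by the nature of $\gamma_1,\gamma_2$. If $\gamma_1=\gamma_2$ is a single equilibrium there is nothing to prove (no pair with $p^1\neq p^2$). If $\gamma_1,\gamma_2$ are equilibria $e_1\neq e_2$, well-definedness is immediate: were $(e_1,e_2)\in\Sigma\setminus\Delta$, Axiom (A4) would give small $\sigma\neq0$ with $(\phi^\sigma(e_1),\phi^\sigma(e_2))\notin\Sigma$, but $(\phi^\sigma(e_1),\phi^\sigma(e_2))=(e_1,e_2)\in\Sigma$, a contradiction. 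The substantive case is when at least one of $\gamma_1,\gamma_2$ is genuinely periodic, which I treat next.

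So assume (say) both $\gamma_1,\gamma_2$ are periodic (mixed periodic/stationary is the same argument with one rotation number zero), and suppose for contradiction that $(p^1,p^2)\in\Sigma\setminus\Delta$ for some $p^j\in\gamma_j$. By Axioms (A4) and (A5) there is $\eps_0>0$ with $(\phi^\sigma(p^1),\phi^\sigma(p^2))\notin\Sigma$ for $\sigma\in(-\eps_0,\eps_0)\setminus\{0\}$ and with $W(\phi^{\sigma'}(p^1),\phi^{\sigma'}(p^2))<W(\phi^{\sigma}(p^1),\phi^{\sigma}(p^2))$ for $\sigma\in(-\eps_0,0)$, $\sigma'\in(0,\eps_0)$. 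Take $\sigma'=\eps_0/2$, $\sigma=-\eps_0/2$, and by continuity of $W$ (Axiom (A1)) choose a neighbourhood $\mathcal{U}$ of $(\phi^{-\eps_0/2}(p^1),\phi^{-\eps_0/2}(p^2))$ of the form $\{(\phi^{\sigma_1}(p^1),\phi^{\sigma_2}(p^2)): |\sigma_i+\eps_0/2|<\eta,\ i=1,2\}$ on which $W$ is constant. The key dynamical input is \emph{recurrence} of the product orbit $\sigma\mapsto(\phi^\sigma(p^1),\phi^\sigma(p^2))$ on the torus $\gamma_1\times\gamma_2$: the orbit is periodic when the rotation numbers are rationally related and dense otherwise, so in either case there is $\sigma_3>\eps_0$ with $(\phi^{\sigma_3}(p^1),\phi^{\sigma_3}(p^2))\in\mathcal{U}$. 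Then $W$ at time $\sigma_3$ equals $W$ at time $-\eps_0/2$, which strictly exceeds $W$ at time $\eps_0/2$; but $\sigma_3>\eps_0/2$ and $\sigma\mapsto W(\phi^\sigma(p^1),\phi^\sigma(p^2))$ is non-increasing by Lemma~\ref{decreasing lemma}, a contradiction. Hence $(p^1,p^2)\notin\Sigma$ and $W$ is defined on all admissible pairs.

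With $W$ defined, continuous and integer-valued (Axiom (A1)), it is locally constant on $\{(p^1,p^2)\in\gamma_1\times\gamma_2 : p^1\neq p^2\}$. If $\gamma_1\cap\gamma_2=\varnothing$ this set is $\gamma_1\times\gamma_2$, a product of circles and/or points, hence connected; if $\gamma_1=\gamma_2=\gamma$ is periodic it is $(\gamma\times\gamma)\setminus\Delta\cong\mathbb{T}^2\setminus S^1$, connected because the diagonal represents the primitive homology class and so is non-separating (this is exactly Lemma~\ref{lemma k}(i)). A locally constant function on a connected set is constant, giving (\ref{3.2}). For the last assertion, if $\gamma_1,\gamma_2$ are disjoint periodic orbits then every pair in $\gamma_1\times\gamma_2$ is admissible, so $(\gamma_1\times\gamma_2)\cap\Sigma=\varnothing$; if $\pi(p^1)=\pi(p^2)$ for some $p^j\in\gamma_j$, then Axiom (A3) forces $(p^1,p^2)\in\Sigma$, a contradiction, so $\pi(\gamma_1)\cap\pi(\gamma_2)=\varnothing$.

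The main obstacle is the well-definedness step, and within it the production of the recurrence time $\sigma_3$ simultaneously for both orbits. This is painless when the periods are rationally related but in the quasi-periodic case needs the density of the product flow line on $\gamma_1\times\gamma_2$; one also has to be mildly careful to ensure $\sigma_3$ can be taken larger than $\eps_0/2$ so that the monotonicity of $W$ from Lemma~\ref{decreasing lemma} is genuinely contradicted rather than merely consistent.
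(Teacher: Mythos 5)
Your proof is correct and follows essentially the same route as the paper: the same recurrence argument on $\gamma_1\times\gamma_2$ (choosing $\sigma'=\eps_0/2$, $\sigma=-\eps_0/2$, a neighbourhood $\mathcal{U}$ where $W$ is constant, and a return time $\sigma_3>\eps_0$ contradicting the monotonicity of Lemma \ref{decreasing lemma}) to rule out $(p^1,p^2)\in\Sigma\setminus\Delta$, followed by local constancy of $W$ on the connected set of admissible pairs. Your explicit treatment of the equilibrium cases and of the final assertion via Axiom (\ref{piandsigmageneral}) only fills in details the paper leaves as ``analogous or even simpler.''
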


\begin{lemma}\label{lemma:new-not-in-sigma}
Let $u\in X$ and $ e \in E.$  For every $w\in\omega(u)$ with $w\not=e$ it holds $(w,e)\not\in \Sigma.$ If, furthermore, $e\not=\omega(u)$ then there exists a $\bar{\sigma}\in\R$ such that the map $\sigma\mapsto W(\phi^{\sigma}(u),e)$ is constant for $\sigma>\bar{\sigma}.$
\begin{proof}
The arguments in this proof resemble those in the proof of Lemma \ref{lem:not-in-sigma}. We repeat the argument. Let $w\in \omega(u).$ Since $w\not=e,$  we can assume that $(w,e)\not\in\Delta.$
Suppose, by contradiction, that $(w,e)\in\Sigma\setminus\Delta,$ then by Axioms (\ref{thingeneral}) and (\ref{Wdropping}), there exists an $\eps_0>0$ such that $(\phi^{\sigma}(w),e)\not\in\Sigma,$ for all $\sigma\in(-\eps_0,\eps_0)\setminus\{0\}$ and
$$
W(\phi^{\sigma}(w),e)> W(\phi^{\sigma'}(w),e),
$$
for all $\sigma\in(-\eps_0,0)$ and all $\sigma'\in (0,\eps_0).$ Set $\sigma=-\eps$ and $\sigma'=\eps,$ with $0<\eps<\eps_0.$ Then we have
\begin{equation}\label{dec2}
W(\phi^{-\eps}(w),e)> W(\phi^{\eps}(w),e).
\end{equation}
 By definition of the $\omega$-limit set and the invariance $\omega$, there exists a sequence $\sigma_n\to\infty,$ as $n\to\infty$ such that
\begin{equation}\label{con}
\phi^{\sigma_n\pm\eps}(u)\to \phi^{\pm\eps}(w).
\end{equation}
Since $\sigma_n$ is divergent we assume that
\begin{equation}\label{eqn:choice3}
\sigma_{n+1}>\sigma_n+2\eps, \quad \text{for all } n\in \N.
\end{equation}
Inequality (\ref{dec2}), convergence in (\ref{con}) and Lemma \ref{decreasing lemma} imply, for $\sigma_n\to\infty,$ that
$$
\begin{array}{ccl}
W(\phi^{\sigma_n+\eps}(u),e)&=& W(\phi^{+\eps}(w),e)\nonumber\\
&<& W(\phi^{-\eps}(w),e)\nonumber\\
&=& W(\phi^{\sigma_n-\eps}(u),e).
\end{array}
$$
Combining the latter with (\ref{eqn:choice3}) and the fact that $W$ is non-increasing, we obtain
$$
W(\phi^{\sigma_{n+1}-\eps}(u),e)<W(\phi^{\sigma_{n}-\eps}(u),e),
$$
for all $n.$ From this, we deduce that $\sigma\mapsto W(\phi^{\sigma}(u),e)$ has infinitely many jumps and therefore
$$
W(\phi^{\sigma}(u),e)\to-\infty \quad \text{as}\quad \sigma\to\infty.
$$
On the other hand, continuity of $W$ and (\ref{con}) imply, for $\sigma_n\to\infty,$ that
$$
W(\phi^{\sigma_n+\eps}(u),e)=W(\phi^{\eps}(w),e)>-\infty,
$$
which is a contradiction.

To prove the final assertion, suppose, by contradiction, that such a $\bar{\sigma}$ does not exist. Then there exists a sequence $\sigma_n\to\infty$ such that $(\phi^{\sigma_n}(u),e)\in\Sigma.$ Now choose a $w\in\omega(u)\setminus\{e\}\not=\varnothing.$ There exists a sequence $\tilde{\sigma}_n\to\infty$ such that $\phi^{\tilde{\sigma}_n}(u)\to w.$ By the first part of the lemma, $W(w,e)\in\Z.$ We may choose $\tilde{\sigma}_n>\sigma_n$ without loss of generality. By continuity of $W$ and axiom (\ref{Wdropping}) it follows that
$$
W(w,e)=\lim_{n\to\infty}W(\phi^{\tilde{\sigma}_n}(u),e)=-\infty,
$$
a contradiction. 
 \end{proof}
\end{lemma}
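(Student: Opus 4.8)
The plan is to run, essentially verbatim, the contradiction argument of Lemma~\ref{lem:not-in-sigma}, exploiting one simplification: since $e\in E$ is an equilibrium we have $\phi^\sigma(e)=e$ for all $\sigma$, so only the first coordinate $\sigma\mapsto\phi^\sigma(u)$ moves.

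\emph{First assertion.} Suppose, towards a contradiction, that $w\in\omega(u)$, $w\neq e$, and $(w,e)\in\Sigma$; then $(w,e)\in\Sigma\setminus\Delta$. Axioms (\ref{thingeneral}) and (\ref{Wdropping}) give $\eps_0>0$ with $(\phi^\sigma(w),e)\notin\Sigma$ for $0<|\sigma|<\eps_0$ and $W(\phi^{-\eps}(w),e)>W(\phi^{\eps}(w),e)$ for $0<\eps<\eps_0$; fix such an $\eps$. Since $w\in\omega(u)$, choose $\sigma_n\to\infty$ with $\phi^{\sigma_n}(u)\to w$, so $\phi^{\sigma_n\pm\eps}(u)\to\phi^{\pm\eps}(w)$ by continuity of the flow; passing to a subsequence assume $\sigma_{n+1}>\sigma_n+2\eps$. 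As $\Sigma$ is closed and $(\phi^{\pm\eps}(w),e)\notin\Sigma$, for large $n$ the points $(\phi^{\sigma_n\pm\eps}(u),e)$ lie outside $\Sigma$, and continuity together with $\Z$-valuedness (Axiom (\ref{continuityofW})) and Lemma~\ref{decreasing lemma} give $W(\phi^{\sigma_n\pm\eps}(u),e)=W(\phi^{\pm\eps}(w),e)$. Monotonicity of $\sigma\mapsto W(\phi^\sigma(u),e)$ (Lemma~\ref{decreasing lemma}) and $\sigma_{n+1}-\eps>\sigma_n+\eps$ then yield
$$
W(\phi^{\sigma_{n+1}-\eps}(u),e)\le W(\phi^{\sigma_n+\eps}(u),e)<W(\phi^{\sigma_n-\eps}(u),e),
$$
so $n\mapsto W(\phi^{\sigma_n-\eps}(u),e)$ is a strictly decreasing integer sequence; hence $W(\phi^\sigma(u),e)\to-\infty$, contradicting that $W(\phi^{\sigma_n+\eps}(u),e)=W(\phi^{\eps}(w),e)$ is a fixed finite integer.

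\emph{Second assertion.} By Lemma~\ref{decreasing lemma} the map $\sigma\mapsto W(\phi^\sigma(u),e)$ is non-increasing and $\Z$-valued where defined, so it is either eventually constant --- whence we take $\bar\sigma$ beyond the last jump --- or tends to $-\infty$. To exclude the latter, use $\omega(u)\neq\{e\}$ and $\omega(u)\neq\varnothing$ to pick $w\in\omega(u)\setminus\{e\}$. By the first assertion $(w,e)\notin\Sigma$, so $W(w,e)\in\Z$; taking $\tilde\sigma_n\to\infty$ with $\phi^{\tilde\sigma_n}(u)\to w$ and using closedness of $\Sigma$ and continuity of $W$, we get $W(\phi^{\tilde\sigma_n}(u),e)\to W(w,e)$, contradicting $W(\phi^\sigma(u),e)\to-\infty$. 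Thus $W(\phi^\sigma(u),e)$ is bounded below, hence eventually constant, giving $\bar\sigma$.

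The work is routine; the points requiring care are (a) checking that $W$ is actually defined at all of $(\phi^{\sigma_n\pm\eps}(u),e)$, which is exactly where closedness of $\Sigma$ and the convergence $\phi^{\sigma_n\pm\eps}(u)\to\phi^{\pm\eps}(w)$ enter; (b) the spacing $\sigma_{n+1}>\sigma_n+2\eps$ that turns a single local drop into an unbounded descent; and (c) the triviality that a non-increasing $\Z$-valued function with infinitely many jumps diverges to $-\infty$. I do not foresee a substantive obstacle: this is a transcription of the Lemma~\ref{lem:not-in-sigma} argument to a slightly simpler setting, and the only delicate bookkeeping is keeping straight the subsequence along which $W$ stays pinned at a finite value versus the full orbit along which it descends.
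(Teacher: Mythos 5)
Your proof is correct and follows essentially the same route as the paper's: the first assertion is the Lemma~\ref{lem:not-in-sigma} contradiction argument specialized to an equilibrium in the second slot (using Axioms (\ref{thingeneral})--(\ref{Wdropping}), the spacing $\sigma_{n+1}>\sigma_n+2\eps$, and the monotonicity from Lemma~\ref{decreasing lemma}), and the second assertion is the paper's argument recast as the dichotomy ``eventually constant or $W\to-\infty$,'' excluded via a point $w\in\omega(u)\setminus\{e\}$ and continuity of $W$. The care you flag about $W$ being defined at $(\phi^{\sigma_n\pm\eps}(u),e)$ via closedness of $\Sigma$ is exactly the point the paper relies on as well.
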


\begin{lemma}\label{3.4}
Let $u$ be in $X.$  
There exists an integer $k_0\in\Z$ such that
\begin{equation}\label{wweko}
W(w,e)=k_0
\end{equation}
for every $w\in\omega(u),$ and for every equilibrium $e\in \omega(u)$ such that $w\not=e.$ 
\begin{proof}
Fix $e\in E\cap \omega(u).$ Let $w\in\omega(u)\setminus \{e\}.$
According to Lemma \ref{lemma:new-not-in-sigma},  
$W(w,e)$ is well-defined. Since $\phi^{\sigma_n}(u)\to w$ for some $\sigma_n\to\infty,$
\begin{eqnarray}
W(w,e)&=&\lim_{n\to\infty} W(\phi^{\sigma_n}(u),e)\nonumber\\
             &=&\lim_{\sigma\to\infty} W(\phi^{\sigma}(u),e)=k_e,\nonumber
\end{eqnarray}
where the second limit exists by Lemma \ref{lemma:new-not-in-sigma}.
Since the above statement holds for any $w\in\omega(u)\setminus\{e\},$ this implies that $W(w,e)$ is independent of $w\in\omega(u)\setminus\{e\}.$

We still need to show that $W(w,e)$ is independent of $e\in E\cap\omega(u).$
Therefore let $e,\tilde{e}\in E\cap \omega(u), e\not=\tilde{e}.$ Then, by Axiom (\ref{continuityofW}), by the fact that $e,\tilde{e}\in \omega(u),$ and by Lemma \ref{lemma:new-not-in-sigma} it holds that
$$
k_e=W(w,e)  
=W(\tilde{e},{e}) 
        =  W(e,\tilde{e}) = 
         W(w,\tilde{e})=k_{\tilde{e}}.
$$
This shows (\ref{wweko}) and concludes the proof.
\end{proof}
\end{lemma}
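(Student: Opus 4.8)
The plan is to split the argument into two independence claims, first fixing $e$ and varying $w$, then varying $e$; both exploit Lemma~\ref{lemma:new-not-in-sigma} to guarantee $W(w,e)$ is well-defined and eventually constant along the $u$-orbit. First I would fix an equilibrium $e \in E \cap \omega(u)$ and take an arbitrary $w \in \omega(u) \setminus \{e\}$. By Lemma~\ref{lemma:new-not-in-sigma}, $(w,e) \notin \Sigma$, so $W(w,e) \in \Z$ is defined; moreover since $e \neq \omega(u)$ is ruled out only when $\omega(u) = \{e\}$ (a trivial case where there is no such $w$), that same lemma gives a $\bar\sigma$ with $\sigma \mapsto W(\phi^\sigma(u),e)$ constant for $\sigma > \bar\sigma$, say equal to $k_e$. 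Choosing $\sigma_n \to \infty$ with $\phi^{\sigma_n}(u) \to w$ and using continuity of $W$ (Axiom (A1)) together with the fact that $(w,e) \notin \Sigma$ (so $W$ is continuous at $(w,e)$), I get $W(w,e) = \lim_n W(\phi^{\sigma_n}(u),e) = k_e$. Since $k_e$ does not depend on the choice of $w$, this already shows $W(w,e)$ is independent of $w \in \omega(u) \setminus \{e\}$.

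Next I would show independence of $e$. Take $e, \tilde e \in E \cap \omega(u)$ with $e \neq \tilde e$. Since $\tilde e \in \omega(u)$ and $\tilde e \neq e$, the first part applies with $w = \tilde e$ to give $W(\tilde e, e) = k_e$. By symmetry of $W$ (Axiom (A1)), $W(\tilde e, e) = W(e, \tilde e)$, and applying the first part again with the roles of $e$ and $\tilde e$ interchanged (now with $w = e \in \omega(u) \setminus \{\tilde e\}$) gives $W(e, \tilde e) = k_{\tilde e}$. Chaining these, $k_e = W(\tilde e, e) = W(e, \tilde e) = k_{\tilde e}$, so all the $k_e$ coincide; call the common value $k_0$. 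Then $W(w,e) = k_0$ for every $w \in \omega(u)$ and every $e \in E \cap \omega(u)$ with $w \neq e$, which is the assertion.

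There is essentially no serious obstacle here — the statement is a clean corollary of Lemma~\ref{lemma:new-not-in-sigma} plus symmetry. The one point requiring a little care is the degenerate case $\omega(u) = \{e\}$ (a single equilibrium), for which the second alternative of Lemma~\ref{lemma:new-not-in-sigma} does not directly apply; but in that case there is no pair $(w,e)$ with $w \neq e$ in $\omega(u)$, so the statement is vacuously true and one may take $k_0$ arbitrary. The other mild subtlety is making sure, when varying $e$, that the hypothesis $e \neq \omega(u)$ needed for the "eventually constant" conclusion is met: if $\omega(u)$ contains two distinct equilibria $e \neq \tilde e$ then certainly $\omega(u) \neq \{e\}$ and $\omega(u) \neq \{\tilde e\}$, so Lemma~\ref{lemma:new-not-in-sigma} applies to both, and the argument closes.
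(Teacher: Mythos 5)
Your proposal is correct and follows essentially the same route as the paper: Lemma \ref{lemma:new-not-in-sigma} gives well-definedness of $W(w,e)$ and eventual constancy of $\sigma\mapsto W(\phi^\sigma(u),e)$, continuity of $W$ off $\Sigma$ identifies $W(w,e)$ with that limit $k_e$ independently of $w$, and symmetry of $W$ applied to a pair of equilibria $e,\tilde e\in\omega(u)$ chains $k_e=W(\tilde e,e)=W(e,\tilde e)=k_{\tilde e}$, exactly as in the paper. Your explicit treatment of the degenerate case $\omega(u)=\{e\}$ (where the claim is vacuous) is a harmless addition the paper leaves implicit.
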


\subsection{Proof of the strong version}\label{sectionstv}
In this subsection we prove Propositions \ref{pr2} and \ref{pr3}, which completes the proof of Theorem \ref{pb}.  Theorem \ref{teorema2} 
follows as a consequence of Proposition \ref{th2fmp}.

\begin{proof}[Proof of Proposition \ref{pr2}]
	Let $u \in X$ and $w \in \omega(u)$.	
Suppose, by contradiction, that there is a non-equilibrium $w^*\in\omega(w)$ and that $\gamma(w)$ is not periodic. Lemma \ref{injectivityofpi1} implies  that $\pi\circ\phi^\sigma$ is a planar flow on the set $\omega(w)\subseteq\clos(\gamma(w)).$ By Corollary \ref{eqtoeqgeneral} the point $\pi (w^*)$ is not an equilibrium for $\pi\circ\phi^\sigma.$
According to Lemma \ref{sectionsnoneq} there exist a section $\sC$ through $\pi(w^*)$.
Consider first $\pi\circ\phi^\sigma(w)$ and recall that by Lemma \ref{injectivityofpi1} the map $\sigma\to\pi\circ\phi^\sigma(w) $ is one-to-one since   $\gamma(w)$ is not periodic. Let $\sigma_n\to\infty$ denote 
those positive times for which $\pi\circ\phi^{\sigma_n}(w)\in \sC$. Note that $\{\pi\circ\phi^{\sigma_n}(w)\}_{n=1}^{\infty}$ are all distinct and for
 $n$ sufficiently large $y_1=\pi\circ\phi^{\sigma_n}(w)$ and $y_2 = \pi\circ\phi^{\sigma_{n+1}}(w)$ both lie in $\sC_0$. 
Denote $\wsigma = \sigma_{n+1}-\sigma_n$, so that $y_2=\psi^{\wsigma}(y_1)$.
We apply the construction of Remark~\ref{r:J0}(i) to these $y_1$ and $y_2$.
In addition to $\sJ_0$ and $\sJ_\pm$ we obtain three more Jordan curves (the first two are the same as in the proof of Proposition~\ref{softversion})
\begin{alignat*}{1}
	\sG_- &= \{  \psi^\sigma (y_1) : \sigma^-_1 \leq \sigma \leq \wsigma + \sigma^-_2 \} \cup \ell_- \\
	\sG_+ &= \{  \psi^\sigma (y_1) : \sigma^+_1 \leq \sigma \leq \wsigma + \sigma^+_2 \} \cup \ell_+ \\
	\sG_0 &= \{  \psi^\sigma (y_1) : \sigma^0_1 \leq \sigma \leq \wsigma + \sigma^0_2 \} \cup \ell_0 .
\end{alignat*}
These three curves separate $\R^2$ into two open sets, say $A_{j}^1$ and $A_{j}^2$, with $j\in \{-,0,+\}$. To fix notation,
we require that $J_0 \subset A_{+}^1$ and $J_0 \subset A_{-}^2$ and $J_+ \subset A_+^2$, see Figure~\ref{f:strongversion}.
In particular, this implies that $A_+^2 \subset A_0^2 \subset A_-^2$  and  $A_+^2 \cap A_-^1 =\varnothing$,
as well as  $A_+^2 \cap A_0^1 = \varnothing $ and $A_-^1 \cap A_0^2 =  \varnothing$.
It follows from the properties of $J_0$ and $J_\pm$ described in Remark~\ref{r:J0}(ii)
and invariance of 
$\{  \psi^\sigma (y_1) : \sigma^-_1 \leq \sigma \leq \wsigma + \sigma^+_2 \} $
that in \emph{forward} time once a flow line is in $A_+^2$ it can never enter $A_0^1$,
while  in \emph{backward} time once a flow line is in $A_-^1$ it can never enter $A_0^2$.
We note that Remark~\ref{r:J0}(iii) implies that 
$x_1=\pi\circ\phi^{\sigma_{n}-\delta} (w) $ lies in $A_-^1$,
while $x_2=\pi\circ\phi^{\sigma_{n+1}+\delta} (w) $ lies in $A_+^2$.
Therefore,
$\pi\omega(w) \subset A_0^2$, while $\pi\alpha(w) \subset A_0^1$, cf.\   Figure~\ref{f:strongversion}.
Hence $ \pi\omega(w) \cap \pi\alpha(w) = \varnothing$.
We infer from Lemma \ref{injectivityofpi1} that $\alpha(w)\cap\omega(w)=\varnothing$.

\begin{figure}[t]
\centerline{\includegraphics[width=9cm]{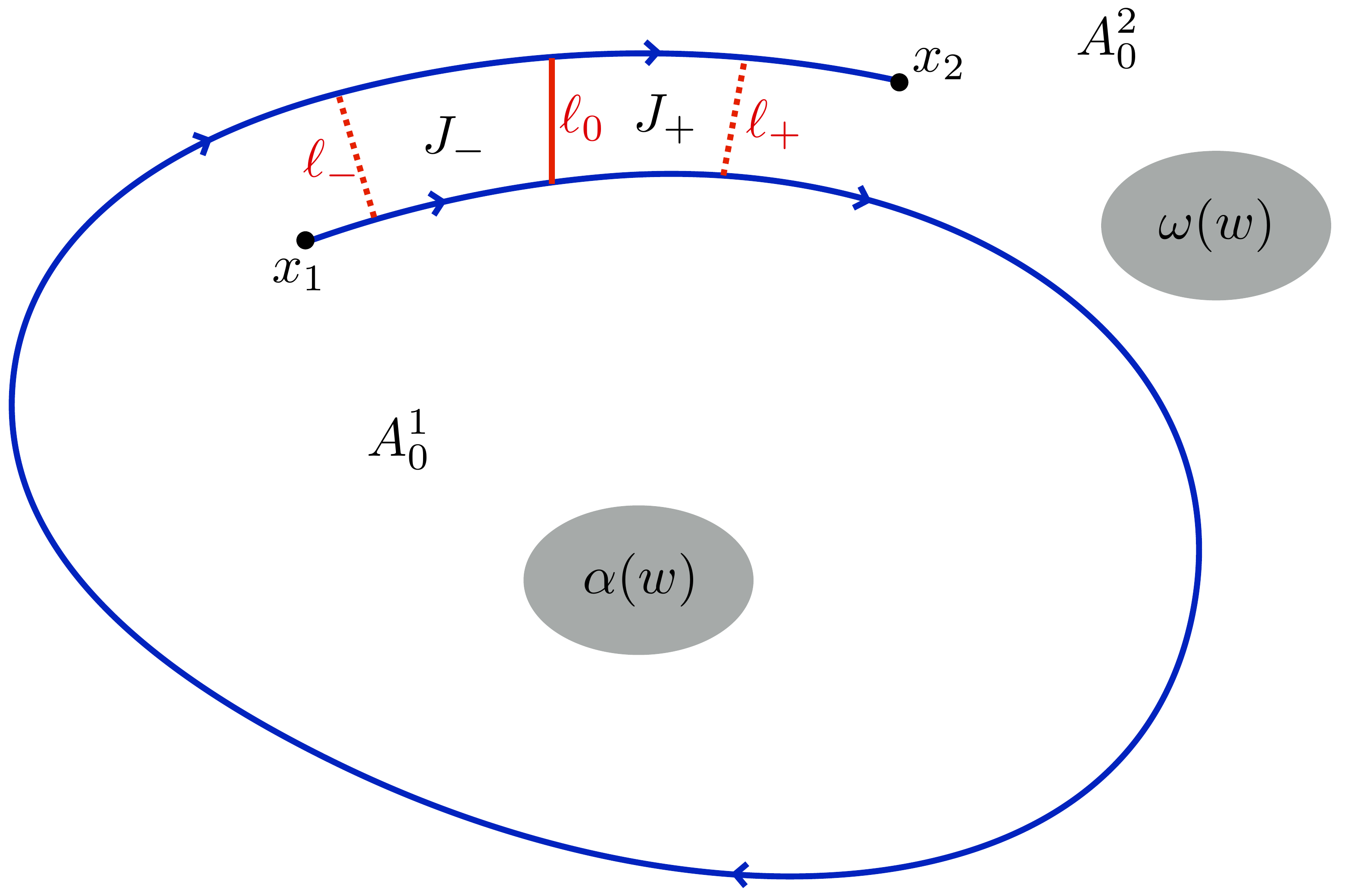}}
\caption{Sketch of the construction of $A_\pm^0$. Note that $J_+=A_-^2 \cap A_0^2$ and $J_-=A_+^1 \cap A_0^1$.}
\label{f:strongversion}
\end{figure}

Next we consider the orbit of $u$.
The assumptions of Lemma~\ref{lemmak1} are  satisfied and hence there exists a time $\sigma_*$, such that the curve $\{ \pi\circ \phi^\sigma(u) : \sigma\ge \sigma_* \}$ cannot cross 
the curve $\pi\circ\phi^\sigma (w)$. Furthermore, it follows from Remarks~\ref{rmk:restrictionFBT} and~\ref{r:flowboxextends} and the above construction, that 
once the flow line $\pi\circ\phi^{\sigma}(u)$ is in $A_+^2$ it can never enter $A_0^1$ (in forward time).
Moreover, by Remark~\ref{r:J0}(ii), once a flow line is in $A_0^2$ then it must enter $A_+^2$ in forward time, after which it can no longer enter $A_0^1$.  
On the other hand, since both $\omega(w)$ and $\alpha(w)$ are contained in $\omega(u)$, the forward orbit 
$\pi\circ \phi^\sigma(u)$ will have $\omega$-limit points when $\sigma\to
\infty$ in both $\pi\alpha(w) \subset A_0^1$ and $\pi\omega(w) \subset A_0^2$. 
This is a contradiction.
\end{proof}

\begin{proof}[Proof of Proposition \ref{pr3} (cf.\ 
 \cite{fiedlermallet}, Proposition 2).]
Suppose that $\omega(u)$ contains  a periodic orbit $\gamma(p)$ as a strict subset. Let $V\subseteq X$ be a closed tubular neighborhood of $\gamma(p).$ Choose $V$ small enough such that it does not contain equilibria and such that $\omega(u)$ still has elements outside $V.$ Since there are accumulation points (for $\phi^\sigma(u)$ when $\sigma$ goes to infinity) both inside and outside $V,$ then $\phi^\sigma(u)$ must enter and leave $V$ infinitely often. Let $\sigma_n\to\infty$ be a sequence such that 
$$
p=\lim_{n\to\infty}\phi^{\sigma_n}(u)
$$ 
and such that $\phi^\sigma(u)$ leaves $V$ between any two consecutive times $\sigma_n.$ Let $I_n:=[\sigma_n-\alpha_n,\sigma_n+\beta_n]$ be the maximal time interval containing $\sigma_n$ such that
$$
\phi^\sigma(u)\in V \ \ \ \text{for all }\sigma\in I_n.
$$
Since $\partial V$ is closed, we may assume convergence (passing to a subsequence, if necessary) of $\phi^{\sigma_n-\alpha_n}(u).$ Note that $\sigma_{n-1}<\sigma_{n}-\alpha_{n}$ thus $\sigma_n-\alpha_n\to\infty.$
Let
$$
q:=\lim_{n\to\infty}\phi^{\sigma_n-\alpha_n}(u)\in\omega(u),
$$
and  $q\in\partial V.$ Moreover we may assume that $\alpha_n+\beta_n\to\infty$ (at least for a subsequence) since $\omega(u)$ contains a periodic orbit in the interior of $V.$ We have thus
$$
\omega(q)\subseteq \clos(\phi^\sigma(q))\subseteq V,\ \  \sigma>0.
$$
From Proposition \ref{pr2} we conclude that $\gamma(q)$ is periodic. By construction $\gamma(q)$ and $\gamma(p)$ are distinct and $\gamma(q)$ is contained in $V.$  By continuity of the flow and the projection $\pi$ and by the compactness of $V$, the sets $\pi\gamma(p)$ and $\pi\gamma(q)$ are close in the Hausdorff metric (of compact subsets of $\R^2$)
provided that we take the tubular neighborhood $V$ sufficiently small. From this it follows that
$\pi \gamma(q)$ and $\pi\gamma(p)$ are nested closed curves.
Reducing $V$ to separate $\gamma(p)$ from $\gamma(q),$ a periodic solution $\gamma(r)$ can be constructed in the same way. Note once more that $\pi \gamma(q), \pi\gamma(p)$ and $\pi\gamma(r)$ are nested closed curves.
Applying Lemma \ref{lemma3.3mp} to the trajectories $\gamma(p)$ and $\gamma(q)$ we conclude
that there exists a $k\in \Z$ such that
$$
W(p^1,q^1)=k,
$$
for all $p^1\in\gamma(p)$ and $q^1\in\gamma(q).$ By continuity of $W$ (Axiom (\ref{continuityofW})) this implies that
$$
W(p^1,\phi^{\sigma_n-\alpha_n}(u))=k,
$$ 
for all $p^1\in\gamma(p)$ when $n$ is big enough, since $\phi^{\sigma_n-\alpha_n}(u)\to q\in\gamma(q).$
By Assumption (\ref{Wdropping}) we get $\pi\circ\phi^\sigma(u)\not\in\pi\gamma(p)$ for every $\sigma$ in the open interval with endpoints $\sigma_n-\alpha_n, \sigma_m-\alpha_m,$ provided $n,m$ are chosen large enough. Since $\sigma_m-\alpha_m\to\infty,$ as $m\to\infty,$ it follows that $\pi\circ\phi^{\sigma}(u)\not\in\pi\gamma(p),$ for any $\sigma$ large enough.
In an analogous manner we can prove that, for $\sigma$ large enough, the curve $\pi\circ\phi^\sigma(u) $ can never intersect $\pi\gamma(q)$ and $\pi{\gamma}(r),$ but this is a contradiction since $\pi\circ\phi^\sigma(u)$ has $\omega$-limit points as $\sigma\to\infty$ in the three nested curves $\pi{\gamma}(p), \pi{\gamma}(q),\pi{\gamma}(r).$
\end{proof} 

\begin{proposition}\label{th2fmp}
Let $u\in X$. Then,
$$
\pi :\omega(u)\to\pi(\omega(u))
$$
is a homeomorphism onto its image. Hence $\pi\circ\phi^\sigma$ is a flow on $\pi(\omega(u)).$

\begin{proof}[Proof (cf.\ \cite{fiedlermallet}, Theorem 2).]
By Axiom (\ref{Wdropping}) it is enough to show that there exists a $k_0\in \Z$ such that
\begin{equation}\label{6.1}
W(w^1,w^2)=k_0,
\end{equation}
for all $w^1,w^2\in \omega(u), w^1\not=w^2.$ 
We now apply Theorem \ref{pb} (Poincar\'{e}-Bendixson). If $\omega(u)$ consists of a single periodic orbit, then (\ref{6.1}) holds by Lemma \ref{lemma3.3mp}. We may therefore assume for the remainder of the proof that for every $w\in \omega(u)$ we have $\alpha(w),\omega(w)\subseteq E.$
If either $w^1$ or $w^2$ is an equilibrium then (\ref{6.1}) holds with $k_0$ defined in Lemma \ref{3.4}. We may therefore assume that $w^1\not\in E.$ 
Suppose now, by contradiction, that there exist $(w^1,w^2) \in \Sigma\setminus \Delta.$ By Axioms (\ref{thingeneral}) and (\ref{Wdropping}) there exists an $\eps_0>0$ such that 
  $\bigl(\phi^{\sigma}(w^1),\phi^{\sigma}(w^2)\bigr)\not\in\Sigma$,  
for all $\sigma \in (-\eps_0,\eps_0)\setminus \{0\}$   
  and
  $$
W(\phi^{\sigma'}(w^1),\phi^{\sigma'}(w^2))<W(\phi^{\sigma}(w^1),\phi^{\sigma}(w^2)),
$$ 
for all $\sigma \in (-\eps_0,0)$ and all $\sigma' \in (0,\eps_0)$.
Set $\sigma = -\eps$ and $\sigma' = \eps$, with $0<\eps <\eps_0$.
Since $w^1\in\omega(u),$ 
 there exists $\sigma_n\to\infty$ such that
$$
w^1=\lim_{n\to\infty}\phi^{\sigma_n}(u),
$$
and
$$
0<\sigma_{n+1}-\sigma_n\to\infty, \ \  \text{as}\ \ n \to\infty.
$$
Define $\hat \sigma_{n}:=(\sigma_{n+1}-\sigma_n)\to\infty$. Then, passing to a subsequence if necessary, the limits
$$
e:=\lim_{n\to\infty}\phi^{-\hat\sigma_n}(\phi^{-\eps}(w^2)) \quad \text{and} \quad \tilde{e}:=\lim_{n\to\infty}\phi^{\hat\sigma_n}(\phi^{\eps}(w^2)) 
$$
exist, and $e,\tilde{e}\in E,$ since $\alpha(w^2)\subseteq E$ and $\omega(w^2)\subseteq E.$ 
By Axiom (\ref{continuityofW}), Lemma \ref{decreasing lemma},  Lemma \ref{3.4} and the fact that $w^1\not\in E$ 
we infer that, for $n$ sufficiently large (slightly shifting $\eps$ if necessary to make $W$ well-defined for all relevant pairs)
$$
\begin{array}{lll}
W(\phi^{\eps}(w^1),\phi^{\eps}(w^2))&<& W(\phi^{-\eps}(w^1),\phi^{-\eps}(w^2))\\
&=&W(\phi^{\sigma_{n+1}-\eps}(u),\phi^{-\eps}(w^2))\\
&\le& W(\phi^{\sigma_{n+1}-\hat{\sigma}_n-\eps}(u),\phi^{-\hat{\sigma}_n-\eps}(w^2))\\
&=&W(\phi^{\sigma_n-\eps}(u),e)\\
&=& W(\phi^{-\eps}(w^1),e)\\
&=& W(\phi^{-\eps}(w^1),\tilde{e})\\
&=& W(\phi^{\eps}(w^1),\tilde{e})\\
&=& W(\phi^{\sigma_{n+1}+\eps}(u),\tilde{e})\\
&=&W(\phi^{\hat{\sigma}_n+\sigma_n+\eps}(u),\phi^{\hat{\sigma}_n+\eps}(w^2))\\
&\le&W(\phi^{\sigma_n+\eps}(u),\phi^{\eps}(w^2))\\
&=&W(\phi^{\eps}(w^1),\phi^{\eps}(w^2)),
\end{array}
$$
which is a contradiction. In the sixth and in the seventh equality we used Lemma \ref{3.4}.
\end{proof}
\end{proposition}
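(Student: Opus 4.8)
The plan is to exploit that $\omega(u)$ is compact and $\pi(\omega(u))\subset\R^2$ is Hausdorff, so that — exactly as in the proof of Lemma \ref{injectivityofpi1} — a continuous bijection is automatically a homeomorphism. Continuity of $\pi$ is Axiom (\ref{Wandpi}), and surjectivity onto the image is trivial, so the whole statement reduces to showing that $\pi$ is injective on $\omega(u)$; the concluding ``hence'' then follows verbatim from the construction in diagram (\ref{diagramprojflow}), conjugating $\phi^\sigma$ by the homeomorphism $\pi$. To get injectivity I would argue as follows. Suppose $w^1,w^2\in\omega(u)$ with $w^1\ne w^2$ and $\pi(w^1)=\pi(w^2)$. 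By Axiom (\ref{piandsigmageneral}) then $(w^1,w^2)\in\Sigma\setminus\Delta$, so Axioms (\ref{thingeneral}) and (\ref{Wdropping}) provide an $\eps_0>0$ with $(\phi^{\pm\eps}(w^1),\phi^{\pm\eps}(w^2))\notin\Sigma$ and with the strict drop $W(\phi^{\eps}(w^1),\phi^{\eps}(w^2))<W(\phi^{-\eps}(w^1),\phi^{-\eps}(w^2))$ for $0<\eps<\eps_0$. Since $\omega(u)$ and $\Delta$ are invariant, both pairs $(\phi^{\pm\eps}(w^1),\phi^{\pm\eps}(w^2))$ lie in $(\omega(u)\times\omega(u))\setminus\Delta$. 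Hence it suffices to exhibit a \emph{single} integer $k_0$ with $W(v^1,v^2)=k_0$ for all $v^1,v^2\in\omega(u)$, $v^1\ne v^2$: the strict drop then contradicts this constancy, so no such singular pair exists and $\pi$ is injective.

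To produce $k_0$ I would split along the Poincar\'e--Bendixson dichotomy of Theorem \ref{pb}. If $\omega(u)$ is a single periodic orbit, then Lemma \ref{lemma3.3mp} applied with $\gamma_1=\gamma_2=\omega(u)$ gives the constant directly. Otherwise $\alpha(w)\subseteq E$ and $\omega(w)\subseteq E$ for every $w\in\omega(u)$. In that regime, if at least one of $v^1,v^2$ lies in $E$, then $W(v^1,v^2)$ is well-defined by Lemma \ref{lemma:new-not-in-sigma} and equals the universal constant of Lemma \ref{3.4} (note that if $v^j=e\in E$ then $e\in E\cap\omega(u)$, and $\phi^\sigma$ does not move the other point into $E$). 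The remaining case $v^1,v^2\in\omega(u)\setminus E$ is the heart of the matter and where I expect the main difficulty.

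For that case I would argue by contradiction, assuming $(v^1,v^2)\in\Sigma\setminus\Delta$, writing $w^1=v^1$, $w^2=v^2$, and taking $\eps_0$ as above. Choose $\sigma_n\to\infty$ with $\phi^{\sigma_n}(u)\to w^1$ and, after passing to a subsequence, $\hat\sigma_n:=\sigma_{n+1}-\sigma_n\to\infty$. Because $\alpha(w^2)\subseteq E$ and $\omega(w^2)\subseteq E$, after a further subsequence the limits $e:=\lim_n\phi^{-\hat\sigma_n}(\phi^{-\eps}(w^2))$ and $\tilde e:=\lim_n\phi^{\hat\sigma_n}(\phi^{\eps}(w^2))$ exist and belong to $E\cap\omega(u)$. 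Now I would chain: the strict drop $W(\phi^{\eps}(w^1),\phi^{\eps}(w^2))<W(\phi^{-\eps}(w^1),\phi^{-\eps}(w^2))$; then $\phi^{-\eps}(w^1)=\lim_n\phi^{\sigma_{n+1}-\eps}(u)$, and translating the pair $(\phi^{\sigma_{n+1}-\eps}(u),\phi^{-\eps}(w^2))$ backward by $\hat\sigma_n$ while using monotonicity of $\sigma\mapsto W$ along the product flow (Lemma \ref{decreasing lemma}) gives, in the limit, $W(\phi^{-\eps}(w^1),\phi^{-\eps}(w^2))\le W(\phi^{-\eps}(w^1),e)$; Lemma \ref{3.4} identifies both $W(\phi^{-\eps}(w^1),e)$ and $W(\phi^{\eps}(w^1),\tilde e)$ with the same $k_0$; finally $\phi^{\eps}(w^1)=\lim_n\phi^{\sigma_{n+1}+\eps}(u)$, and translating $(\phi^{\sigma_{n+1}+\eps}(u),\phi^{\hat\sigma_n+\eps}(w^2))$ backward by $\hat\sigma_n$ together with monotonicity yields $W(\phi^{\eps}(w^1),\tilde e)\le W(\phi^{\eps}(w^1),\phi^{\eps}(w^2))$. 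Concatenating these gives $W(\phi^{\eps}(w^1),\phi^{\eps}(w^2))<W(\phi^{\eps}(w^1),\phi^{\eps}(w^2))$, a contradiction.

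The main obstacle is the bookkeeping in this final chain: one must ensure every pair that appears is outside $\Sigma$ (which may force a slight perturbation of $\eps$, legitimate because by Lemma \ref{decreasing lemma} the set of ``bad'' times is discrete), that each monotonicity inequality is applied in the correct time order, and — crucially for the use of Lemma \ref{3.4} — that $e$ and $\tilde e$ genuinely lie in $E\cap\omega(u)$, which holds because $\phi^{-\hat\sigma_n}(\phi^{-\eps}(w^2))$ and $\phi^{\hat\sigma_n}(\phi^{\eps}(w^2))$ sit on the orbit of $w^2\in\omega(u)$, whose $\alpha$- and $\omega$-limit sets are subsets of $E\cap\omega(u)$. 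Once injectivity is in hand, $\pi$ is a homeomorphism onto $\pi(\omega(u))$, and $\pi\circ\phi^\sigma\circ\pi^{-1}$ is the asserted continuous flow on $\pi(\omega(u))$.
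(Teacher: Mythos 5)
Your proposal is correct and follows essentially the same route as the paper: the reduction to showing that no off-diagonal pair in $\omega(u)\times\omega(u)$ is singular (via Axioms (\ref{piandsigmageneral})--(\ref{Wdropping})), the case split along Theorem \ref{pb} using Lemmas \ref{lemma3.3mp} and \ref{3.4}, and the final contradiction chain with $\hat\sigma_n$, $e$, $\tilde e$, Lemma \ref{decreasing lemma} and a slight shift of $\eps$ is exactly the paper's argument, only written more compactly.
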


Since the Cauchy-Riemann Equations satisfy the Axioms (A1)-(A5) Theorem \ref{teorema2} follows from Proposition \ref{th2fmp}.

\section{Proofs of Propositions \ref{Xcompact} and \ref{prop:aron}} 
\label{proofoflemmas}
Consider the operators
$$
\partial=\partial_s-J\partial_t\ \  \text{and}\ \ \overline{\partial}=\partial_s+J\partial_t,
$$
and recall the following regularity estimates:
\begin{lemma}\label{lemmaabb}
Let $g$ be a function in $\in C^{\infty}_c(\R\times S^1;\R^2).$ For every $1<p<\infty$, there exists a constant $C_p>0$, such that
\begin{equation}\label{abb}
\Vert \nabla g\Vert_ {L^p(\R\times S^1)}\leq C_p \Vert \bar{\partial}g\Vert_ {L^p(\R\times S^1)}.
\end{equation} 
The same estimate holds for $\partial$ via $t\mapsto -t$.
\end{lemma}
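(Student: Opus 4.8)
The plan is to deduce the estimate from the Calder\'on--Zygmund theory for the Laplacian by means of the factorisation $\partial\,\bar\partial=\bar\partial\,\partial=\Delta$, where $\Delta:=\partial_s^2+\partial_t^2$ acts componentwise. This identity is immediate: $J$ is a constant matrix, so it commutes with $\partial_s$ and $\partial_t$; the mixed partials commute on smooth functions; and $J^2=-\Id$; hence $(\partial_s-J\partial_t)(\partial_s+J\partial_t)=\partial_s^2-J^2\partial_t^2=\Delta$. Thus, for $g\in C^\infty_c(\R\times S^1;\R^2)$, setting $f:=\bar\partial g\in C^\infty_c(\R\times S^1;\R^2)$ we have $\Delta g=\partial f$, so it suffices to bound $\|\partial_s g\|_{L^p}$ and $\|\partial_t g\|_{L^p}$ by $\|f\|_{L^p}$. (For $p=2$ this is in fact the identity $\|\bar\partial g\|_{L^2}=\|\nabla g\|_{L^2}$, obtained by integration by parts and the skew-symmetry of $J$; the content of the lemma is the case $p\neq2$.)

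I would carry this out on the Fourier side: apply the Fourier transform in $s\in\R$ and Fourier series in $t\in S^1$, and write $(\xi,n)\in\R\times\Z$ for the dual variable and $\zeta:=(\xi,2\pi n)\in\R^2$. Under these transforms $\Delta$ becomes multiplication by $-|\zeta|^2$, while $\partial$ and $\bar\partial$ become multiplication by $2\times2$ matrices depending linearly on $\zeta$. Hence $\Delta g=\partial f$ gives, for $\zeta\neq0$, $\widehat g=-|\zeta|^{-2}(\text{linear in }\zeta)\,\widehat f$, and therefore
$$
\widehat{\partial_\ell g}=m_\ell(\xi,n)\,\widehat f,\qquad \ell\in\{s,t\},
$$
where each $m_\ell$ is a $2\times2$ matrix whose entries equal $|\zeta|^{-2}$ times a quadratic polynomial in $\zeta$; in particular $m_\ell$ is bounded, homogeneous of degree $0$, and smooth away from $\zeta=0$. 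Since $\iint_{\R\times S^1}\bar\partial g\,ds\,dt=0$ (integrate the terms $\partial_s g$ and $J\partial_t g$ separately, using compact support in $s$ and periodicity in $t$), $\widehat f$ vanishes at $\zeta=0$, so the singularity of $m_\ell$ at the origin is harmless and $\partial_\ell g=T_{m_\ell}f$, where $T_{m_\ell}$ is the Fourier multiplier operator with symbol $m_\ell$. Each $m_\ell$ satisfies the Mikhlin--H\"ormander bounds $|\partial^\alpha_\zeta m_\ell(\zeta)|\le C_\alpha|\zeta|^{-|\alpha|}$, so by the Mikhlin multiplier theorem on the product group $\R\times S^1$ --- which follows from the Euclidean case by transference (de Leeuw's restriction theorem, restricting an $\R^2$-multiplier to $\R\times\Z$) --- the operators $T_{m_\ell}$ are bounded on $L^p(\R\times S^1)$ for all $1<p<\infty$. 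Adding the resulting bounds for $\partial_s g$ and $\partial_t g$ yields (\ref{abb}). The estimate for $\partial$ then follows from that for $\bar\partial$ via the change of variables $(s,t)\mapsto(s,-t)$, an isometry of $\R\times S^1$ that preserves $L^p$-norms and $|\nabla g|$ and sends $\partial_t$ to $-\partial_t$, hence $\partial=\partial_s-J\partial_t$ to $\partial_s+J\partial_t=\bar\partial$.

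Equivalently, one may argue in physical space, writing $\partial_\ell g=K_\ell*f$ on $\R\times S^1$ with $K_\ell$ the kernel of $\partial_\ell\partial\,\Delta^{-1}$, built from the Green's function $G$ of $\Delta$ on $\R\times S^1$ (taken on the orthogonal complement of the $t$-independent functions; the $t$-average of $g$ is recovered from that of $\bar\partial g$ by a single integration in $s$, which is trivially $L^p$-bounded). From the Fourier series of $G$ one splits $K_\ell=K_\ell^{\mathrm{loc}}+K_\ell^{\infty}$, where $K_\ell^{\mathrm{loc}}$ is supported near the diagonal and is a Calder\'on--Zygmund kernel --- there $G$ behaves like the planar Newtonian potential $\tfrac1{2\pi}\log|\cdot|$ --- while $K_\ell^{\infty}\in L^1(\R\times S^1)$ decays exponentially away from the diagonal in $s$; the first piece is controlled by the Calder\'on--Zygmund theorem and the second by Young's inequality. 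I expect the real work to lie precisely in this analytic core --- the multiplier theorem on the mixed domain $\R\times S^1$, equivalently the Calder\'on--Zygmund estimate for $K_\ell$ on the cylinder. The only feature genuinely different from $\R^2$ or $\mathbb{T}^2$ is the non-compact $s$-direction, where the $t$-independent mode of $G$ grows linearly; but that mode contributes nothing to $\nabla g$ beyond the trivial bound already noted, so it poses no real difficulty.
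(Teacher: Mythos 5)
Your argument is correct, but note that the paper itself does not prove this lemma at all: the ``proof'' is a citation to the literature (Abbondandolo, Douglis--Nirenberg, Hofer--Zehnder, McDuff--Salamon, Appendix B), where the estimate appears as the classical Calder\'on--Zygmund inequality for the $\bar{\partial}$-operator, typically proved on $\R^2$ (or a disc) by writing $\nabla g=\nabla\,\partial\,\Delta^{-1}\bar{\partial}g$ and applying the Calder\'on--Zygmund theorem to second derivatives of the Newtonian potential; in the only place the lemma is used (the proof of Proposition~\ref{Xcompact}) it is applied to functions supported in small balls, so the planar version from those references is what is actually needed. What you propose is a self-contained variant of the same circle of ideas carried out directly on the cylinder: the factorisation $\partial\bar{\partial}=\Delta$, the degree-zero Mikhlin symbols of $\partial_\ell\,\partial\,\Delta^{-1}$, transference from $\R^2$ to $\R\times\Z$ (or, equivalently, the kernel splitting into a local Calder\'on--Zygmund piece plus an $L^1$, exponentially decaying tail), with the $t$-independent mode treated separately. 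This buys a proof on $\R\times S^1$ with no covering or cut-off step, at the price of the (standard but not free) multiplier machinery on the mixed group. Two small points to repair when writing it out: the phrase ``a single integration in $s$, which is trivially $L^p$-bounded'' is wrong as stated, since the antiderivative is not bounded on $L^p(\R)$ --- but it is also unnecessary, because for the $t$-independent mode one only needs $\partial_s$ of the $t$-average of $g$, which equals the $t$-average of $\bar{\partial}g$ and is therefore controlled directly, as you in fact observe at the end; and when invoking de Leeuw's restriction theorem you should handle the line $n=0$ separately (there the symbol reduces to a combination of the identity and the Hilbert transform in $s$), since the symbol is discontinuous at the origin, which lies on that line, while for $n\neq 0$ the frequencies stay away from the singularity.
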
	

\begin{proof}
See \cite{abbondandolo}, \cite{niremdouglis}
\cite{hoferzehnder}, \cite[appendix B]{mcduffsalamon}.
\end{proof}

\begin{proof}[Proof of Proposition \ref{Xcompact}]
For a solution $u\in X,$ we can write 
\begin{equation}\label{lincr}
\overline{\partial}u=-JF(t,u)=f(s,t),
\end{equation}
where $F,$ and therefore $f$, are uniformly bounded since for every $u\in X$ we have 
\begin{equation}\label{estimateLinfty}
\Vert u\Vert_ {L^{\infty}(\R\times S^1)}\leq 1.
\end{equation}
 Extend $f$ and $u$  via periodic extension to a function on $\R^2$ in the $t$-direction. By $L^\infty$-bound on $u$ 
 we obtain  the existence of a constant $M>0,$ such that
\begin{equation}\label{festimateLinfty}
\Vert f\Vert_ {L^\infty(\R^2)}\leq M.
\end{equation}

 Let $K,L, G$ be compact sets contained in $\R^2$ such that $K\Subset L \Subset G \subset \mathbb{R}^2,$  and let $\eps$ be positive such that $\eps<\dist(L,\partial G).$ By compactness, $L$ can be covered by finitely many open balls of radius $\eps/2:$ 
 $$
 L\subset \bigcup_{i=1}^{N_\eps} B_{\eps/2}(x_i).
 $$
Consider a partition of unity $\{\rho_{\eps,x_i}\}_{i=1,\dots, N_\eps}$ on $L$ subordinate to  $\left\{B_{\eps}(x_i)\right\}_{i=1,\dots,N_\eps} \!\! .$
In particular the supports of $\rho_{\eps,x_i}$ are contained in $ B_{\eps}(x_i),$ for every $i=1\dots N_{\eps}.$
Then, for every $u,$  every small $\eps>0$ and  every $i=1\dots N_{\eps},$ the function $v_{\eps,i}:=\rho_{\eps,x_i}u $ belongs to $ W^{k,p}_0(\R^2),$ for every $p\geq1,$ and every $k\in\N.$ 
For such functions the 
Poincar\'{e} inequality $\Vert v_{\eps,i}\Vert_{L^p(B_\eps(x_i))} \le C \Vert \nabla v_{\eps,i}\Vert_{L^p(B_\eps(x_i))}$ holds. Combining the latter
with Lemma \ref{lemmaabb} yields (with $C$ changing from line to line) 
 \begin{equation}\label{stima1}
 \begin{array}{ll}
\Vert v_{\eps,i}\Vert_ {W^{1,p}(\R^2)}&= \Vert v_{\eps,i}\Vert_ {W^{1,p}(B_\eps(x_i))} \leq C\Vert v_{\eps,i}\Vert_ {W^{1,p}_0(B_\eps(x_i))}\\
                                                                                  & \leq C\Vert \overline{\partial}v_{\eps,i}\Vert_ {L^p(B_\eps(x_i))}\\
                                                                                  & \leq C\Vert \rho_{\eps, x_i}\overline{\partial}u\Vert_ {L^p(B_\eps(x_i))}+
                                                                                                C\Vert u\overline{\partial}\rho_{\eps, x_i}\Vert_ {L^p(B_\eps(x_i))}\\                                                                       
                                                                                  & \leq C \Vert \overline{\partial}u\Vert_ {L^p(G)}+C\Vert u\Vert_ {L^p(G)}.             
 \end{array}
 \end{equation}
 As $\left\{\rho_{\eps, x_i}\right\}_{i=1,\dots, N_{\eps}}$ is a partition of unity it follows that
 \begin{equation}\label{stima2}
 \Vert u\Vert_ {W^{1,p}(L)}=\left|\left|\sum_{i=1}^{N_{\eps}}v_{\eps,i}\right|\right|_{W^{1,p}(L)}\leq
 \sum_{i=1}^{N_{\eps}}\Vert v_{\eps,i}\Vert_{W^{1,p}(B_\eps(x_i))}.
 \end{equation}
 By (\ref{stima1}) and (\ref{stima2}) we obtain
\begin{equation}\label{3.9 di wojciech}
\Vert u\Vert_ {W^{1,p}(L)}\leq C_{p,L,G}\left(\Vert \overline{\partial}u\Vert_ {L^p(G)}+\Vert u\Vert_ {L^p(G)}\right).
\end{equation}
Combining (\ref{3.9 di wojciech}) with (\ref{lincr}), (\ref{estimateLinfty}) and (\ref{festimateLinfty}) yields
\begin{equation}\label{estimateW1}
\Vert u\Vert_ {W^{1,p}(L)}\leq C_{p,L,G}\left(\Vert f\Vert_ {L^p(G)}+\Vert u\Vert_ {L^p(G)}\right)\le C_{p,L,G}^1,
\end{equation}
where the constant $C_{p,L,G}^1$ depends on $p,L,G$, but not on $u.$ 
By the compactness of the Sobolev compact embedding
$W^{1,p}(L)\hookrightarrow C^{0}(L)$, cf.\ \cite{adams},  sequences $\{u^n\}\subset X$ have convergent subsequences in 
 $C^0_{\text{loc}}(L).$ 
 Since the latter holds  for every $L\subset \R^2,$ 
 the convergence is in  $C^0_{\text{loc}}(\R^2)$ and the limit $u$ is a continuous function.
%
It remains to show that the limit $u$ solves Equation (\ref{nonlinearCR}).  
Consider  a partition of unity of $K\Subset L,$ denoted $\{\rho_{\eps, x_i}\}_{i=1,\dots N_{\eps}}$, where  
 $0<\eps<\dist(K,\partial L).$ On balls $B_{\eps}(x_i)$ we obtain
$$
\begin{array}{ll}
\Vert \rho_{\eps,x_i}u\Vert_ {W^{2,p}(B_{\eps})} &\leq  C\Vert \rho_{\eps, x_i}u\Vert_ {W^{2,p}_0(B_{\eps}(x_i))}\leq C\Vert \overline{\partial}(\rho_{\eps,x_i}u)\Vert_ {W^{1,p}(B_{\eps}(x_i))}\nonumber \\
&\leq  C\displaystyle \left(\Vert \rho_{\eps,x_i}\overline{\partial}u\Vert_ {W^{1,p}(B_{\eps}(x_i))}+\Vert u\overline{\partial}\rho_{\eps, x_i}\Vert_ {W^{1,p}(B_{\eps}(x_i))}\right)\nonumber \\
& \leq  C\left(\Vert \overline{\partial}u\Vert_ {L^\infty(L)}+\Vert \overline{\partial}u\Vert_ {W^{1,p}(L)}+\Vert u\Vert_ {L^\infty(L)}+\Vert u\Vert_ {W^{1,p}(L)}\right).\nonumber
\end{array}
$$
As in (\ref{stima2}), using (\ref{lincr}), we obtain
\begin{equation}\label{stimaW2}
\Vert u\Vert_ {W^{2,p}(K)}\leq \tilde{C}_{p,K,L,G}\left(\Vert f\Vert_ {L^\infty(L)}+\Vert f\Vert_ {W^{1,p}(L)}+\Vert u\Vert_ {L^\infty(L)}+\Vert u\Vert_ {W^{1,p}(L)}\right).
\end{equation}
To estimate the three terms $\Vert f\Vert_ {L^\infty(L)},$ $\Vert u\Vert_ {L^\infty(L)}$ and $\Vert u\Vert_ {W^{1,p}(L)}$
we use (\ref{estimateLinfty}), (\ref{festimateLinfty}),  and (\ref{estimateW1}). 
In order to control $\Vert f\Vert_ {W^{1,p}(L)}$, differentiate the smooth vector field $F$:
\begin{eqnarray}
f_s(s,t)=(F(t,u))_s= D_{t,u} X(t,u)(0,u_s)\nonumber\\
f_t(s,t)=(F(t,u))_t= D_{t,u} X(t,u)(1,u_t).\nonumber
\end{eqnarray}
Both right hand sides lie in $L^{p}(L)$ and hence  $Df=(f_s,f_t)$ is in $L^p(L).$ By  (\ref{stimaW2})
 there exists a constant ${C}_{p,K,L,G}^2$ independent of $u$ such that
$$
\Vert u\Vert_ {W^{2,p}(K)}\leq {C}_{p,K,L,G}^2.
$$ 
By taking $p>2$ the compact Sobolev embedding
$W^{2,p}(K)\hookrightarrow C^{1}({K})$
implies that  $u\in X.$ 
\end{proof}

\begin{proof}[Proof of Proposition \ref{prop:aron}]
As in the proof of Lemma \ref{injectivityofpi1} if suffices to show that $\iota$ is injective.
Suppose there exist $u_1,u_2\in X$ such that $\iota(u_1) = \iota(u_2). $ By definition of $\iota$ we have
\begin{equation}\label{defiota}
u_1(0,\cdot)  = u_2(0,\cdot).
\end{equation}
Define $v(s,t):=u_1(s,t)-u_2(s,t),$ for all $(s,t)\in \R\times S^1.$ 
By (\ref{defiota}) we have $v(0,t)=0$ for all $t\in S^1.$
By smoothness of the vector field $F$ we can write
$$
F(t,u_1)=F(t,u_2)+R(t,u_1,u_2-u_1)(u_2-u_1),
$$
where $R_1$ is a smooth function of its arguments. Upon substitution this gives 
\begin{equation}\label{crlin1}
v_{s}-Jv_{t}+A(s,t)v=0, \quad v(0,t)=0\ \ \  \text{for all}\ t\in S^1,
\end{equation}
and $A(s,t)=R(t,u_1(s,t),v(s,t))$ is (at least) continuous on $\R\times S^1.$ Evaluating (\ref{crlin1}) at $t=0$ we obtain, 
\begin{equation}\label{crlin2}
v_{s}-Jv_{t}+A(s,t)v=0, \quad v(0,0)=0
\end{equation}
Introducing complex coordinates $z:=s+it,$ (\ref{crlin2}) becomes
\begin{equation}\label{crcomplex}
\partial_{}v+A(z)v=0, \quad v(0)=0,
\end{equation}
where the operator $\partial_{}:=\partial_s-i\partial_t$ is the standard anti-holomorphic derivative. We used the identification between the complex structure $J$ in $\R^2$ and $i$ in $\mathbb{C}.$ 
Multiplying (\ref{crcomplex}) by $e^{\int_0^{z}A(\zeta)d\zeta}$ and defining
$$
w(z):=e^{\int_{0}^{z}A(\zeta)d\zeta}v(z),
$$
gives
$$
\partial_{} w=0, \quad w(0)=0.
$$
which implies that $w$ is analytic. The latter yields that either $0$ is an isolated zero for $w$, or there exists a $\delta>0,$ such that $w(z)=0,$ on $U_{\delta}:=\{z\in \mathbb{C}:|z|\leq \delta\}.$ By (\ref{crlin1}) we conclude that 0 cannot be an isolated zero for $w$, hence $w\equiv 0$ in $U_{\delta}:=\{z\in \mathbb{C}:|z|\leq \delta\}.$ Repeating these arguments we obtain that $w(s,t)=0 $ for all $(s,t)\in \R\times S^1$ and hence $v\equiv0.$ This implies $u_1=u_2,$ which concludes the proof.\end{proof}

\begin{remark}
{\em
The same proof can be carried out in case $J$ is a smooth map $\R\times S^1\to \Sp(2,\R)$ such that $J^2=-\Id$.
In this case one can prove that the equation 
$
u_s-J(s,t)(u_t-F(t,u))=0
$
can be transformed into (\ref{nonlinearCR}) using \cite[Theorem 12, Appendix A.6]{hoferzehnder}.
}
\end{remark}

\bibliographystyle{amsplain}

\bibliography{bibth}

\end{sloppypar}

\end{document}